\newcommand{\C}{\mathbb C}
\newcommand{\Z}{\mathbb Z}
\newcommand{\N}{\mathbb N}
\newcommand{\Q}{\mathbb Q}
\newcommand{\sma}{\left(\begin{array}}
\newcommand{\fma}{\end{array}\right)}
\newtheorem{lem}{Lemma}[section]
\newtheorem{defn}[lem]{Definition}
\newtheorem{co}[lem]{Corollary}
\newtheorem{thm}[lem]{Theorem}
\newtheorem{prop}[lem]{Proposition}
\newenvironment{proof}{\textbf{Proof.}}{\newline\hspace*{\fill}{$\Box$}\\}
\begin{document}
\title{Balanced groups and graphs of groups with infinite cyclic edge groups}
\author{J.\,O.\,Button}

\newcommand{\Address}{{% additional braces for segregating \footnotesize
  \bigskip
  \footnotesize

\textsc{Selwyn College, University of Cambridge,
Cambridge CB3 9DQ, UK}\par\nopagebreak
  \textit{E-mail address}: \texttt{j.o.button@dpmms.cam.ac.uk}
}}

\maketitle
\begin{abstract}
We give a necessary and sufficient condition for the fundamental group of
a finite graph of groups with infinite cyclic edge groups to be
acylindrically hyperbolic, from which it follows that a finitely
generated group splitting over $\Z$ cannot be simple. We also give a
necessary and sufficient condition (when the vertex groups are torsion
free) for the fundamental group to be balanced, where a group is said to
be balanced if $x^m$ conjugate to $x^n$ implies that $|m|=|n|$ for all
infinite order elements $x$.
\end{abstract}

\section{Introduction}
It is well known that a word hyperbolic group cannot contain a
Baumslag-Solitar subgroup $BS(m,n)$; indeed these have been called
``poison subgroups''. But whereas a group containing, say, $BS(2,3)$ is already
seen to be badly behaved because it contains a non residually finite
subgroup and so is itself non residually finite, this need not be the case
if a group contains $BS(1,1)=\Z^2$. On further reflection, the problems
seem to arise when we have subgroups $BS(m,n)$ for $|m|\neq |n|$ whereupon
we have an infinite order element $x$ with $x^m$ conjugate to $x^n$. This
phenomenon is already an obstruction in various settings, both group
theoretic and geometric. One way to think of this is that if we have a
``translation length'' function $\tau$ from a group $G$ to the reals which is
invariant on conjugacy classes and such that $\tau(g^n)=|n|\tau(g)$ then
any element $x$ with the property above must be sent to zero.

In \cite{wsox} a group was called unbalanced if there exists an infinite
order element $x$ with $x^m$ conjugate to $x^n$ but $|m|\neq |n|$. An
unbalanced group need not contain a Baumslag - Solitar subgroup in general
but there are conditions which ensure that it does. It was pointed out in this
paper that an unbalanced group cannot be subgroup separable and here we
also show that an unbalanced group cannot be a subgroup of $GL(n,\Z)$ for
any $n$. Indeed we begin in Section 2 by giving various examples of 
classes of groups that must necessarily be balanced and then in Section 3
we look at unbalanced groups. In order to do this we 
consider individual elements and say here that an infinite
order element $x$ with $x^m$ conjugate to $x^n$ but $|m|\neq |n|$ is unbalanced.
We then introduce two straightforward but useful
tools in order to determine whether a particular
infinite order element $a$ in an arbitrary group $G$
is balanced or not. The first is that of the
intersector, consisting of
all elements of $G$ that conjugate $\langle a\rangle$ to something
which intersects $\langle a\rangle$ non trivially. The point is that for
infinite cyclic subgroups this intersector is a subgroup, clearly containing
$\langle a\rangle$. The next tool is that of the modular map from
the intersector of $a$ to the non zero rationals, sending 
an element $g$ to $m/n$ if $ga^mg^{-1}=a^n$, which is easily verified to
be a well defined homomorphism and which will clearly detect 
unbalanced elements because it is equivalent to
the image of this homomorphism not lying in $\pm 1$.

Our first applications are to finite graphs of groups where all edge groups
are infinite cyclic but with little or no restriction on the vertex groups.
In Section 4 we look at when  the fundamental group of such a graph of
groups is acylindrically hyperbolic. A sufficient condition, with no 
restriction on edge or vertex groups, was given in  \cite{asos}. When all
edge groups are infinite cyclic but the vertex groups are arbitrary, we use
this theorem along with consideration of the intersectors in the vertex
groups of the inclusions of the edge subgroups to obtain necessary and
sufficient conditions for acylindrical hyperbolicity in Theorem \ref{mainac}.
An immediate application is that a finitely generated group that splits over
$\Z$ cannot be simple. 

In the rest of the paper we only deal with torsion free groups. In Section 5
we consider when the fundamental group of a
finite graph of groups is relatively hyperbolic. As this implies
acylindrical hyperbolicity, we in fact
give conditions which ensure that such a group is
not relatively hyperbolic with respect to any collection of proper subgroups.
For this we allow the vertex groups to be arbitrary torsion free groups
and the edge groups to be arbitrary non trivial subgroups of the vertex groups.
We then introduce the idea of the malnormal closure of a non trivial subgroup
of an arbitrary group and use this to obtain Corollary \ref{nonhypco} which
states that the fundamental group of a graph of groups as above is not
relatively hyperbolic if each vertex group has a cyclic subgroup whose
malnormal closure is the whole of this group. For instance this is the case
if each vertex group has an infinite soluble normal subgroup. 
 
It is not hard to see that for torsion free groups, being balanced is not
preserved in general
by taking amalgamated free products and HNN extensions, although
it is for free products. In the case where the edge groups are infinite cyclic
rather than trivial, we show in Theorem \ref{amal} using intersectors that
an amalgamated free product over $\Z$ of balanced groups
will still be balanced, and we give in Theorem \ref{hnnt} necessary and
sufficient conditions for an HNN extension over $\Z$. These are then used
in Theorem \ref{bal}
to answer the same question for the fundamental group of
a finite graph of groups with infinite cyclic edge groups and arbitrary
torsion free vertex groups. In order to do this we need to know when
two elements both lying in vertex groups have powers that are conjugate
in the fundamental group and this is dealt with in Section 7 by introducing
the idea of a conjugacy path. This can be thought of as an edge path in
the graph that records the fact that successive edge subgroups have 
conjugates in the intermediate vertex group that intersect non trivially,
without having to keep track throughout of the exact powers that occur.
This means that our necessary and sufficient condition
as to when the fundamental group of
a finite graph of groups with infinite cyclic edge groups and arbitrary
torsion free vertex groups is balanced in Theorem \ref{bal} is phrased
purely in terms of conjugacy paths that return to their starting element
and we prove that there are only finitely many of these paths that
need to be checked. One application is that for any such graph of groups
that fails this condition, the resulting fundamental group cannot be
subgroup separable or embeddable in $GL(n,\Z)$ for any $n$, no matter
how well behaved the vertex groups are.  

One would also like to say that such a graph of groups fails this
condition exactly when the fundamental group contains an unbalanced
Baumslag-Solitar subgroup, but we mentioned that some (torsion free)
unbalanced groups do not contain these as subgroups. To get round
this one could impose some reasonably wide ranging 
condition on the vertex groups,
such as being word hyperbolic or having cohomological dimension 2. We
define in Section 9 a condition on a group $G$ which is a
substantial generalisation of both of these, called the cohomological
condition, which states that any 2-generator subgroup 
$\langle a,b\rangle$ of $G$ where $\langle a\rangle$ meets $\langle b\rangle$
non trivially has cohomological dimension at most 2. We then show in
Corollary  \ref{bscoh} that if all vertex groups are torsion free, balanced
and satisfy this condition then the fundamental group is unbalanced
if and only if it contains an unbalanced Baumslag - Solitar subgroup.

Our final application in Section 10 is to the characterisation of
word hyperbolicity for a graph of groups with word hyperbolic vertex
groups and infinite cyclic edge groups. Now
the famous Bestvina - Feighn combination theorem in \cite{bf} and \cite{bf2}
gives a sufficient condition for a finite
graph of groups with word hyperbolic vertex
groups to have a word hyperbolic fundamental group and this need 
not require restriction to infinite cyclic edge groups. In the case where
they are infinite cyclic, these papers give necessary and sufficient
conditions for an amalgamated free product and HNN extension to be
word hyperbolic which can then be applied to determine whether or not the
fundamental group of a graph of groups with word hyperbolic vertex groups
and infinite cyclic edge groups has this property. However we have not
seen in the literature a characterisation of this which is given purely
in terms of information that can be directly read off from the graph
of groups without consideration of the action of the Bass - Serre tree.
(The closest equivalent to this we have come across is Appendix A in
\cite{ch} which allows one to establish whether the fundamental group
of a graph of groups with abelian edge groups has the CSA property.)
This can be done by using conjugacy paths as described above, as well as
what we call non
maximal paths which keep track of when an edge group embeds in a vertex
group as a non maximal cyclic subgroup.
Therefore it seemed
worthwhile recording in Theorem \ref{hyp} that a finite graph of groups
with torsion free word hyperbolic vertex groups and infinite cyclic
edge groups has word hyperbolic fundamental group exactly when the graph
of groups contains no complete conjugacy paths and no full non maximal paths.
Moreover there are only finitely many possibilities
for each of these two types of paths to occur.

\section{Examples of balanced groups}
The famous Baumslag-Solitar groups $BS(m,n)$ for $m,n\neq 0$ are given
by the presentation $\langle a,t|ta^mt^{-1}=a^n\rangle$ and are
HNN extensions where the base $\langle a\rangle$ and the associated
subgroups $\langle a^m\rangle$ and $\langle a^n\rangle$ are all infinite
cyclic, although note that they can also be expressed as an HNN extension
with base $\langle a,b|a^n=b^m\rangle$ and stable letter $t$ conjugating
$a$ to $b$; this latter base is not infinite cyclic unless one of
$|m|,|n|$ is equal to 1. 
Here we will divide them up into the following categories:\\
If $|m|=|n|$ then we call $BS(m,n)$ {\bf Euclidean}
(after \cite{hsws}), otherwise it is
non-Euclidean.\\
If one of $|m|$ or $|n|$ is equal to 1 then $BS(m,n)$ is {\bf soluble},
otherwise it is non-soluble.\\
Thus the Euclidean soluble Baumslag-Solitar groups are exactly $\Z^2$
and the Klein bottle group. Euclidean Baumslag-Solitar groups should be
regarded as generally very well behaved, for instance they are linear 
over $\Z$ and therefore
over $\C$ and are residually finite, they are subgroup separable (every finitely
generated subgroup is an intersection of finite index subgroups, thus again
they are residually finite) and we shall see shortly that they are
balanced, which is a definition from \cite{wsox}:
\begin{defn}
A group $G$ is called {\bf balanced} 
if for any element $x$ in $G$ of infinite order
we have that $x^m$ conjugate to $x^n$ implies that $|m|=|n|$.\\
Here we will also define:\\ 
A {\bf balanced element} in a group $G$
is an element $x$ in $G$ of infinite order
such that if we have $m,n\in\Z$
with $x^m$ conjugate to $x^n$ in $G$ then $|m|=|n|$. 
\end{defn} 
Thus a group is balanced if and only if all its elements of infinite order
are balanced.

The soluble Baumslag-Solitar groups (minus the two Euclidean ones) can in
turn be regarded as moderately well behaved: they are linear over $\C$,
indeed they embed in $SL(2,\C)$, 
thus are again residually finite, but they are not linear over $\Z$,
they are not subgroup separable - indeed this fails on the infinite cyclic 
subgroup $\langle a\rangle$ - and they are clearly not balanced. Meanwhile
the Baumslag-Solitar groups which are neither Euclidean nor soluble can
be regarded as very badly behaved indeed: they are famously not residually
finite and obviously not balanced, so any group having one of these as a
subgroup will also fail these two conditions.

We first quote some basic properties of balanced groups, then provide
a range of examples.
\begin{lem} \label{bsic} (\cite{wsox} Lemmas 4.13 and 4.14)\\
(i) If $G$ and $H$ are both balanced groups then so is
$G\times H$ and $G*H$.\\
(ii) If $G$ is a balanced group then so is any subgroup
$H$ of $G$. Conversely if $G$ contains a finite
index subgroup $H$ which is balanced then $G$ is balanced.
\end{lem}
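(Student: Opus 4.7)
The plan is to dispose of (i) and the easy direction of (ii) by direct manipulations, and to reduce the finite-index half of (ii) to the hypothesis on $H$ via an iteration trick.

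For $G\times H$, an element $(x,y)$ of infinite order has at least one coordinate of infinite order, and conjugation in the product is componentwise; so applying the balancedness of $G$ or $H$ in that coordinate gives $|m|=|n|$. For $G*H$, I would split on the Bass--Serre classification of an infinite-order element $x$. If $x$ is conjugate into a factor, a standard normal-form (or Bass--Serre tree) argument shows that two powers of a vertex-group element which are conjugate in $G*H$ are already conjugate inside the vertex group containing them, so balancedness of the factor applies. Otherwise $x$ is conjugate to a cyclically reduced word of syllable length $\ell\ge 2$; its $m$th power is cyclically reduced of syllable length $|m|\ell$, and conjugate elements share cyclic reduction length, forcing $|m|=|n|$.

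The subgroup half of (ii) is immediate, since a conjugation inside $H$ is a fortiori a conjugation inside $G$. For the converse, suppose $gx^mg^{-1}=x^n$ in $G$ with $x$ of infinite order. A straightforward induction (equivalent to the multiplicativity of the modular map of the introduction on the intersector of $x$) yields
\[
 g^s x^{m^s} g^{-s} = x^{n^s}\qquad\text{for every } s\ge 1.
\]
Since $[G:H]<\infty$, I can choose $s\ge 1$ with $g^s\in H$; and since $\langle x\rangle\cap H$ has finite index in $\langle x\rangle$, I can choose $t\ge 1$ with $x^t\in H$. Raising the iterated identity to the $t$th power and regrouping gives
\[
 g^s\,(x^t)^{m^s}\,g^{-s}=(x^t)^{n^s},
\]
an equation entirely inside $H$ in which $x^t$ has infinite order. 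Balancedness of $H$ then forces $|m|^s=|n|^s$, and hence $|m|=|n|$.

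The main obstacle is the free-product case: it rests on the standard but non-trivial facts that conjugacy of vertex-group powers in $G*H$ reduces to conjugacy in the vertex group, and that cyclic reduction length scales linearly with the exponent. Both are routine consequences of the normal form / Bass--Serre tree for $G*H$, but worth being explicit about rather than treating as evident.
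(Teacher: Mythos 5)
Your argument is correct, but note that the paper itself gives no proof of this lemma: it is quoted directly from Wise's paper (\cite{wsox}, Lemmas 4.13 and 4.14), so there is nothing internal to compare against. Taken on its own terms, everything you write checks out. For $G\times H$ the projection of the conjugation equation to a coordinate in which the element has infinite order does the job. For $G*H$ you are invoking exactly the right two facts from the conjugacy theorem for free products: a cyclically reduced element of syllable length $\ge 2$ has $m$th power cyclically reduced of length $|m|\ell$ and conjugate cyclically reduced elements have equal length, while elements of a factor that are conjugate in the free product are already conjugate in that factor; both are standard consequences of normal forms and you are right to flag them as the only non-trivial inputs. For the finite-index half of (ii), the iteration $g^s x^{m^s} g^{-s} = x^{n^s}$ is easily verified by induction (the key computation being $g x^{m^2} g^{-1} = (gx^mg^{-1})^m = x^{mn}$ and then conjugating once more), the existence of $s$ with $g^s\in H$ and $t$ with $x^t\in H$ follows from finiteness of the index by the usual pigeonhole on cosets, and $|m|^s=|n|^s$ indeed forces $|m|=|n|$. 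One could alternatively run the finite-index argument through the modular homomorphism of Section 3 (the image of $g$ under $\Delta_x$ has finite order modulo the image of $\Delta_x$ restricted to $I_H(x^t)$), but your direct iteration is cleaner and avoids any forward reference.
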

However being balanced does not hold under extensions: for instance the
soluble Baumslag-Solitar group $BS(1,n)$ for $n>1$ has a torsion
free abelian normal subgroup with infinite cyclic quotient.

All word hyperbolic groups are balanced: indeed this fact
is established on the way to showing that a word hyperbolic group cannot
contain any Baumslag-Solitar subgroup. As for variations and generalisations
on this result, it will be immediate from Lemma \ref{hypr} later in this paper
that if a group is torsion free and hyperbolic relative to a collection of
balanced subgroups then it is also balanced. Also
groups that act properly and cocompactly on a CAT(0) space are
balanced (thus the Burger-Mozes examples are torsion free simple groups
that are balanced).
A torsion free word hyperbolic group is CSA (conjugate
separated abelian), meaning that the centraliser of any non trivial
element is abelian and malnormal: CSA groups are also clearly balanced
(indeed we must have $m=n$ here). Another large class of balanced
groups is the class of 3-manifold groups: interestingly
this was shown in \cite{krop3} well before the proof of Geometrization.

Abelian groups are obviously balanced but what about
replacing abelian by nilpotent/polycyclic/soluble? Once again
$BS(1,n)$ is a soluble counterexample but otherwise this holds by
the following which is \cite{wsox} Lemma 4.12.
\begin{lem}
If $G$ is subgroup separable then it is  
balanced; indeed if $gx^mg^{-1}=x^n$ holds
in $G$ for $x$ of infinite order and
$|m|\neq |n|$ then one of the infinite cyclic subgroups
$\langle x^m\rangle$ or $\langle x^n\rangle$ 
is not separable in  $G$.
\end{lem}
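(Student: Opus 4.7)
The first assertion is a formal consequence of the ``indeed'' clause, so I will concentrate on the latter. The plan is to fix $g,x,m,n$ with $gx^mg^{-1}=x^n$, $x$ of infinite order and $|m|\neq|n|$, and then, after replacing $g$ by $g^{-1}$ if necessary to arrange $|m|<|n|$, to show that $\langle x^n\rangle$ itself is not separable in $G$. Both $m$ and $n$ are non-zero since $x$ has infinite order.

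For the contradiction I would suppose $\langle x^n\rangle$ is separable. Since $|n|>|m|\ge 1$ we have $n\nmid m$, hence $x^m\notin\langle x^n\rangle$, so separability provides a finite-index subgroup $K\le G$ with $\langle x^n\rangle\le K$ and $x^m\notin K$. Letting $\sigma$ be the permutation of the left coset space $G/K$ induced by $x$, the set $\{k\in\Z:\sigma^k(K)=K\}$ is of the form $o'\Z$ for a positive integer $o'$ (the orbit length of $K$). From $x^n\in K$ and $x^m\notin K$ I read off $o'\mid n$ but $o'\nmid m$, so there exists a prime $q$ with $v_q(o')>v_q(m)$ and, automatically, $v_q(o')\le v_q(n)$.

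To reach a contradiction I would then pass to the normal core $N=\bigcap_{h\in G}hKh^{-1}$, which is normal of finite index in $G$, and let $o$ be the order of $\bar x$ in $G/N$. Since $G/N$ embeds in $\mathrm{Sym}(G/K)$ sending $\bar x$ to $\sigma$, we have $o=\mathrm{ord}(\sigma)$, and in particular $o'\mid o$. The relation $\bar g\bar x^m\bar g^{-1}=\bar x^n$ forces $\bar x^m$ and $\bar x^n$ to be conjugate, hence of equal order, in $G/N$, that is $\gcd(o,m)=\gcd(o,n)$. But $v_q(\gcd(o,m))=v_q(m)$ since $v_q(o)\ge v_q(o')>v_q(m)$, whereas $v_q(\gcd(o,n))\ge\min(v_q(o'),v_q(n))=v_q(o')>v_q(m)$, contradicting the equality of the two greatest common divisors.

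The only real point requiring care is keeping the two finite-index subgroups distinct in the argument: $K$ (possibly not normal, used to witness the non-divisibility of $m$ by the orbit length $o'$) and its normal core $N$ (needed so that the conjugation relation in the finite quotient forces $\bar x^m$ and $\bar x^n$ to have equal order). Once the link $o'\mid o$ between the two is in place, the $q$-adic bookkeeping is routine.
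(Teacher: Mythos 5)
Your argument is correct: the two-subgroup device (a witness $K$ with $\langle x^n\rangle\le K$, $x^m\notin K$, then its normal core $N$ so that conjugacy in $G/N$ forces $\gcd(o,m)=\gcd(o,n)$, contradicting $o'\mid o$, $o'\mid n$, $o'\nmid m$) is exactly the standard proof of this fact. The paper itself gives no proof, quoting the lemma verbatim from Wise (\cite{wsox}, Lemma 4.12), and your write-up is a correct self-contained version of that argument.
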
 
As it is a result of Mal'cev that virtually polycyclic groups are
subgroup separable, we have that they are also balanced. 

Further examples can be obtained by using residual properties.
\begin{prop} \label{res}
If $G$ is a group which is residually (torsion free balanced), that
is for all non trivial $g\in G$ we have a homomorphism onto a
torsion free balanced group with $g$ not in the kernel (so $G$ is
itself torsion free), then $G$ is balanced.
\end{prop}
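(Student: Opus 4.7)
The plan is to take an alleged unbalanced element in $G$ and push the relation down to one of the quotient groups supplied by the residual hypothesis, where it will immediately contradict balance.

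Concretely, suppose for contradiction that $x \in G$ has infinite order and there exists $y \in G$ with $yx^m y^{-1} = x^n$ and $|m| \neq |n|$. Since $x^0 = 1$ has finite (in fact zero) order, I may assume $m,n \neq 0$. As $x \neq 1$, the residual hypothesis furnishes a homomorphism $\phi \colon G \to H$ onto a torsion free balanced group $H$ with $\phi(x) \neq 1$. Applying $\phi$ to the conjugacy relation gives $\phi(y)\phi(x)^m\phi(y)^{-1} = \phi(x)^n$ in $H$. Because $H$ is torsion free and $\phi(x) \neq 1$, the element $\phi(x)$ has infinite order, so balance of $H$ forces $|m| = |n|$, contradicting our assumption.

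The only mildly subtle point is confirming that $G$ itself is torsion free, which is asserted parenthetically in the statement; this follows from the same residual setup, since any non-trivial torsion element of $G$ would map to a non-trivial torsion element of some torsion free $H$, which is impossible. The step I expect to require the least work is the actual argument itself (a one-line transport of the relation across $\phi$), and there is no real obstacle: the hypothesis has been tailored precisely so that the property ``$x$ is balanced'' transports through quotients to torsion free balanced groups, as long as the image of $x$ is non-trivial, which is exactly what residuality guarantees.
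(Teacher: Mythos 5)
Your proof is correct and follows essentially the same route as the paper: push the relation into a torsion free balanced quotient where the relevant element survives, and invoke balance there. The only (immaterial) difference is that the paper chooses the homomorphism so that the commutator $[g,x]$ survives, whereas you only require that $x$ survive, which indeed suffices since torsion freeness of the image already gives $\phi(x)$ infinite order.
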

\begin{proof}
If $gx^mg^{-1}=x^n$ holds in $G$ for $m\neq n$ then the commutator
$[g,x]$ is non trivial, so take a homomorphism where $[g,x]$ does not
vanish in the image, thus neither does $g$ and $x$. As our relation
still holds but we are now in a torsion free balanced group, we find
that $|m|=|n|$.
\end{proof}
The most common families coming under Proposition \ref{res} are residually
free groups, or more generally residually (torsion free nilpotent)
groups. In particular this is one way to see that
all limit groups and all right angled Artin groups are balanced.

As $F\times\Z$ is balanced for $F$ a free group, this tells us
that the Euclidean Baumslag-Solitar groups are balanced because they
are virtually $F\times\Z$. We have a variation on this:
\begin{prop}
A group $G$ that is virtually (free by cyclic) is balanced.
\end{prop}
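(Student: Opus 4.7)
The plan is to use Lemma \ref{bsic}(ii) to reduce immediately to the case that $G$ itself is free-by-cyclic, with a normal free subgroup $F$ and cyclic quotient $Q$. If $Q$ is finite then $F$ is a finite index subgroup of $G$ and, being free, $F$ is balanced (for example by Proposition \ref{res}), so another application of Lemma \ref{bsic}(ii) gives the result. Thus I may and will assume $Q\cong\Z$, with quotient homomorphism $\phi\colon G\to\Z$.

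Suppose for contradiction that $gx^mg^{-1}=x^n$ for some $x\in G$ of infinite order with $|m|\neq|n|$. Applying $\phi$ and using that $\Z$ is abelian gives $m\phi(x)=n\phi(x)$, and since $m\neq n$ this forces $\phi(x)=0$, that is $x\in F$. Because $F$ is normal in $G$, conjugation by $g$ restricts to an automorphism $\alpha$ of $F$, and the relation becomes $\alpha(x)^m=x^n$ inside the free group $F$.

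The next step is to exploit the rigidity of cyclic subgroups of a free group. Every non trivial element of $F$ has a unique primitive root, so write $x=z^b$ with $z$ primitive. Since $\alpha$ is an automorphism it sends primitive elements to primitive elements (else $\alpha^{-1}$ would express $x$ as a proper power), so $\alpha(z)$ is primitive and $\alpha(x)=\alpha(z)^b$. The equation now reads $\alpha(z)^{bm}=z^{bn}$ in $F$. Two elements of a free group that share a non trivial power lie in a common cyclic subgroup, and two primitive generators of the same cyclic subgroup differ by a sign, so $\alpha(z)=z^{\pm1}$. Substituting back gives $z^{\pm bm}=z^{bn}$, and since $b\neq0$ (as $x$ has infinite order) this yields $|m|=|n|$, contradicting our assumption.

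I expect the only real friction to be the primitive root step in the free group: one must carefully recall both that automorphisms of a free group preserve primitivity and that in a free group two primitive elements with a common non trivial power must be equal up to inversion. Both facts are standard consequences of the fact that commuting elements in a free group are powers of a common element, but the argument has to invoke them cleanly rather than in a geometric or length-theoretic way, since $\alpha(x)$ need have no obvious length relation to $x$.
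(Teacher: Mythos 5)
Your proof is correct and follows essentially the same route as the paper's: reduce to $F\rtimes\Z$ via Lemma \ref{bsic} (ii), project to $\Z$ to force $x\in F$, and then use the unique primitive root (maximal cyclic subgroup) structure of the free group to show that conjugation by $g$ sends the primitive root $z$ of $x$ to $z^{\pm 1}$. Your explicit treatment of the finite cyclic quotient case and your phrasing via the automorphism $\alpha$ preserving primitivity are only cosmetic variations on the paper's argument, which works directly with $gzg^{-1}$ and $g^{-1}zg$.
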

\begin{proof}
We can assume that $G$ is free by cyclic by Lemma \ref{bsic} (ii)
which here will mean $G=F\rtimes\Z$ for $F$ free but not necessarily
finitely generated.

If $gx^mg^{-1}=x^n$ for $m\neq n$ then $x$ must lie in the kernel
of the projection to $\Z$, thus it and $gxg^{-1}$ lie in $F$. 
But $(gxg^{-1})^m=x^n$ implies that 
$x$ and $gxg^{-1}$ are both contained
in the same maximal cyclic subgroup $\langle z\rangle$ of $F$, so that
$x=z^k$ and $gxg^{-1}=z^l$ for some $k,l$ where $nk=ml$. But now
$(gzg^{-1})^{mk}=z^{nk}$ implies that $gzg^{-1}$ also lies in 
$\langle z\rangle$, so if $gzg^{-1}=z^j$ then $j=\pm 1$ (by also
considering $g^{-1}zg$) and
$z^{nk}=x^n=(gxg^{-1})^m=(gzg^{-1})^{mk}=z^{\pm mk}$ so $|m|=|n|$.
\end{proof}

Our last set of examples involve linear groups, but not over $\C$ as
we know that $BS(1,n)$ lies in $GL(2,\C)$, in fact in $GL(2,\Q)$ and
even in $SL(2,\Q)$ if $n$ is a square, but is not balanced for $n>1$.
In fact we consider subgroups of $GL(n,\Z)$, which is of interest
because right angled Artin groups and
``cubulated'' groups in the sense of Wise embed in $GL(n,\Z)$. One might
ask for a group theoretic obstruction to being linear over $\Z$ 
(excluding those that are obstructions to being linear over $\C$ such
as being finitely generated but not
residually finite). We only know of one, again due to Malce'ev,
which is that every soluble subgroup must be polycyclic. Thus we might
ask whether failure to be balanced is also an obstruction 
and indeed this is true.
\begin{thm}
If $gx^mg^{-1}=x^n$ holds in a group $G$ where $|m|\neq |n|$ and $x$ 
has infinite order then $G$ is not linear over $\Z$.
\end{thm}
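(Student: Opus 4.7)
My plan is to argue by contradiction: suppose $G$ embeds in $GL(N,\Z)$ for some $N$ and view $x,g$ as integer matrices. The strategy has two stages. First, I want to show that every eigenvalue of $x$ over $\C$ is a root of unity, which will let me replace $x$ by a suitable power and assume $x$ is unipotent. Second, I will linearise the relation $gx^mg^{-1}=x^n$ using the matrix logarithm and extract a contradiction from a divisibility argument.

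For the first stage, since $x^m$ and $x^n$ are conjugate as complex matrices, they share a common multiset of eigenvalues, so there is a permutation $\sigma\in S_N$ of the eigenvalues $\lambda_1,\dots,\lambda_N$ of $x$ satisfying $\lambda_i^m=\lambda_{\sigma(i)}^n$. Iterating gives $\lambda_i^{m^k}=\lambda_{\sigma^k(i)}^{n^k}$, and taking $k$ to be the order of $\sigma$ yields $\lambda_i^{m^k-n^k}=1$; since $|m|\neq|n|$ forces $m^k\neq n^k$, each $\lambda_i$ is a root of unity. Letting $D$ be the lcm of their orders, $x^D$ is unipotent, has infinite order, and satisfies $g(x^D)^mg^{-1}=(x^D)^n$, so I may replace $x$ by $x^D$ and assume $x$ is unipotent.

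For the second stage, I set $L=\log x=(x-I)-(x-I)^2/2+\cdots$, a finite rational nilpotent sum. Since $\exp$ is injective on nilpotent matrices, the relation $g\exp(mL)g^{-1}=\exp(nL)$ reduces to $gLg^{-1}=(n/m)L$. Clearing denominators gives a nonzero integer nilpotent $L'$ with $g^kL'g^{-k}=(n/m)^kL'$ for all $k$, and since the left side has integer entries (as $g^{\pm 1}\in GL(N,\Z)$), writing $m=dm_1$, $n=dn_1$ with $\gcd(m_1,n_1)=1$ forces $m_1^k\mid L'_{ij}$ for every $k,i,j$. Applying the same iteration to the relation $g^{-1}x^ng=x^m$ yields $n_1^k\mid L'_{ij}$ as well, and since $|m|\neq|n|$ implies $\max(|m_1|,|n_1|)\geq2$, this forces $L'=0$, hence $x=I$, contradicting infinite order.

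The main obstacle I foresee is ensuring the $\log$/$\exp$ linearisation works rigorously over $\Q$ and that the divisibility argument covers every case of $|m|\neq|n|$; in particular, it is crucial that I can iterate conjugation by both $g$ and $g^{-1}$, since either $m_1$ or $n_1$ (but perhaps not both) may have absolute value $1$ when one of $|m|,|n|$ divides the other.
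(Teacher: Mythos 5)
Your proof is correct, but it takes a genuinely different route from the paper's. The paper argues structurally: $\langle x,g\rangle$ is a finitely generated linear group, hence residually finite by Mal'cev, so by Proposition \ref{stff} (ii) it contains a copy of the metabelian group $G_{r,s}=\Z[1/rs]\rtimes\Z$; this is a soluble subgroup of $GL(N,\Z)$ which is not polycyclic (since $\Z[1/rs]$ is not finitely generated), contradicting Mal'cev's theorem that soluble subgroups of $GL(N,\Z)$ are polycyclic. Your argument is instead a direct matrix computation: the eigenvalue permutation trick shows the semisimple part of $x$ has finite order, the matrix logarithm converts the relation on the unipotent part into $gLg^{-1}=(n/m)L$, and integrality of $g^{\pm k}$ then kills $L$ by divisibility, with the two-sided iteration correctly disposing of the case where one of $|m_1|,|n_1|$ equals $1$. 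As for what each buys: the paper's proof is two lines once the $G_{r,s}$ machinery of Section 3 is in place, and it identifies the precise subgroup-theoretic obstruction, but it leans on two nontrivial theorems of Mal'cev; yours is self-contained and elementary, and it uses linearity over $\Z$ directly (in the divisibility step) rather than only through Mal'cev's polycyclicity theorem --- indeed the paper remarks that in its own proof linearity over $\Z$ is never used except to quote Mal'cev, and your argument is essentially the alternative ``linear algebra'' proof the paper alludes to but does not write out. The only points worth making explicit in a final write-up are that the permutation $\sigma$ exists because conjugate matrices have equal characteristic polynomials, and that the $\lambda_i$ are nonzero because $x$ is invertible, so that $\lambda_i^{m^k-n^k}=1$ genuinely forces each $\lambda_i$ to be a root of unity; both are immediate.
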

The proof is given in Section 3, although one can also establish this by
using linear algebra over $\C$. Oddly in both cases
linearity over $\Z$ is never used other than to quote Malce'ev's result. 

\section{The intersector}
In the last section we considered balanced groups. We now go into
a little more detail and look at balanced elements.

If $H$ is a subgroup of $G$
then we can consider the set of elements $g\in G$ 
such that $gHg^{-1}\cap H\neq\{e\}$. In Geometric Group Theory
these have been called the intersecting conjugates of $H$ (as in \cite{wsrr}
p26) although the set is also known as the generalised normaliser of $H$ in 
Combinatorial Group Theory, as defined in \cite{ks}. 
Now this set 
will not form a subgroup in general but a basic yet fundamental
observation here is that
it will if $H$ is infinite cyclic.
\begin{defn}
If $a$ is an element of infinite order in a group $G$ then
we define the {\bf intersector $I_G(a)$ of $a$ in $G$} to be
\[\{g \in G:g\langle a\rangle g^{-1}\cap\langle a\rangle\neq\{e\}\}.\]
\end{defn}
\begin{lem} \label{lem1}
(i) $I_G(a)$ is a subgroup of $G$ containing $\langle a\rangle$.\\
(ii) If $a^k=b^l$ for some $k,l\neq 0$ then $I_G(a)=I_G(b)$.\\
(iii) For any $g\in G$ we have $I_G(gag^{-1})=gI_G(a)g^{-1}$.
\end{lem}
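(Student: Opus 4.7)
The plan is to verify each part by direct manipulation of the defining relations, using only that $a$ has infinite order (so that $a^j = e$ forces $j = 0$). The central observation I would repeatedly exploit is that if $g \in I_G(a)$ then there exist nonzero integers $m,n$ with $g a^m g^{-1} = a^n$, and raising this relation to any nonzero power $k$ gives $g a^{mk} g^{-1} = a^{nk}$, where both sides remain nontrivial powers of $a$.

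For (i), containment of $\langle a\rangle$ is immediate because $a^j$ centralises $\langle a\rangle$. Closure under inverses follows by rewriting $g a^m g^{-1} = a^n$ as $g^{-1} a^n g = a^m$. The main work is closure under multiplication: given $g a^m g^{-1} = a^n$ and $h a^p h^{-1} = a^q$ with all four exponents nonzero, I would first raise the second relation to the $m$-th power to obtain $h a^{pm} h^{-1} = a^{qm}$, then raise the first to the $q$-th power to get $g a^{mq} g^{-1} = a^{nq}$, and combine these into $(gh) a^{pm} (gh)^{-1} = a^{nq}$. Since $pm$ and $nq$ are nonzero, $gh \in I_G(a)$.

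For (ii), I would show each inclusion separately; by symmetry it suffices to show $I_G(a) \subseteq I_G(b)$. Given $g a^m g^{-1} = a^n$ with $m,n \neq 0$, raising to the $k$-th power and substituting $a^k = b^l$ on each side yields $g b^{lm} g^{-1} = b^{ln}$, and since $k,l,m,n$ are all nonzero, the exponents $lm$ and $ln$ are nonzero, so $g \in I_G(b)$.

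For (iii), I would note that $\langle gag^{-1}\rangle = g\langle a\rangle g^{-1}$ and use that conjugation by $g^{-1}$ is a bijection commuting with intersections: $h$ lies in $I_G(gag^{-1})$ exactly when $hg\langle a\rangle g^{-1}h^{-1} \cap g\langle a\rangle g^{-1}$ is nontrivial, and conjugating by $g^{-1}$ this is equivalent to $(g^{-1}hg)\langle a\rangle(g^{-1}hg)^{-1} \cap \langle a\rangle \neq \{e\}$, that is $g^{-1}hg \in I_G(a)$, or $h \in g I_G(a) g^{-1}$. The only subtlety anywhere in the lemma is the bookkeeping in part (i) to ensure the produced exponents are nonzero, which is the sole place the infinite order of $a$ is genuinely used.
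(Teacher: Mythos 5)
Your proof is correct and follows essentially the same route as the paper: the same power-raising computation for closure under products in (i), the same substitution of $a^k=b^l$ into a raised conjugation relation for (ii), and the same observation that conjugation is an automorphism preserving the defining condition for (iii). The only difference is that you spell out a few steps (the inverse closure, the explicit equivalence in (iii)) that the paper leaves as one-line remarks.
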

\begin{proof}
(i) The identity and inverses are clearly in $I_G(a)$ so suppose
$ga^ig^{-1}=a^j$ and $ha^kh^{-1}=a^l$ where all of $i,j,k,l$ are non zero
then $gh a^{ik}(gh)^{-1}=a^{jl}$.\\
(ii) If $g\in I_G(a)$, so that
$ga^ig^{-1}=a^j$ for $i,j\neq 0$, then $gb^{il}g^{-1}=ga^{ik}g^{-1}=
a^{jk}=b^{jl}$ and we can now swap $a$ and $b$.\\
(iii) Conjugation by $g$ is an automorphism of $G$ and the definition
of $I_G(a)$ is purely group theoretic.
\end{proof}
Thus $I_G(a)=\langle a\rangle$ if and only if $\langle a\rangle$ is a 
{\it malnormal} subgroup of $G$. Indeed Lemma \ref{lem1} (i) was already
in \cite{ks}, which applied it to circumstances where $\langle a\rangle$
was malnormal or close to being malnormal in $G$ but here we are
interested in the general setting. Also $I_G(a)$ is equal to the commensurator
of $\langle a\rangle$ in $G$.

We now introduce the idea of the modular homomorphism, which can be thought
of as a variation on the concept of the same name which is defined for
generalised Baumslag-Solitar groups. Here we can work with an arbitrary
group but we only obtain a ``local'' version.
\begin{defn}
Given a group $G$ and an element $a\in G$ of infinite order, the
{\bf modular homomorphism} $\Delta_a^G$ of $a$ in $G$ (or $\Delta_a$
when the group is clear)
is the map from the intersector $I_G(a)$ to
the multiplicative non zero rational numbers $\Q^*$ defined as follows:
if $g\in I_G(a)$ so that $ga^mg^{-1}=a^n$ for some $m,n\in\Z\setminus\{0\}$
then we set $\Delta_a^G(g)=m/n \in \Q^*$.
\end{defn}
It is not hard to see that $\Delta_a^G$ is well defined (because $a$ has
infinite order) and is a homomorphism (this homomorphism 
was noted in \cite{krop3} although with domain restricted to
$\langle g,a\rangle$ for a given $g\in I_G(a)$). Note also that if $g$ has
finite order then $\Delta_a^G(g)=\pm 1$.
\begin{defn}
The {\bf modulus} of $a\in G$ is the image of $\Delta_a^G$ in $\Q^*$ and
$a$ is called a {\bf unimodular} element of $G$ if its modulus is
contained in $\{\pm 1\}$.
\end{defn} 
Note that if we have two elements $a,b\in G$ of infinite order
with a
non trivial power of $a$ conjugate to a power of $b$ then $a$ and $b$
have the same modulus by repeated use of
Lemma \ref{lem1} (ii) and (iii).  
Also we see that a group is balanced if and only if all
its elements of infinite order are unimodular. 

In the previous section we gave many examples of balanced groups, but the only
unbalanced groups mentioned were the non Euclidean 
Baumslag-Solitar groups. Of course any group with one
of these as a subgroup would also be unbalanced, but we have not yet seen any
unbalanced groups which do not contain any Baumslag-Solitar group.\\
\hfill\\
{\bf Example}
Let $r,s$ be non zero coprime integers where $|r|$ and $|s|$ are not both 1  
and consider the ring $\Z[1/rs]$ but considered as a torsion free abelian
group $A$, here written additively, which is locally cyclic as it is a
subgroup of $\Q$. Let us now form
the semidirect product $G_{r,s}=G=A\rtimes\Z$ where the
generator $t$ of $\Z$ acts on $a\in A$ by conjugating $a$ to $(s/r)a$,
which is a metabelian group. Now any element of
$G_{r,s}$ can be written in the form $x=(a,t^k)$, with $G_{r,s}$ generated
by $(1,e)$ and $(0,t)$,  and
if $k\neq 0$ then $x$ is a balanced element (for instance we can
project into $\Z$), thus we see that the set of unbalanced elements in
$G$ is exactly $A\setminus\{0\}$, with the modulus of each element
equal to $\{(r/s)^k:k\in\Z\}$. 
We then have that if neither $|r|$ nor $|s|$ is 1 then $G=G_{r,s}$ does
not contain any Baumslag-Solitar subgroup:
first if $G$ contains $BS(m,n)$ for $|m|\neq |n|$
then neither $|m|$ nor $|n|$ can equal 1 because the only integer
contained in the modulus of any $a\in A$
is $1$. Also $A$ does not contain $\Z^2$ so if $G$ contained
such a subgroup $H\cong\Z^2$ then $H$ would have
non trivial intersection with $A$ but would also have non trivial image
under projection to the $\Z$ factor, so would contain an element of the
form $(a,t^k)$ for $k\neq 0$ which does not commute with anything in
$A\setminus \{0\}$. Thus $G$ can only contain 
non soluble Baumslag Solitar groups but it is soluble.

If $N$ is the normal closure of $a$ in 
$BS(r,s)=\langle a,t|ta^rt^{-1}=a^s\rangle$
and $N'$ the commutator subgroup of $N$
then it is well known that $G_{r,s}$ is isomorphic to $BS(r,s)/N'$,
with $a$ mapping to $(1,e)$ and $t$ to $(0,1)$.
Here if one of $|r|$ or $|s|=1$ then $N'$ is trivial.
Other interesting properties of this correspondence between
$BS(r,s)$ and $G_{r,s}$
when neither $|r|$ nor $|s|$ equal 1 are that $G_{r,s}$
is not finitely presented (\cite{bast}) and that the finite residual
$R$ of $BS(r,s)$ (the intersection of all finite index subgroups) is
equal to $N'$ \cite{mld}. 
These observations can be used to give fairly general
circumstances under which a group which is not balanced
will contain a non Euclidean Baumslag Solitar subgroup.
\begin{lem} \label{homo} If we have a surjective homomorphism
of $G_{r,s}$ in which the image of $(1,e)$ has infinite
order then this is an isomorphism.
\end{lem}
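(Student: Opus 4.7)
The plan is to show that the kernel $K = \ker\phi$ is trivial. Writing $\alpha = (1,e)$, the hypothesis that the image of $\alpha$ has infinite order is equivalent to $\alpha^n \notin K$ for every $n \neq 0$. I will argue in two steps: first that $K \cap A = \{0\}$, and second that this forces $K = \{0\}$.

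For the first step, observe that any non-zero element $x$ of $A = \Z[1/rs] \subset \Q$ can be written as a reduced fraction $p/q$ with $p, q \in \Z$, $q > 0$, $p \neq 0$. Since $K \cap A$ is a subgroup of the additive group $A$, it is closed under addition, so it contains $qx = p$. But the integer $p$ corresponds to $\alpha^p \in K$, contradicting the hypothesis. Hence $K \cap A = \{0\}$.

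For the second step, suppose for contradiction that $K$ contains a non-identity element $(a, t^k)$. If $k = 0$ this is a non-zero element of $K \cap A$, already excluded, so $k \neq 0$. A direct computation in the semidirect product (using the multiplication rule $(a, t^i)(b, t^j) = (a + (s/r)^i b, t^{i+j})$) gives
\[
(1,e)\,(a, t^k)\,(1,e)^{-1}\,(a, t^k)^{-1} \;=\; \bigl(1 - (s/r)^k,\, e\bigr),
\]
which lies in $K \cap A$. Since $\gcd(r,s) = 1$ and $|r|, |s|$ are not both equal to $1$, we must have $|r| \neq |s|$, and hence $(s/r)^k \neq 1$ for every $k \neq 0$. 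Thus the displayed commutator is a non-zero element of $K \cap A$, contradicting the first step. So no such $(a,t^k)$ exists, and $K = \{0\}$.

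The argument is essentially elementary. The only substantive input is the structure of $G_{r,s}$ as $A \rtimes \Z$ together with the observation that any non-zero subgroup of $\Q$ contains a non-zero integer. I do not foresee any genuine obstacle beyond carrying out the commutator computation correctly and verifying that $|r|\neq|s|$ under the standing hypothesis.
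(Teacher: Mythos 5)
Your proof is correct and follows essentially the same route as the paper's: first showing that a non-zero kernel element of $A$ would force a non-zero integer (i.e.\ a power of $(1,e)$) into the kernel, and then using the commutator of a kernel element $(a,t^k)$, $k\neq 0$, with an element of $A$ to produce such a non-zero element of $K\cap A$, since $(s/r)^k\neq 1$. The only cosmetic difference is that you commutate with $(1,e)$ specifically and write out the computation explicitly, whereas the paper conjugates a general $(x,e)$.
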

\begin{proof}
If $(a,e)$ is in the kernel for $a\in A\setminus \{0\}$ then so is
$(n,e)$ for some non zero integer $n$. Otherwise we have some element
$(\cdot,t^k)$ in the kernel for $k\neq 0$, 
but as this element conjugates
$(x,e)$ into $((s/r)^kx,e)$ and $(s/r)^k$ is never 1 here,
we again have $a\in A\setminus \{0\}$ such that
$(a,e)$ is in the kernel too, thus so is some power of $(1,e)$.
\end{proof}
\begin{prop} \label{stff}
Let $G$ be an unbalanced group.\\
(i) If some modulus contains an integer not equal to $\pm 1$ then
$G$ must contain a
Baumslag-Solitar subgroup $BS(1,n)$ for $|n|>1$.\\
(ii) If $G$ is residually finite then $G$ must contain a subgroup
of the form $G_{r,s}$.\\
(iii) If $G$ is residually finite and coherent (meaning that every
finitely generated subgroup is finitely presented)
then $G$ must contain a 
Baumslag-Solitar subgroup $BS(1,n)$ for $|n|>1$.
\end{prop}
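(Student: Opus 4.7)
The three parts share a common reduction. Given any relation $ga^mg^{-1}=a^n$ with $|m|\ne|n|$, or more generally with $m/n$ any chosen element of the modulus of $a$, set $d=\gcd(|m|,|n|)$, $r=m/d$, $s=n/d$, and $c=a^d$. Then $\gcd(r,s)=1$, the element $c$ has infinite order in $G$ because $a$ does, and conjugation by $g$ sends $c^r$ to $c^s$. This yields a surjection $\pi\colon BS(r,s)\twoheadrightarrow\langle g,c\rangle$ sending the stable letter to $g$ and the base generator to $c$. The three parts of the proposition correspond to three ways of upgrading this surjection to an isomorphism onto a recognisable target.

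For part (i), I would pick $g\in I_G(a)$ with $\Delta_a^G(g)=k\in\Z$ and $|k|>1$. Then $m=kn$, so $\{|r|,|s|\}=\{1,|k|\}$ and $BS(r,s)\cong BS(1,k)$. Because one of $|r|,|s|$ equals $1$, the excerpt observes that the subgroup $N'$ is trivial, so $BS(1,k)=G_{1,k}$ already. Since $c$ has infinite order, Lemma \ref{homo} forces $\pi$ to be an isomorphism, giving the desired $BS(1,k)$ subgroup.

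For part (ii), $a$ is merely unbalanced. Either one of $|r|,|s|$ equals $1$, in which case the modulus of $a$ already contains a non-unit integer (using $\Delta_a^G(g^{-1})=n/m$ if it is $|s|$ that equals $1$), and part (i) delivers a subgroup $BS(1,k)=G_{1,k}$; or else both $|r|,|s|\ge 2$. In the latter case I would invoke \cite{mld}, which says the finite residual $R$ of $BS(r,s)$ coincides with the commutator subgroup $N'$ of the normal closure of the base generator, so $BS(r,s)/R=G_{r,s}$. The subgroup $\langle g,c\rangle$ of $G$ is residually finite; and the general observation that a surjection from $K$ onto a residually finite $H$ kills the finite residual of $K$ (because the preimage of a finite-index subgroup of $H$ is a finite-index subgroup of $K$) lets $\pi$ factor as $BS(r,s)\twoheadrightarrow G_{r,s}\twoheadrightarrow\langle g,c\rangle$. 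The induced surjection sends $(1,e)$ to $c$, which is of infinite order, so Lemma \ref{homo} upgrades it to an isomorphism $\langle g,c\rangle\cong G_{r,s}$.

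For part (iii), part (ii) supplies a finitely generated subgroup $G_{r,s}\le G$. If both $|r|,|s|\ge 2$ then $G_{r,s}$ is not finitely presented by the result of \cite{bast} cited in the excerpt, contradicting coherence. Hence one of $|r|,|s|$ equals $1$, and $G_{r,s}\cong BS(1,n)$ with $|n|>1$. The main obstacle throughout is the finite-residual step in (ii): it requires the non-trivial input from \cite{mld} identifying the finite residual of $BS(r,s)$ with $N'$, together with the care needed to verify that $\pi$ still carries this finite residual into the trivial finite residual of the residually finite subgroup $\langle g,c\rangle$. Once this is in hand, the rest of the argument is assembly via Lemma \ref{homo} and citation of \cite{bast}.
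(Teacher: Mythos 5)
Your proposal is correct and follows essentially the same route as the paper: reduce to a surjection $BS(r,s)\twoheadrightarrow\langle g,c\rangle$ after clearing the gcd, upgrade it to an isomorphism via Lemma \ref{homo}, factor through $BS(r,s)/R=G_{r,s}$ using the identification of the finite residual from \cite{mld} for part (ii), and invoke the non-finite-presentability result of \cite{bast} against coherence for part (iii). The only differences are cosmetic (your explicit gcd bookkeeping and the case split in (ii), which the paper absorbs into a ``without loss of generality'').
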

\begin{proof}
If (i) holds then we have infinite order elements $g,a\in G$
with $ga^mg^{-1}=a^n$ where $m=ln$ for $|l|>1$. Thus on replacing $a$ with
$a^n$ we have $g^{-1}ag=a^l$ so that $\langle a,g\rangle$ is a homomorphic
image of $BS(1,l)$. But as $G_{1,l}=BS(1,l)$ and $a$ has infinite order
we have by Lemma \ref{homo} that $\langle a,g\rangle$ is isomorphic to
$BS(1,l)$.

If now $G$ is residually finite but unbalanced, so we have $ga^rg^{-1}=a^s$
for $|r|\neq |s|$ and $|r|,|s|$ coprime without loss of generality then
again $\langle a,g\rangle$ is a homomorphic image of $BS(r,s)$. But it is
a residually finite image so the homomorphism  must factor
through $BS(r,s)/R=G_{r,s}$. Then we can again apply Lemma \ref{homo}.

If however $G$ is also coherent then $G_{r,s}$ cannot be a subgroup of
$G$ unless one of $|r|$ or $|s|$ is 1 (so say $r=1$), in which case 
being unbalanced implies that we have infinite order
elements $a,g\in G$ with $gag^{-1}=a^s$ for $|s|>1$, thus $s$ is
in the modulus of the element $a$ of $G$. In particular in a residually
finite coherent group, every modulus consists only of integers and their
reciprocals.
\end{proof}

Another application to unbalanced groups is that of CT (commutative
transitive) groups, such as torsion free subgroups of $SL(2,\C)$. These
are generalisations of CSA groups, with the latter always balanced 
so we might expect CT groups to be as well. However the
fact that $G_{r,s}$ is a subgroup of $SL(2,\C)$ via the embedding
\[(1,e)\mapsto\sma{cc}1&1\\0&1\fma,\qquad (0,t)\mapsto
\sma{rr}\sqrt{\frac{s}{r}}&0\\0&\sqrt{\frac{r}{s}}\fma\]
(which is injective by Lemma \ref{homo} and so is CT) 
provides immediate counterexamples - though in fact 
essentially the only counterexamples.
\begin{prop} \label{ctid}
If $G$ is a CT group that is not balanced then it contains
an isomorphic copy of $G_{r,s}$ for $|r|\neq |s|$.
\end{prop}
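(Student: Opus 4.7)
My plan is to exploit commutative transitivity to force the normal closure of an unbalance-witnessing infinite order element (inside the appropriate two-generator subgroup of $G$) to be abelian. This identifies that subgroup as a quotient of $G_{r,s}$ for suitable $r,s$, and then Lemma \ref{homo} promotes the quotient map to an isomorphism, giving the embedding.

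Since $G$ is unbalanced there exist $g,a\in G$ with $a$ of infinite order and $ga^mg^{-1}=a^n$ for some $|m|\neq|n|$. Setting $d=\gcd(m,n)$, $m=dr$, $n=ds$ and $b=a^d$, we obtain an infinite order element $b$ and coprime non zero integers $r,s$ with $|r|\neq|s|$ and $gb^rg^{-1}=b^s$. In particular $|r|$ and $|s|$ are not both $1$, so $G_{r,s}$ is defined, and the assignment $a\mapsto b$, $t\mapsto g$ yields a surjection $BS(r,s)\twoheadrightarrow\langle b,g\rangle$.

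Now set $a_i=g^ibg^{-i}$ for each $i\in\Z$. Conjugating the relation $gb^rg^{-1}=b^s$ by $g^i$ gives $a_{i+1}^r=a_i^s$, and as $b$ has infinite order this common value is non trivial. Since $a_i$ and $a_{i+1}$ both commute with this shared power, CT applied to it yields $[a_i,a_{i+1}]=e$. In a CT group the centraliser $C(a_j)$ of any non trivial element $a_j$ is abelian; so an easy induction propagates this to all pairs: if $a_i,a_{j+1}\in C(a_j)$ then both lie in an abelian subgroup and therefore commute. Hence $[a_i,a_j]=e$ for every $i,j\in\Z$.

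The commutator subgroup $N'$ of the normal closure $N$ of $a$ in $BS(r,s)$ is generated by commutators of the form $[t^iat^{-i},t^jat^{-j}]$, each of which maps to $[a_i,a_j]=e$ under the surjection above. Hence this surjection factors through $G_{r,s}=BS(r,s)/N'$, producing a surjection $G_{r,s}\twoheadrightarrow\langle b,g\rangle\leq G$ sending $(1,e)$ to $b$. As $b$ has infinite order, Lemma \ref{homo} forces this surjection to be an isomorphism, embedding $G_{r,s}$ into $G$ as required. The only step that is not essentially formal is the propagation of commutativity from consecutive conjugates to all pairs $a_i,a_j$, and even this is immediate once one observes that adjacent $a_i$'s share a non trivial common power.
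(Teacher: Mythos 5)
Your proposal is correct and follows essentially the same route as the paper: use commutative transitivity to propagate commutation from adjacent conjugates $g^iag^{-i}$ (which share a non-trivial power) to all pairs, conclude that the normal closure of the unbalanced element in $\langle a,g\rangle$ is abelian so the group is a quotient of $G_{r,s}$, and then invoke Lemma \ref{homo}. The only cosmetic differences are your explicit reduction to coprime exponents via $b=a^{\gcd(m,n)}$ and your slightly more careful induction on $|i-j|$, neither of which changes the argument.
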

\begin{proof}
On again taking $ga^rg^{-1}=a^s$ for $g,a$ elements of infinite order and
$r,s$ coprime but $|r|\neq |s|$, we have that $a$ commutes
with $ga^rg^{-1}$ which commutes with $gag^{-1}$, so $gag^{-1}$ commutes with
$a$, whereupon $g^2ag^{-2}$ commutes with $gag^{-1}$ and thus with $a$, and
so on. Thus the normal closure of $a$ in $\langle a,g\rangle$ is abelian,
so that $\langle a,g\rangle$ is a homomorphic image of 
$BS(r,s)/N'=G_{r,s}$ and so is equal to $G_{r,s}$ by Lemma \ref{homo}.
\end{proof}

Finally we can provide the proof of our result left over from the last section.
\begin{thm}
A subgroup of $GL(n,\Z)$ is balanced.
\end{thm}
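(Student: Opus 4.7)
The plan is to exploit the spectral structure of $x$. Suppose $x\in GL(n,\Z)$ has infinite order and $gx^mg^{-1}=x^n$ for some $g\in GL(n,\Z)$; the task is to show $|m|=|n|$. Since $x^m$ and $x^n$ are conjugate in $GL(n,\C)$, writing $\lambda_1,\dots,\lambda_n$ for the eigenvalues of $x$ the multisets $\{\lambda_i^m\}$ and $\{\lambda_i^n\}$ agree, and hence so do $\{m\log|\lambda_i|\}$ and $\{n\log|\lambda_i|\}$. Comparing sums of squares yields $m^2T=n^2T$ with $T=\sum_i(\log|\lambda_i|)^2\ge 0$, so if some eigenvalue of $x$ lies off the unit circle we are done.

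In the remaining case all $|\lambda_i|=1$. Since $x$ has integer entries the $\lambda_i$ are algebraic integers, and the Galois conjugates of each $\lambda_i$ are again eigenvalues of $x$, so they too lie on the unit circle. Kronecker's theorem then forces each $\lambda_i$ to be a root of unity, so for some $N\ge 1$ the matrix $u:=x^N$ is unipotent, and $u\neq I$ because $x$ has infinite order. Raising $gx^mg^{-1}=x^n$ to the $N$-th power gives $gu^mg^{-1}=u^n$, reducing the problem to the unipotent case.

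For that case set $N_0=u-I$ and let $d\ge 1$ be largest with $N_0^d\neq 0$. Expanding $u^k-I=kN_0+\binom{k}{2}N_0^2+\cdots$ and using $N_0^{d+1}=0$ to kill higher terms yields the key identity $(u^k-I)^d=k^dN_0^d$. Applying this to the conjugacy $g(u^m-I)^dg^{-1}=(u^n-I)^d$ produces $gN_0^dg^{-1}=cN_0^d$ with $c=(n/m)^d\in\Q^*$, and a straightforward induction extends this to $g^kN_0^dg^{-k}=c^kN_0^d$ for every $k\in\Z$.

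The finish, which I expect to be the main technical point, is an integrality argument. Since each $g^{\pm k}$ and $N_0^d$ has integer entries, every matrix $c^kN_0^d$ is integral for all $k\in\Z$. Writing $c=p/q$ in lowest terms and fixing any nonzero entry of $N_0^d$, letting $k\to+\infty$ forces $|q|=1$ while letting $k\to-\infty$ forces $|p|=1$, so $c=\pm 1$. Since $c=(n/m)^d$ with $n/m\in\Q^*$, this gives $|n/m|=1$ and hence $|m|=|n|$.
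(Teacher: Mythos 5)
Your proof is correct, and it takes a genuinely different route from the paper. The paper's argument is group-theoretic: it observes that $\langle a,g\rangle$ is a finitely generated linear group, hence residually finite by Mal'cev, so Proposition \ref{stff}(ii) produces a subgroup isomorphic to $G_{r,s}$ with $|r|\neq|s|$; this is a soluble subgroup of $GL(n,\Z)$ that is not polycyclic, contradicting Mal'cev's theorem on soluble $\Z$-linear groups. Your argument is instead a self-contained piece of linear algebra: the eigenvalue-modulus comparison disposes of the case where some eigenvalue lies off the unit circle, Kronecker's theorem reduces the rest to the unipotent case, and there the identity $(u^k-I)^d=k^dN_0^d$ together with the integrality of $g^kN_0^dg^{-k}=c^kN_0^d$ forces $c=\pm1$. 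All the steps check out (note only that you invoke $(u^k-I)^d=k^dN_0^d$ for possibly negative $k$; this still holds since $u^{-1}-I\equiv -N_0 \bmod N_0^2$, but it deserves a word). The paper's route is shorter given the $G_{r,s}$ machinery it has already built, and it ties the theorem to the paper's broader theme of which unbalanced groups contain $G_{r,s}$; your route avoids quoting Mal'cev's polycyclicity theorem entirely and makes explicit exactly where $\Z$-integrality is used (Kronecker, and the final divisibility argument), which is the ``linear algebra over $\C$'' alternative the paper alludes to but does not write down.
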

\begin{proof}
On taking the usual $\langle a,g\rangle$ with $ga^rg^{-1}=a^s$ for
$r,s$ coprime and $|r|\neq |s|$, we have that $\langle a,g\rangle$ is a
finitely generated linear group, thus residually finite so Proposition
\ref{stff} (ii) applies. But $G_{r,s}$ is then a soluble subgroup of
$GL(n,\Z)$ so must be polycyclic which is a contradiction.
\end{proof}

\section[Acylindrical hyperbolicity of graphs of groups]{Acylindrical 
hyperbolicity of graphs of groups with infinite
cyclic edge groups}
We now turn to a topic which, on the face of it, seems to have little to
do with balanced groups. 
In \cite{asos} a subgroup $H$ of a group $G$ is called {\it weakly malnormal}
if there is $g\in G$ such that $gHg^{-1}\cap H$ is finite, and 
{\it $s$-normal} otherwise. Thus if we take $H=\langle a\rangle$ to be 
infinite cyclic, we see that $\langle a\rangle$ being 
$s$-normal/weakly malnormal is
equivalent to $I_G(a)$ being equal/not equal to $G$.
In that paper this concept was introduced in the context of acyclindrical 
hyperbolicity,
with a group being acylindrically hyperbolic implying that it is
$SQ$-universal and in particular is not a simple group. The paper gave
conditions under which the fundamental group of a graph of groups
is acylindrically hyperbolic and then provided related applications.
Here we will stick to the case where all edge groups are infinite cyclic,
whereupon we can use intersectors to determine exactly when a finite
graph of groups with infinite cyclic edge groups is acylindrically
hyperbolic.

The relevance of $s$-normal subgroups in this setting
is twofold: first if a group $G$ is acylindrically hyperbolic then any 
$s$-normal subgroup of $G$
is itself acylindrically hyperbolic: in particular it cannot be a cyclic
subgroup. Also in Section 4 of \cite{asos} we have
sufficient conditions for acylindrical hyperbolicity which we now describe:
given a graph of groups $G(\Gamma)$ with connected graph $\Gamma$ and
fundamental group $G$, an edge $e$ is called good if both edge inclusions into
the vertex groups at either end of $e$ give rise to proper subgroups, otherwise
it is bad. A reducible edge is a bad edge which is not a self loop.
Given a finite graph of groups, we can contract the reducible edges one by
one until none are left. This process does not affect the fundamental group
$G$ and the new vertex groups will form a subset of the original vertex groups. 
It could be that we are left with a single vertex and no edges, in which case
we say that the graph of groups $G(\Gamma)$ was trivial with $G$ equal to
the remaining vertex group. We then have:
\begin{thm} \label{aomn} (\cite{asos} Theorem 4.17)
Suppose that $G(\Gamma)$ is a finite reduced graph of groups which is 
non trivial and which is not just a single vertex with a single bad edge.
If there are edges $e,f$ of $\Gamma$ (not necessarily distinct) 
with edge groups $G_e,G_f$ and an element $g\in G$ such that
$G_f\cap gG_eg^{-1}$ is finite then $G$ is either virtually cyclic or
acylindrically hyperbolic.
\end{thm}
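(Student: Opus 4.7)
The plan is to invoke Osin's criterion that a group acting on a hyperbolic space with at least one loxodromic WPD element is either virtually cyclic or acylindrically hyperbolic. The natural hyperbolic space here is the Bass--Serre tree $T$ of $G(\Gamma)$, which is $0$-hyperbolic, and $G$ acts on $T$ by isometries in the standard way.

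First I would check that the $G$-action on $T$ is ``large enough'': $G$ neither fixes a vertex, nor an end, nor stabilises a line setwise. Since $G(\Gamma)$ is non-trivial and reduced, $T$ has infinitely many vertices and no global fixed vertex. Excluding the case of a single vertex with a single bad edge rules out ascending HNN extensions such as $BS(1,n)$, which fix a unique end of $T$ and can genuinely fail to be acylindrically hyperbolic. In all remaining cases reducedness yields at least one loxodromic element, since otherwise every element of $G$ would be elliptic and the graph of groups could be further collapsed.

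Next, I would upgrade some loxodromic element to a WPD one using the hypothesis. Given edges $e,f$ and $g\in G$ with $G_f\cap gG_eg^{-1}$ finite, a ping-pong construction in the vertex groups flanking $e$ and $f$ produces a loxodromic element $h$ whose axis in $T$ crosses, in order and arbitrarily far apart, an edge in the $G$-orbit of $e$ and an edge in the $G$-orbit of $gf$. The pointwise stabiliser of a sufficiently long sub-segment of this axis is then contained in a conjugate of $G_f\cap gG_eg^{-1}$, hence is finite, which is the WPD condition for $h$. Osin's criterion then yields the desired dichotomy.

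The main obstacle is the ping-pong construction of $h$ in the second step: one needs enough flexibility to ensure that the axis of the product actually traverses edges in the correct orbits in the prescribed order, and one must rule out configurations in which every candidate product remains elliptic (which would happen, for example, if both products collapsed into a bounded region). Reducedness of $G(\Gamma)$ is essential here, since any reducible edge would allow such collapse, and the exclusion of the single vertex with a single bad edge is essential to rule out the remaining pathology of a fixed end. Once the WPD loxodromic $h$ is in hand, the conclusion follows immediately.
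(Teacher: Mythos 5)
First, a point of comparison you could not have known: the paper does not prove this statement at all. It is quoted verbatim as Theorem 4.17 of \cite{asos} (Minasyan--Osin), so there is no in-paper argument to measure yours against. Your outline does follow the strategy of the actual proof in that reference: act on the Bass--Serre tree, manufacture a loxodromic WPD element, and invoke Osin's dichotomy that a group with a WPD loxodromic for some action on a hyperbolic space is virtually cyclic or acylindrically hyperbolic. You have also correctly identified why the two exclusions (trivial graph; single vertex with a single bad edge) are needed, the latter being exactly the ascending HNN case such as $BS(1,n)$.

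However, as a proof this has a genuine gap, and it is the one you yourself flag as ``the main obstacle'': the ping-pong construction is named but not carried out, and it is where all the content lies. The difficulty is not merely making the axis of $h$ cross an edge in the orbit of $e$ and an edge in the orbit of $f$ far apart. If $\varepsilon_1 = h_1e$ and $\varepsilon_2 = h_2f$ are two such edges on the axis, then the pointwise stabiliser of a segment containing both is contained in $h_2\bigl(G_f\cap (h_2^{-1}h_1)G_e(h_2^{-1}h_1)^{-1}\bigr)h_2^{-1}$, and the hypothesis only gives finiteness of $G_f\cap gG_eg^{-1}$ for the one specific $g$ supplied; for an arbitrary relative position of $\varepsilon_1$ and $\varepsilon_2$ nothing is known. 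So the construction must force the conjugating element $h_2^{-1}h_1$ to be (essentially) the given $g$, and arranging this while simultaneously guaranteeing that the product is loxodromic and that the configuration repeats along the whole axis is precisely the technical heart of Minasyan--Osin's Section 4. Two smaller points: your claim that if every element were elliptic ``the graph of groups could be further collapsed'' is not a correct justification (the right statement is Serre's alternative: all-elliptic actions fix a vertex or an end); and ``finite pointwise stabiliser of a long subsegment'' yields WPD only after the standard tree argument reducing the $\varepsilon$-coarse stabiliser of a pair of far-apart axis points to finitely many translates of pointwise segment stabilisers. Neither of these is fatal, but the central construction is, until it is actually done.
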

Consequently 
we might hope that if we have a graph of groups with infinite cyclic
edge groups then the existence of $s$-normal cyclic subgroups
characterises whether the fundamental group of this graph of groups
is acylindrically hyperbolic, which indeed turns out to be the case.
In fact we can characterise acylindrical hyperbolicity directly from
the graph of groups.
\begin{thm} \label{mainac}
Suppose that $G$ is the fundamental group of a non trivial finite 
reduced graph of groups $G(\Gamma)$ where
all edge groups are infinite cyclic, but with no further restriction
on the vertex groups. Then
$G$ is acylindrically hyperbolic unless both of the following two
conditions hold:\\
(i) At each vertex $v\in\Gamma$ with vertex group $G_v$, the 
intersection of all the inclusions of the edge
groups incident at $v$ is a non trivial subgroup of $G_v$.\\
(ii) If (i) holds, so that at each vertex $v$ we can
let $g_v\neq e$ be a generator
of this intersection, then we have $I_{G_v}(g_v)=G_v$.\\
In this case $\langle g_v\rangle$ is $s$-normal in $G$ 
for any $v\in\Gamma$ and
so $G$ is not acylindrically hyperbolic.
\end{thm}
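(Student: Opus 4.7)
The plan is to split into two directions, using Theorem \ref{aomn} for one and a direct $s$-normality argument for the other. When (i) and (ii) both hold, the aim is to show that $\langle g_v\rangle$ is $s$-normal in $G$: since any $s$-normal subgroup of an acylindrically hyperbolic group is itself acylindrically hyperbolic (which an infinite cyclic group is not), this rules out acylindrical hyperbolicity. When (i) or (ii) fails, the aim is to produce the hypothesis of Theorem \ref{aomn}, namely two edge groups, one possibly conjugated, whose intersection is trivial, and then to exclude the virtually cyclic alternative in its conclusion.

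For the $s$-normality of $\langle g_v\rangle$ I would work in the Bass-Serre tree $T$. Each vertex $v'$ of $T$ projects to some $v^*\in\Gamma$, its stabilizer is a conjugate of $G_{v^*}$, and by (i) the intersection of all edge stabilizers at $v'$ is a canonical cyclic subgroup $\langle g_{v'}\rangle$, equivariant under the $G$-action. For any edge of $T$ joining $v'$ to $w'$, both $g_{v'}$ and $g_{w'}$ lie in the infinite cyclic edge stabilizer $\langle c\rangle$, so $g_{v'}=c^{k_{v'}}$ and $g_{w'}=c^{k_{w'}}$ give $g_{v'}^{k_{w'}}=g_{w'}^{k_{v'}}$ in $G$ and hence $\langle g_{v'}\rangle\cap\langle g_{w'}\rangle\neq\{e\}$; iterating along tree paths, any two such cyclic subgroups share a non-trivial power. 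Now fix a lift $\tilde v$ of $v$ in $T$ and take any $h\in G$: if $h$ fixes $\tilde v$ then $h\in G_v$, and by (ii) $h\in I_{G_v}(g_v)$, so $h\langle g_v\rangle h^{-1}\cap\langle g_v\rangle\neq\{e\}$; otherwise $hg_vh^{-1}$ generates $\langle g_{h\tilde v}\rangle$ by equivariance and the common-power result gives the same conclusion.

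For the converse, suppose (i) fails at some $v$, so the infinite cyclic images $H_1,\ldots,H_k$ in $G_v$ of the edge groups at $v$ have trivial total intersection. If $H_1\cap H_i$ were non-trivial for every $i$, then each would have finite index in the infinite cyclic $H_1$, so their total intersection would as well, contradicting triviality; hence some $H_1\cap H_i=\{e\}$, and since vertex groups inject into $G$ this gives $G_{e_1}\cap G_{e_i}=\{e\}$ in $G$, so Theorem \ref{aomn} applies with $g=e$. If instead (i) holds but (ii) fails at some $v$, pick $h\in G_v\setminus I_{G_v}(g_v)$ and any edge $e$ at $v$ with $G_e=\langle c_e\rangle$; since $g_v$ is a non-trivial power of $c_e$, Lemma \ref{lem1}(ii) gives $I_{G_v}(c_e)=I_{G_v}(g_v)$, so $hG_eh^{-1}\cap G_e=\{e\}$ and Theorem \ref{aomn} applies with $e=f$ and $g=h$. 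The remaining subtleties are to dismiss the ``single vertex with a single bad edge'' case, which corresponds to $G\cong BS(1,n)$ and directly satisfies (i) and (ii), and to rule out the virtually cyclic alternative: in a virtually cyclic group every infinite cyclic subgroup has finite index, so any two are commensurable and every intersector is the whole ambient group, both of which are contradicted by the Case A or Case B structure found inside $G_v$.
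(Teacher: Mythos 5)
Your treatment of the converse direction --- producing the hypotheses of Theorem \ref{aomn} when (i) or (ii) fails, via the finite-index argument for infinite cyclic subgroups, Lemma \ref{lem1}(ii), and the exclusion of the virtually cyclic and single-bad-loop cases --- is correct and essentially the paper's argument. For the forward direction you and the paper share the same goal, namely showing $\langle g_v\rangle$ is $s$-normal, but your route through the Bass--Serre tree rests on a false step. You define $\langle g_{v'}\rangle$ as ``the intersection of all edge stabilizers at $v'$'' and assert, citing (i), that this is a non-trivial canonical cyclic subgroup. A vertex $v'$ of the tree lying over $v^*\in\Gamma$ has one incident edge for each coset in $G_{v^*}/G_{e^{\pm}}$ for each edge $e$ of $\Gamma$ at $v^*$, so the intersection of all their stabilizers is the intersection of \emph{all $G_{v^*}$-conjugates} of the edge inclusions, which is typically trivial even when (i) and (ii) hold: for instance, amalgamate two copies of $BS(1,2)=\langle a,t\mid tat^{-1}=a^2\rangle$ over $\langle a\rangle$; then (i) and (ii) hold at each vertex, yet $\bigcap_{n\geq 0} t^n\langle a\rangle t^{-n}=\bigcap_{n\geq 0}\langle a^{2^n}\rangle=\{e\}$. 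Condition (i) controls only the finitely many edge inclusions in $\Gamma$, not their conjugates, so the object you build is not in general non-trivial, and its claimed canonicity and equivariance do not follow from (i).

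The construction is repairable, but the repair uses (ii) rather than (i): set $C_{xG_{v^*}}=x\langle g_{v^*}\rangle x^{-1}$, which is well defined only up to commensurability --- precisely because (ii) says every $h\in G_{v^*}$ conjugates $\langle g_{v^*}\rangle$ to a commensurable subgroup --- and which merely meets, rather than lies inside, each edge stabilizer at $v'$. With ``equals'' replaced by ``is commensurable with'' throughout, your propagation along tree paths and the final dichotomy on whether $h$ fixes $\tilde v$ do go through, since commensurability of infinite cyclic subgroups is transitive. The paper sidesteps this entirely: it builds, by induction over a maximal tree, a single element $g$ that is a common power of every $g_v$, and then verifies $I_G(g)=G$ on a generating set (vertex groups via (ii) and Lemma \ref{lem1}(ii); stable letters because $g$ is a power of both associated edge generators), using only that $I_G(g)$ is a subgroup. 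That version avoids having to make any choice canonical, which is exactly the point where your write-up goes wrong.
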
 
\begin{proof}
First let us assume that (i) fails. As a finite intersection of infinite
cyclic 
subgroups is trivial if and only if all pairwise intersections are trivial,
we will have a vertex $v\in\Gamma$
with the images in the vertex group $G_v$
under inclusion of two edge groups being
$\langle a\rangle$ and $\langle b\rangle$ say 
where $\langle a\rangle\cap\langle b\rangle$ is trivial.
Consequently $G_v$ and
hence $G$ will not be virtually cyclic, which also means 
we do not have a single
vertex with a single bad edge as the edge subgroups are infinite cyclic,
thus we can immediately apply Theorem 
\ref{aomn}.

Now we suppose that (i) holds but at some vertex $v$ we have the generator
$g_v\neq e$ of the intersection of the edge group inclusions has 
$I_{G_v}(g_v)\neq G_v$. Then $I_{G_v}(g_v)=I_{G_v}(a)$ for $a$ a generator
of one of the edge group inclusions into $G_v$. Thus
there is an element $x\in G_v\setminus I_{G_v}(a)$, meaning that
$\langle a\rangle\cap x\langle a\rangle x^{-1}$ is trivial and $G_v$ is
not virtually cyclic. So we again apply Theorem \ref{aomn}.

Conversely if (i) and (ii) both hold then there exists an element $g\in G$ 
which is a power of $g_v$ for every $v\in\Gamma$. To see this, first take
a maximal tree $T$ in $\Gamma$ and form the group $G_T$ from this tree using
amalgamated free products. We argue by induction on the number of edges:
suppose that $T=T_0\cup\{e_l\}$ where $e_l$ is a leaf edge and $T_0$ has
$e_l$ removed, with $G_{T_0}$ the fundamental group of this graph of groups.
Suppose that $g_0\in G_{T_0}$ is a power of every $g_v$ for $v\in T_0$.
Then on adding the edge $e_l$ with new vertex $v_1$ to $T_0$ at the vertex
$v_0\in T_0$, the edge group of $e_l$ provides the inclusion $\langle a\rangle$
into $G_{v_0}$ and $G_{v_1}$. Now $\langle g_{v_0}\rangle\leq\langle a\rangle$
with $g_{v_0}\neq e$ by condition (i) and we are supposing that $g_0$ is a
power of $g_{v_0}$, hence a power of $a$. But at the vertex $v_1$ we have that
$g_{v_1}$ is also a power of $a$, telling us that an appropriate power $g$ of
$g_0$ is also a power of $g_{v_1}$, and thus of every $g_v$ for $v\in T$. This
then confirms our claim for the group $G_T$, but this embeds in $G$ on 
introducing the stable letters for the edges in $\Gamma\setminus T$ and
forming the HNN extensions. Moreover $G$ is generated by its vertex groups
$G_v$ and those stable letters, so that we certainly have all $G_v$ in
$I_G(g)$ by condition (ii) and Lemma \ref{lem1} (ii). Finally each edge in
$\Gamma\setminus T$ provides a stable letter $t$ and edge group inclusions
$\langle a\rangle$ into some $G_v$ and $\langle b\rangle$ into some $G_w$
(possibly $v=w$) where $tat^{-1}=b$. But our element $g$ is  a power of both
$a$ and $b$, so that $t$ is in $I_G(g)$ as well and hence $I_G(g)=G$. Thus
as $g$ is a power of any $g_v$, we have that $I_G(g_v)=G$ too and hence
$G$ is not acylindrically hyperbolic. 
\end{proof}

We can now use this to show that a finitely generated
simple group cannot split over $\Z$.
It is known that a fundamental group of a non trivial finite graph of
groups with all edge groups finite cannot be simple by \cite{loss}. 
\begin{thm} \label{splz}
If $G(\Gamma)$ is a non trivial graph of groups with finitely generated
fundamental group $G$ and where all edge groups are
infinite cyclic 
then $G$ is either:\\
(i) acylindrically hyperbolic or\\
(ii) has a homomorphism onto $\Z$ or\\
(iii) has an infinite cyclic normal subgroup.\\
In particular $G$ is never simple.\\
\end{thm}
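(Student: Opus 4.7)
The strategy is to feed the dichotomy of Theorem \ref{mainac} into the modular homomorphism machinery and read off one of the three alternatives. Passing to a reduced graph of groups does not change $G$, and non-triviality is preserved because by the definitions in Section 4 the reduction collapses to a single vertex with no edges precisely when the original graph of groups was trivial. Thus Theorem \ref{mainac} applies: either $G$ is already acylindrically hyperbolic, giving case (i), or conditions (i) and (ii) of that theorem hold and the construction carried out inside its proof produces an infinite order element $g\in G$ with $I_G(g)=G$. Consequently the modular homomorphism $\Delta_g^G\colon G\to\Q^*$ is defined on the whole of $G$, and its image is finitely generated because $G$ is.

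I now split on whether this image is contained in $\{\pm 1\}$. If it is not, composing $\Delta_g^G$ with the quotient map $\Q^*\to\Q^*/\{\pm 1\}$ lands in a torsion free abelian group; the image of $G$ there is finitely generated and nontrivial, hence free abelian of positive rank, and projecting onto a $\Z$ factor yields a surjection $G\twoheadrightarrow\Z$. This produces case (ii). If instead the image of $\Delta_g^G$ lies in $\{\pm 1\}$, I would fix a finite generating set $x_1,\ldots,x_k$ of $G$ and, for each $i$, pick $m_i\neq 0$ with $x_i g^{m_i}x_i^{-1}=\pm g^{m_i}$. Setting $M$ to be the least common multiple of the $m_i$ and raising each identity to the appropriate power gives $x_i g^M x_i^{-1}=g^{\pm M}$ for every $i$, so every generator of $G$ normalises $\langle g^M\rangle$. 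Hence $\langle g^M\rangle$ is an infinite cyclic normal subgroup of $G$, giving case (iii).

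For the non-simplicity statement, in case (i) acylindrical hyperbolicity forces SQ-universality and in particular the existence of many proper nontrivial normal subgroups. In case (ii) the kernel of the $\Z$-quotient is a proper nontrivial normal subgroup unless $G\cong\Z$, which is not simple. In case (iii) either $\langle g^M\rangle$ is a proper nontrivial normal subgroup, or $G$ itself is infinite cyclic, which again is not simple. The only mildly delicate step in the whole argument is the lcm trick in the unimodular subcase, where one has to promote the element-by-element identities $x_i g^{m_i}x_i^{-1}=\pm g^{m_i}$ into normalisation of a single cyclic subgroup by all of $G$; everything else reduces to an invocation of Theorem \ref{mainac} together with the fact that a finitely generated subgroup of the free abelian group $\Q^*/\{\pm 1\}$ either is trivial or surjects onto $\Z$.
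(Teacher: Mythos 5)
Your proposal is correct and follows essentially the same route as the paper: invoke Theorem \ref{mainac} to get either acylindrical hyperbolicity or an element $g$ with $I_G(g)=G$, then split on whether the modulus of $g$ lies in $\{\pm 1\}$, using the common-power trick to produce a normal infinite cyclic subgroup in the unimodular case and the finitely generated torsion-free abelian image of the modular homomorphism to produce a surjection onto $\Z$ otherwise. The only cosmetic differences are that the paper first disposes of non-tree graphs via $\pi_1(\Gamma)\twoheadrightarrow\Z$ and builds its homomorphism to the positive rationals from a single vertex group $|\Delta_g^{G_v}|$ before extending it, whereas you apply $\Delta_g^G$ globally (legitimate, since Theorem \ref{mainac} gives $I_G(g)=G$ for any graph), and that you should note, as the paper does, that finite generation of $G$ lets one assume $\Gamma$ is finite before reducing.
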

\begin{proof}
As $G$ is finitely generated,
we can assume that $\Gamma$ is a finite graph,
and then as above we can assume
that $G(\Gamma)$ is reduced. It is well known that if $\Gamma$ is not
a tree then $G$ surjects to $\Z$ as $\pi_1(\Gamma)$ is non trivial.
We now apply Theorem \ref{mainac} to obtain acyclindrical hyperbolicity
of $G$ unless both (i) and (ii) hold, in which case
we have our infinite order element
$g\in G$ which lies in every vertex group $G_v$ and such that 
$I_{G_v}(g)=G_v$, but $I_G(g)$ is a subgroup of $G$ containing all $G_v$
and thus is equal to $G$ as $\Gamma$ is a tree.
It is here that the ideas of balanced groups and elements
reenter: first suppose that $g$ is unimodular in every vertex group. Then
$\Delta_g^{G_v}(G_v)$ is contained in the subgroup $\pm 1$ of $\Q^*$ and
therefore so is $\Delta_g^G(G)$. Thus for all $x\in I_G(g)=G$ we can find
an integer $k>0$ such that $xg^kx^{-1}=g^{\pm k}$. Here $k$ depends on $x$
but as we have a finite generating set for $G$ we can find a common power $P$
that
works for all of this set and hence for all of $G$, thus $\langle g^P\rangle$
is normal in $G$. 

Now suppose that we have $v\in\Gamma$ such that $\Delta_g^{G_v}$ is not 
contained in $\pm 1$. We borrow a trick from \cite{krop3}, which is that
$|\Delta_g^{G_v}|$ provides a homomorphism from $G_v$ to the positive rationals
which is non trivial, thus the image is an infinite torsion free abelian
group and $g$ is in the kernel. However this means that $g_v$ is too and
hence also $g_e$ for $g_e$ the generator of any edge group with edge $e$
incident at $v$. Thus on sending all other vertex groups to the identity,
we extend the domain of this homomorphism to all of $G$, which being
finitely generated means that the infinite torsion free abelian image
of $G$ must be $\Z^n$.
\end{proof} 

\section{Absence of relative hyperbolicity}
For the remainder of the paper we will consider only torsion free groups,
so as to allow for clean statements that do not require consideration
of many cases in the corresponding proofs. Henceforth element will mean
a group element of infinite order and power will mean a non zero power,
thus for instance saying that elements $x,y$ in $G$ have conjugate powers is
a shorthand for saying that for all $i,j>0$ we have $x^i,y^j\neq e$ but
there exists $g\in G$ and non zero integers
$m,n$ such that $gx^mg^{-1}=y^n$. 

We now examine whether the fundamental group of a graph of groups 
is relatively hyperbolic with respect to a collection of proper subgroups. 
As this would imply acylindrical hyperbolicity anyway (at least if the
group is not virtually cyclic), our emphasis will be on finding conditions
that ensure that the groups considered in the previous section are not
hyperbolic with respect to any collection
of proper subgroups. For this we adopt the method from 
\cite{bkrp}
Section 4, which itself borrows from $\cite{aas}$. We first summarise the
facts we need about relatively hyperbolic groups, all of which come from
\cite{os04}. We suppose that $G$ is hyperbolic relative to a collection
of proper subgroups $H_1,\ldots, H_l$, the peripheral subgroups, and we say
that $g\in G$ is hyperbolic if $g$ is not conjugate into a peripheral subgroup.
We also assume here for these statements that $G$ is torsion free.
We then have:\\
\textbullet (\cite{os04} Theorem 4.19) If $g\in G$ is hyperbolic then
the centraliser $C_G(g)$ is strongly relatively quasiconvex in $G$. (Here
strongly means that its intersection with any conjugate of a peripheral
subgroup is finite, as in \cite{os04} Definition 4.11.)\\
\textbullet (\cite{os04} Theorem 4.16) A strongly relatively quasiconvex
subgroup of $G$ is word hyperbolic.\\
\textbullet (\cite{os04} Corollary 4.21) If $g$ is a hyperbolic element
in $G$ and we have $t\in G$ with $tg^kt^{-1}=g^l$ then
$|k|=|l|$.\\
\textbullet (\cite{os04} Theorem 1.4)
Any $H_i$ is malnormal, so that if there is
$g\in G$ with $H_i\cap gH_ig^{-1}$ non trivial then $g\in H_i$. Moreover
if there is $g\in G$ with $H_i\cap gH_jg^{-1}$ non trivial then $i=j$.

We know that in a torsion free word hyperbolic group $G$ (where we can just
take the single peripheral subgroup $\{e\}$) the centraliser $C_G(g)$ of any
non identity element $g$ is a maximal cyclic subgroup of $G$, but we can
see that this also holds for the intersector. Indeed we have:
\begin{lem} \label{hypr}
For any non identity $g$ in a group $G$ which is torsion free and hyperbolic
relative to a collection of proper subgroups, either $g$ is hyperbolic in
which case $I_G(g)$ is a maximal cyclic subgroup of $G$ or $I_G(g)$ is
conjugate into a peripheral subgroup.
\end{lem}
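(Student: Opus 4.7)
The proof splits into cases by whether $g$ is parabolic (conjugate into some peripheral $H_i$) or hyperbolic. The parabolic case is brief: using Lemma \ref{lem1}(iii) I can conjugate and assume $g \in H_i$. For any $h \in I_G(g)$ with $hg^mh^{-1}=g^n$, the element $g^n \neq e$ lies in $H_i \cap hH_ih^{-1}$, so malnormality of peripheral subgroups (the fourth bullet) forces $h \in H_i$; conjugating back gives $I_G(g) \subseteq kH_ik^{-1}$ as required.

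For the hyperbolic case I would first identify the candidate maximal cyclic subgroup. Given any $h \in I_G(g)$ with $hg^mh^{-1}=g^n$, the third bullet (Corollary 4.21 of \cite{os04}) gives $|m|=|n|$, so $h^2$ centralises $g^m$. Now $g^m$ is itself hyperbolic, so the first two bullets imply that $C_G(g^m)$ is strongly relatively quasiconvex and hence word hyperbolic; being torsion free with non-trivial centre, it must be infinite cyclic, say $C_G(g^m) = \langle z \rangle$. Since $g$ centralises $g^m$, one has $g \in \langle z \rangle$, and the obvious containments then give $C_G(g) = \langle z \rangle$. The same trick shows $\langle z \rangle$ is a maximal cyclic subgroup of $G$: any strictly larger cyclic subgroup $\langle w \rangle$ would satisfy $w \in C_G(z) \subseteq C_G(g^m) = \langle z \rangle$.

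The heart of the proof is the inclusion $I_G(g) \subseteq \langle z \rangle$. For $h \in I_G(g)$ we have $h^2 \in \langle z \rangle$, say $h^2 = z^s$, so $h \in C_G(z^s)$. I would first check that $z^s$ is hyperbolic: if instead $z^s$ lay in some conjugate peripheral $kH_ik^{-1}$, then $z$, which commutes with $z^s$, would lie there too by malnormality, forcing the power $g^r \in \langle z \rangle$ into that peripheral and contradicting the hypothesis that $g$ is hyperbolic. Re-running the cyclic-centraliser analysis with $z^s$ in place of $g^m$ then gives $C_G(z^s) = \langle z \rangle$, so $h \in \langle z \rangle$; the reverse inclusion is immediate since $z$ commutes with $g$.

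The main obstacle is the jump from $h^2 \in \langle z \rangle$ to $h \in \langle z \rangle$, which cannot be done by extracting a square root inside the cyclic group: the remedy is precisely to route $h$ through the centraliser of $z^s$ and invoke the cyclic-centraliser structure a second time. A secondary point that needs care is verifying that $z^s$ cannot be parabolic, which is handled cleanly by the commuting-element malnormality observation above.
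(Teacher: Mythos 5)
Your proof is correct and follows the same skeleton as the paper's: the parabolic case is dispatched identically via malnormality of peripherals, and the hyperbolic case uses the same three facts from Osin (translation lengths forcing $|m|=|n|$, strong relative quasiconvexity of centralisers of hyperbolic elements, and their word hyperbolicity) to identify the maximal cyclic subgroup $\langle z\rangle=C_G(g)$. Where you genuinely diverge is in handling an $h\in I_G(g)$ that might invert, i.e.\ satisfy $hg^mh^{-1}=g^{-m}$. The paper writes $h^2=z^i$, $g=z^j$ and performs an explicit computation showing such an $h$ would satisfy $h^{4jk}=e$, contradicting torsion freeness; you instead pass to $h\in C_G(h^2)=C_G(z^s)$, check $z^s$ is hyperbolic by the commuting-element malnormality trick, and invoke the cyclic-centraliser machinery a second time to get $C_G(z^s)=\langle z\rangle\ni h$. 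Both are valid; your route trades a short index calculation for a second application of the quasiconvexity results, and is arguably more uniform since it treats the $n=m$ and $n=-m$ cases together. One presentational nit: you assert that $g^m$ is hyperbolic without proof before first applying the centraliser facts; the paper proves this (powers of hyperbolic elements are hyperbolic) by exactly the malnormality argument you later deploy for $z^s$, so you should state that step explicitly rather than only for $z^s$.
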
 
\begin{proof} If $g$ is hyperbolic then $g^k$ is also, as if $g^k\in 
\gamma P\gamma^{-1}$
for $P$ a peripheral subgroup and $\gamma\in G$ then 
$g^k\in g(\gamma P\gamma^{-1})g^{-1}\cap 
\gamma P\gamma^{-1}$, so $g\in \gamma P\gamma^{-1}$ by malnormality.
But the first two points above say that $C_G(g)$ is a word hyperbolic group 
which here will also be torsion free with an infinite centre, 
so it must be infinite cyclic (and maximal in $G$
as it is a centraliser), say 
$C_G(g)=\langle h\rangle$. Now let us take $t\in I_G(g)$, so
that we have $tg^kt^{-1}=g^l$ for $|k|=|l|$, by the above. If $l=k$ then we
see that $t$ commutes with $g^k$, which has the same centraliser as $g$
(because $g^k$ is hyperbolic with $C_G(g)\leq C_G(g^k)$ and these are
maximal cyclic subgroups), so that $t$ is in $C_G(g)$ too. If $l=-k$ then
$t^2$ commutes with $g^k$ so say $t^2=h^i$ and $g=h^j$, whereupon
$(tg^kt^{-1})^i=g^{-ik}$ implies that
\[tt^{2jk}t^{-1}=th^{ijk}t^{-1}=g^{-ik}=h^{-ijk}=t^{-2jk}\]
so that $t$ is a torsion element which is a contradiction.

Now suppose that $g$ lies in 
$\gamma P\gamma^{-1}$ for $P$ a peripheral subgroup and
$\gamma\in G$. If $t\in I_G(g)$ so that $tg^it^{-1}=g^j$ then we have 
$g^j\in t(\gamma P\gamma^{-1})t^{-1}\cap 
\gamma P\gamma^{-1}$ so by malnormality as before we
obtain $t\in \gamma P\gamma^{-1}$.
\end{proof}

We can now prove under very general circumstances that graphs of groups are
not relatively hyperbolic. Of course the condition in the theorem
below on non trivial edge 
groups is needed, otherwise we can obtain free products which will be
relatively hyperbolic with respect to the factors.
\begin{thm} \label{nonhyp}
Let $G(\Gamma)$ be a finite graph of groups where
each vertex group is non trivial and torsion free 
but with no further restriction on the edge groups other than they 
are all non trivial. Suppose that $G$ is relatively hyperbolic with respect to
the collection of subgroups $H_1,\ldots ,H_l$. Suppose further that for all
vertices $v\in\Gamma$, there is a peripheral subgroup $H_{i(v)}$ such that
the vertex group $G_v$ is conjugate in $G$ into $H_{i(v)}$. Then
some peripheral subgroup is equal to $G$,
so that $G$ is not hyperbolic relative to any
collection of proper subgroups.
\end{thm}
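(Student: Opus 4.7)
The plan is to show that a single conjugate $\gamma H_{i_0}\gamma^{-1}$ of one peripheral subgroup contains every vertex group and every stable letter, and is therefore all of $G$; since conjugating preserves containment this forces $H_{i_0}=G$. The essential tool is the almost-malnormality of the peripheral system recorded in the fourth bullet above, namely that whenever $H_i\cap gH_jg^{-1}$ is non trivial we must have $i=j$ and $g\in H_i$.

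First I would fix a base vertex $v_0\in\Gamma$ with $G_{v_0}\subseteq\gamma H_{i_0}\gamma^{-1}$ for some $\gamma\in G$, choose a spanning tree $T$ of $\Gamma$, and propagate this containment outwards by induction on graph distance in $T$. If $v'$ has been handled and $v$ is a $T$-neighbour of $v'$ joined to it by an edge $e$, then the non trivial edge subgroup attached to $e$ gives an element $c\neq e$ lying in both $G_{v'}$ and $G_v$ inside $G$. By hypothesis $G_v\subseteq\delta H_{i(v)}\delta^{-1}$ for some $\delta\in G$, so
\[\gamma H_{i_0}\gamma^{-1}\cap\delta H_{i(v)}\delta^{-1}\ni c\neq e.\]
Rewriting this as $H_{i_0}\cap(\gamma^{-1}\delta)H_{i(v)}(\gamma^{-1}\delta)^{-1}\ni\gamma^{-1}c\gamma\neq e$ and applying malnormality gives $i(v)=i_0$ together with $\gamma^{-1}\delta\in H_{i_0}$, whence $\delta H_{i(v)}\delta^{-1}=\gamma H_{i_0}\gamma^{-1}$ and $G_v$ sits inside the same conjugate. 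Thus every vertex group of $\Gamma$ lies in $\gamma H_{i_0}\gamma^{-1}$.

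It remains to deal with the stable letters coming from the edges of $\Gamma\setminus T$. Each such edge contributes a letter $t$ together with a relation $tat^{-1}=b$ where $a,b$ are non trivial elements of the edge group included into the two (possibly equal) vertex groups at its endpoints. The previous step places both $a$ and $b$ in $\gamma H_{i_0}\gamma^{-1}$, so
\[\gamma H_{i_0}\gamma^{-1}\cap t\,\gamma H_{i_0}\gamma^{-1}\,t^{-1}\ni b\neq e,\]
and malnormality again forces $t\in\gamma H_{i_0}\gamma^{-1}$. Since $G$ is generated by its vertex groups together with these stable letters we obtain $G\subseteq\gamma H_{i_0}\gamma^{-1}$, and therefore $H_{i_0}=\gamma^{-1}G\gamma=G$, as required.

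The argument is essentially a single running application of peripheral almost-malnormality, so there is no substantial obstacle. The only delicate point is that the inductive step must preserve the \emph{same} conjugating coset $\gamma H_{i_0}$ and not merely place each successive vertex group into some (a priori different) conjugate of the same $H_{i_0}$; this is exactly what the $g\in H_i$ half of almost-malnormality guarantees, and the non triviality of the edge groups is what lets us invoke it at every step.
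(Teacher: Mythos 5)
Your proposal is correct and follows essentially the same route as the paper: fix a spanning tree, use the non trivial edge group at each tree edge together with the malnormality of the peripheral system to show every vertex group lies in one fixed conjugate $\gamma P\gamma^{-1}$, and then use the relations $tat^{-1}=b$ to absorb each stable letter into that same conjugate. The only cosmetic difference is that you phrase the propagation step as an explicit induction on distance in $T$ and spell out the coset bookkeeping, which the paper leaves implicit.
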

\begin{proof} 
On picking a maximal tree in $\Gamma$ and any vertex $v$, 
there is $\gamma\in G$ and a peripheral subgroup 
$P$ such that $G_v\subseteq \gamma P\gamma^{-1}$.
But for any $w$ adjacent to $v$ in $T$ we similarly have $\delta\in G$ and a
peripheral subgroup $Q$ such that $G_w\subseteq \delta Q\delta^{-1}$ . But
$G_v\cap G_w$ is non trivial as it contains this edge group so 
$\gamma P\gamma^{-1}=
\delta Q\delta^{-1}$ by the malnormal property for peripheral 
subgroups mentioned above. This means that $P=Q$ and $\delta^{-1}\gamma\in P$ 
so $G_v$ and $G_w$ are in $\gamma P\gamma^{-1}$. 
We now continue until we find that
$G_v$ is in the same conjugate $\gamma P\gamma^{-1}$
of the same peripheral subgroup $P$ for all $v\in\Gamma$.

We now add the stable letters $t_i$: as each $t_i$ conjugates a non trivial
subgroup of $G_v$ to one of $G_w$ for some $v,w\in\Gamma$, we see that
$t_i(\gamma P\gamma^{-1})t_i^{-1}\cap 
\gamma P\gamma^{-1}$ is non trivial, so $t_i$ and hence the
whole group $G$ is in $\gamma P\gamma^{-1}$, thus $G=P$.
\end{proof}

We now need to give conditions on the vertex subgroups of a graph of groups
$G(\Gamma)$ in order to ensure that the conditions of Theorem
\ref{nonhyp} apply. By Lemma \ref{hypr} if each vertex group $G_v$ has an
element whose intersector is all of $G_v$ and $G_v$ is not infinite cyclic
then we can apply Theorem \ref{nonhyp} to conclude that $G$ is not relatively
hyperbolic. However we can repeat this idea by taking bigger and bigger
subgroups of $G_v$ with the same properties, thus giving rise to the
next definition.
\begin{defn}
If $H$ is any non trivial subgroup of a group $G$ then we define the
{\bf full intersecting conjugate} $F_G(H)=F^1_G(H)$ of 
$H=F^0_G(H)$ in $G$ to be the
subgroup
\[\langle g\in G:gHg^{-1}\cap H\neq\{e\}\rangle\]
of $G$ which contains $H$. We then inductively define 
\[F^{n+1}_G(H)=F_G(F^n_G(H))\mbox{ and }
Mal_G(H)=\cup_{n=1}^\infty F^n_G(H).\]
\end{defn}
(Note for an element $a\in G$ of
infinite order we have $F^1_G(\langle a\rangle)=F_G(\langle a\rangle)=
I_G(a)$.)
This is an ascending union of subgroups so also a subgroup and has the
following properties:
\begin{lem} \label{mlcl}
(i) If $S\leq H$ and $H\leq G$ then $Mal_G(S)\leq Mal_G(H)$ and 
$Mal_H(S)\leq Mal_G(S)$.\\
(ii) $Mal_G(H)$ is malnormal in $G$.\\
(iii) If $M$ is a malnormal subgroup of $G$ containing $H$ then
$M$ contains $Mal_G(H)$.
\end{lem}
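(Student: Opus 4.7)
The plan is to prove all three parts by straightforward induction on the depth $n$ of the iterated construction, with the key technical point for (ii) being that any finite collection of elements of $Mal_G(H)$ already lies at some common finite stage $F^n_G(H)$, since we have an ascending union.

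For part (i), I would first show by induction on $n$ that $F^n_G(S) \leq F^n_G(H)$ whenever $S \leq H$. The base case is immediate. For the inductive step, if $g \in G$ generates $F^{n+1}_G(S)$ because $gF^n_G(S)g^{-1} \cap F^n_G(S) \neq \{e\}$, then the inductive hypothesis $F^n_G(S) \leq F^n_G(H)$ gives $gF^n_G(H)g^{-1} \cap F^n_G(H) \neq \{e\}$, so $g \in F^{n+1}_G(H)$. Taking unions yields $Mal_G(S) \leq Mal_G(H)$. The second inclusion $Mal_H(S) \leq Mal_G(S)$ is proved by an analogous induction: any $h \in H$ witnessing a non-trivial intersection of conjugates of $F^n_H(S)$ also witnesses one for $F^n_G(S)$, since $F^n_H(S) \leq F^n_G(S)$ by induction.

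For part (ii), suppose $g \in G$ satisfies $gMal_G(H)g^{-1} \cap Mal_G(H) \neq \{e\}$, and pick a non-trivial element $x = gyg^{-1}$ in the intersection with $y \in Mal_G(H)$. Since the union defining $Mal_G(H)$ is ascending, there exists $n$ with both $x, y \in F^n_G(H)$. Then $x \in gF^n_G(H)g^{-1} \cap F^n_G(H)$ is a non-trivial intersection, so by definition $g \in F^{n+1}_G(H) \leq Mal_G(H)$, establishing malnormality.

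For part (iii), I would again induct on $n$ to show $F^n_G(H) \leq M$. The base case $n=0$ is given. For the inductive step, any generator $g$ of $F^{n+1}_G(H)$ satisfies $gF^n_G(H)g^{-1} \cap F^n_G(H) \neq \{e\}$, and since $F^n_G(H) \leq M$ by hypothesis, this intersection sits inside $gMg^{-1} \cap M$, which is therefore non-trivial; malnormality of $M$ then forces $g \in M$. Taking unions gives $Mal_G(H) \leq M$. The main thing to be careful about throughout is recognising that in each case the key move — passing from the intersection condition for a subgroup to that for a larger subgroup — is essentially automatic from containment, so no step should be a genuine obstacle; the only subtlety is the ascending-union argument in (ii) to reduce an intersection in $Mal_G(H)$ to one at a finite stage.
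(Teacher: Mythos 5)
Your proposal is correct and follows essentially the same route as the paper: the common-finite-stage argument for (ii) and the induction on $F^n_G(H)\leq M$ for (iii) are exactly the paper's proofs, and your induction showing $F^n_G(S)\leq F^n_G(H)$ (and $F^n_H(S)\leq F^n_G(S)$) simply spells out what the paper dismisses as following directly from the definition. No gaps.
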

\begin{proof}
Part (i) follows directly from the definition so
first say that we have $g\in Mal_G(H)$ such that 
$gMal_G(H)g^{-1}\cap Mal_G(H)\neq\{e\}$, so we have
$m_1,m_2\in Mal_G(H)\setminus\{e\}$ with $gm_1g^{-1}=m_2$. Then we have
$N\in\N$ with $m_1,m_2\in F^N_G(H)$, thus 
$gF^N_G(H)g^{-1}\cap F^N_G(H)\neq\{e\}$ and therefore 
$g\in F_G(F^N_G(H))\subseteq Mal_G(H)$.

Now suppose inductively that $F^N_G(H)\subseteq M$. If
$gF^N_G(H)g^{-1}\cap F^N_G(H)\neq\{e\}$ then $gMg^{-1}\cap M\neq\{e\}$ giving
$g\in M$, so all generators of $F^{N+1}_G(H)$ are in the subgroup
$M$ thus $F^{N+1}_G(H)$ is also and $Mal_G(H)$ is the union of these.  
\end{proof}
Thus one could perhaps call $Mal_G(H)$ the malnormal closure of $H$ in $G$.
\begin{co} \label{nonhypco}
Let $G(\Gamma)$ be a finite graph of groups where
each vertex group is non trivial and torsion free 
but with no further restriction on the edge groups other than they 
are all non trivial. Suppose that for all
vertices $v\in\Gamma$ we have $g_v\in G_v$ such that
$Mal_{G_v}(\langle g_v\rangle)=G_v$.
Then $G$ is not hyperbolic relative to any
collection of proper subgroups.
\end{co}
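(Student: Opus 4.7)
The plan is to argue by contradiction, combining Lemma \ref{hypr} with the malnormal closure machinery of Lemma \ref{mlcl} to reduce to Theorem \ref{nonhyp}. Suppose $G$ is hyperbolic relative to a collection of proper subgroups $H_1,\ldots,H_l$. For each vertex $v$ I view $g_v$ as an element of $G$ and apply Lemma \ref{hypr} to produce a malnormal subgroup $M_v$ of $G$ containing $I_G(g_v)$: either $M_v=\gamma P\gamma^{-1}$ is a conjugate of a peripheral subgroup (malnormal, since the peripherals are), or $M_v=I_G(g_v)=\langle h\rangle$ is a maximal cyclic subgroup. The latter is itself malnormal, because any $t\in I_G(h)$ satisfies $I_G(h)=I_G(g_v)=\langle h\rangle$ via Lemma \ref{lem1}(ii), forcing $t\in\langle h\rangle$.

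Since $\langle g_v\rangle\subseteq M_v$ and $M_v$ is malnormal, Lemma \ref{mlcl}(iii) gives $Mal_G(\langle g_v\rangle)\subseteq M_v$, and by Lemma \ref{mlcl}(i) together with the hypothesis
\[G_v=Mal_{G_v}(\langle g_v\rangle)\subseteq Mal_G(\langle g_v\rangle)\subseteq M_v.\]
If every $M_v$ turns out to be a peripheral conjugate then Theorem \ref{nonhyp} applies verbatim and forces one of the peripherals to equal $G$, contradicting properness and finishing the proof.

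The main obstacle is the remaining case in which some $g_v$ is hyperbolic, so that $G_v$ only lands in a maximal cyclic subgroup $\langle h\rangle$ rather than in a peripheral conjugate. I would eliminate this by propagation along $\Gamma$: any edge $e$ incident to $v$ has a non-trivial edge group inside $G_v\subseteq\langle h\rangle$, so a generator $z$ of this edge group is a non-trivial power of $h$ and hence hyperbolic in $G$. For the other endpoint $w$ of $e$, the element $z$ lies in $G_w$, and since hyperbolic elements are not conjugate into peripherals, applying Lemma \ref{hypr} to $g_w$ must again land in the maximal cyclic case; the maximal cyclic subgroup containing $G_w$ shares the hyperbolic element $z$ with $\langle h\rangle$ and so coincides with $\langle h\rangle$ (two maximal cyclic subgroups generated by hyperbolic elements sharing a common non-trivial power are equal, by another application of Lemma \ref{lem1}(ii)). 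Propagating along the connected graph and noting that each stable letter conjugates a non-trivial element of $\langle h\rangle$ to another such, hence lies in $\langle h\rangle$ by malnormality, we conclude $G\subseteq\langle h\rangle$; but an infinite cyclic group cannot be relatively hyperbolic with respect to a collection of proper non-trivial subgroups (no proper non-trivial subgroup of $\Z$ is malnormal in $\Z$), delivering the final contradiction.
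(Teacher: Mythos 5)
Your argument is correct and reaches the same destination (Theorem \ref{nonhyp}) by a genuinely different organisation of the cases. The paper first assumes no vertex group is infinite cyclic and kills the maximal cyclic alternative of Lemma \ref{hypr} outright: if $I_G(g_v)$ were cyclic, say $\langle c\rangle$, the tower $F^n_G(\langle g_v\rangle)$ would stabilise at $\langle c\rangle$, so $Mal_G(\langle g_v\rangle)$ would be cyclic, contradicting the fact that it contains the non-cyclic $G_v$; every $G_v$ therefore lands directly in a peripheral conjugate, and vertex groups isomorphic to $\Z$ are then swept in by a separate propagation inside that peripheral conjugate (with the all-cyclic case dismissed via generalised Baumslag--Solitar groups failing acylindrical hyperbolicity). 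You instead keep both alternatives alive, noting that in either case $I_G(g_v)$ sits inside a malnormal subgroup $M_v$ --- your check that a maximal cyclic $I_G(g_v)=\langle h\rangle$ is malnormal via Lemma \ref{lem1}(ii) is exactly right --- so Lemma \ref{mlcl} traps $G_v$ in $M_v$ uniformly, and you then show the maximal cyclic alternative propagates along the graph and collapses all of $G$ to $\langle h\rangle$. This is arguably cleaner and subsumes the paper's special handling of cyclic vertex groups. The one soft spot is your final sentence: ``no proper non-trivial subgroup of $\Z$ is malnormal'' excludes non-trivial peripherals only, and $\Z$ is of course hyperbolic relative to the trivial proper subgroup. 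But in that situation $G\cong\Z$ and the corollary as stated is itself degenerate (the paper's own escape, that such a group ``cannot even be acylindrically hyperbolic'', has the identical blind spot for virtually cyclic $G$), while whenever $G$ is not itself infinite cyclic your collapse already produces an outright contradiction; so nothing essential is lost.
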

\begin{proof}
First suppose that no vertex groups are copies of $\Z$. Then
we can assume that $F^1_G(\langle g_v\rangle)$ is non cyclic, because
if we have an element $g$ in a group $G$ with 
$I_G(g)=F^1_G(\langle g\rangle)=\langle c\rangle$ then 
$F^2_G(\langle g\rangle)=F^1_G(\langle c\rangle)=I_G(c)$, whereas
$g=c^i$ means that $I_G(g)=I_G(c)$ and thus our ascending union
$Mal_G(\langle g\rangle)$
terminates in the cyclic group $\langle c\rangle$, but here
$Mal_G(\langle g_v\rangle)$ contains $Mal_{G_v}(\langle g_v\rangle)=G_v$
by Lemma \ref{mlcl} (i).
Thus by Lemma \ref{hypr} we
see that we have a peripheral subgroup $P$ and an element
$\gamma\in G$ such that 
$I_G(g_v)=F^1_G(\langle g_v\rangle)\subseteq\gamma P\gamma^{-1}$.
But by malnormality of $\gamma P\gamma^{-1}$ in $G$ we have by Lemma
\ref{mlcl} (iii) that $\gamma P\gamma^{-1}$ contains 
$Mal_G(\langle g_v\rangle)$ which itself contains 
$Mal_{G_v}(\langle g_v\rangle)=G_v$, 
so now we can apply Theorem \ref{nonhyp}.

Now say that some vertex groups are copies of $\Z$. We begin as before
by taking a vertex group $G_{v_0}\not\cong\Z$ (if none exist then we have a
generalised Baumslag-Solitar group as in \cite{krop} which has an
$s$-normal infinite cyclic subgroup and so cannot even be
acylindrically hyperbolic) and proceed similarly, so that 
$G_{v_0}$ is in 
$\gamma P\gamma^{-1}$ along with $G_v$ for all other vertices
encountered so far. If at any stage we now have $G_w=\langle z\rangle\cong\Z$,
where $w$ is adjacent to $v$ in $T$ and we already have 
$G_v \subseteq \gamma P\gamma^{-1}$ then we find that $z^i\in G_v$, where 
$\langle z^i\rangle$ is the edge group inclusion into $G_w$, so
$z^i\in \gamma P\gamma^{-1}$ but then 
$z^i\in z(\gamma P\gamma^{-1})z^{-1}\cap \gamma P\gamma^{-1}$ means that
$z$, and hence $G_w$, is in $\gamma P\gamma^{-1}$ too.
\end{proof}
\hfill\\
{\bf Examples}\\ 
If a torsion free group $S$ is soluble then let us
take an element $s$ in the final non trivial term $S^{(n)}$ of the derived
series. Clearly $s\in S^{(n)}\leq F_S^1(\langle s\rangle)$ because
$S^{(n)}$ is abelian, but then $S^{(i+1)}$ being normal in $S^{(i)}$
means that all $S^{(i)}$ and thus $S$ are in $Mal_S(\langle s\rangle)$
too. Now suppose we have a torsion free group $H$ with an
infinite soluble normal subgroup $S$ as above.
Then $S\leq F_H^{n+1}(\langle s\rangle)$ implies that
$H\leq F_H^{n+2}(\langle s\rangle)$. Let us further suppose that
the torsion free group $G$ has a finite index subgroup $H$ which possesses
an infinite soluble normal subgroup $S$ then, with $S$ and $s$ as before,
we obtain $H\leq F_G^{n+2}(\langle s\rangle)$ and so
$G\leq F_G^{n+3}(\langle s\rangle)\leq Mal_G(\langle s\rangle)$.
Thus we have
\begin{co}
Suppose that $G(\Gamma)$ is a finite graph of groups where each vertex 
group is
torsion free and contains a finite index subgroup which itself has an
infinite soluble normal subgroup, and
where the edge groups are all non trivial. Then $G$ is not hyperbolic
relative to any collection of proper subgroups.
\end{co}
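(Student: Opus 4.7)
The proof is essentially a direct application of Corollary \ref{nonhypco}: the Example immediately preceding the statement already does the bulk of the work, so the plan is only to assemble the ingredients at every vertex. For each vertex $v\in\Gamma$, let $H_v\leq G_v$ be the given finite-index subgroup, $S_v\trianglelefteq H_v$ its infinite soluble (torsion free) normal subgroup, and $g_v$ any non-identity element of the last non-trivial term $S_v^{(n_v)}$ of the derived series of $S_v$. The Example shows exactly that $Mal_{G_v}(\langle g_v\rangle)=G_v$, and Corollary \ref{nonhypco} then delivers the conclusion.

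For clarity, the Example's argument is three successive climbs, each an application of the principle that whenever a non-trivial normal subgroup $N$ of $K$ sits inside $F^k_G(\langle g_v\rangle)$ one has $K\leq F^{k+1}_G(\langle g_v\rangle)$, because $kNk^{-1}=N$ forces the conjugates of $F^k_G(\langle g_v\rangle)$ to intersect non-trivially. First, abelianness of $S_v^{(n_v)}$ gives $S_v^{(n_v)}\leq F^1_{G_v}(\langle g_v\rangle)$, and walking up the derived series $S_v^{(n_v)}\trianglelefteq S_v^{(n_v-1)}\trianglelefteq\cdots\trianglelefteq S_v$ yields $S_v\leq F^{n_v+1}_{G_v}(\langle g_v\rangle)$. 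Second, normality of $S_v$ in $H_v$ promotes $H_v$ into $F^{n_v+2}_{G_v}(\langle g_v\rangle)$.

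The third and only not-quite-routine step passes from $H_v$ to $G_v$ despite $H_v$ not being normal. For $g\in G_v$, the intersection $gH_vg^{-1}\cap H_v$ has finite index in $H_v$, which is infinite (since it contains $S_v$) and torsion free, so this intersection is itself infinite and in particular non-trivial. As $H_v\leq F^{n_v+2}_{G_v}(\langle g_v\rangle)$ and $gH_vg^{-1}\leq gF^{n_v+2}_{G_v}(\langle g_v\rangle)g^{-1}$, we conclude $g\in F^{n_v+3}_{G_v}(\langle g_v\rangle)$ and hence $G_v\leq Mal_{G_v}(\langle g_v\rangle)$. No genuine obstacle arises here; the torsion-free hypothesis is invoked only at this last step, precisely to guarantee that the finite-index intersection $gH_vg^{-1}\cap H_v$ is non-trivial rather than merely of finite index in a possibly finite group.
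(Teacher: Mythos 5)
Your proposal is correct and is essentially the paper's own argument: the paper proves this corollary via the Example immediately preceding it (climbing the derived series of $S_v$, then using normality of $S_v$ in $H_v$, then the finite index of $H_v$ in $G_v$) combined with Corollary \ref{nonhypco}, exactly as you do. Your only addition is to spell out the finite-index step, where the relevant point is really that $H_v$ is infinite (because $S_v$ is) rather than torsion free, but this does not affect the argument.
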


As an example from 3-manifolds, a compact orientable irreducible 3-manifold
is a graph manifold if all components in its JSJ decomposition
are Siefert fibred spaces. In terms of the fundamental group, we can
describe this as a graph of groups where each edge group is non trivial
and where each vertex group has  a finite index subgroup which in turn
has an infinite cyclic normal subgroup. Thus we see that the fundamental
groups of graph manifolds are never relatively hyperbolic with respect
to any collection of proper subgroups, as opposed to when there are
hyperbolic pieces in the decomposition. In this case it is well known
that the fundamental group is hyperbolic relative to the maximal graph 
manifold pieces, including $\Z^2$ subgroups for tori bounding 
hyperbolic pieces on both sides. 

\section[Balanced HNN extensions and amalgams]{Balanced 
HNN extensions and amalgamated free products}
Although the property of being torsion free is preserved under HNN extensions
and amalgamated free products, being torsion free and balanced is not.
For HNN extensions this is obvious even with edge groups that are infinite
cyclic. For amalgamated free products, we have examples such as
$A$ is the free group on $a,b$ and $X$ is the free group on $x,y$ 
but we amalgamate
the rank 2 free subgroup $C=\langle a^3,bab^{-1}\rangle$ of $A$ with
the isomorphic subgroup $\langle x^2,yxy^{-1}\rangle$ of $X$ via
$a^3=x^2,bab^{-1}=yxy^{-1}$ to form $A*_CX$ in which 
$(y^{-1}b)a^2(y^{-1}b)^{-1}=a^3$ holds but $a$ is not trivial. However
in this section we will examine how intersectors change
when forming amalgamated free products or HNN extensions over an
infinite cyclic edge group, as this will provide necessary and sufficient
conditions as to when the property of being balanced is preserved under
these constructions.
These will then be applied to the more general
graph of groups construction in the next two sections. First we consider
amalgamated free products $G=A*_CB$. If $g\in G$ is not in $C$ then
we can express $g$ as
$g_r\ldots g_2g_1$ for length $r\geq 1$ 
and $g_1,g_2,\ldots ,g_r$ coming alternately from $A\setminus C$ and 
$B\setminus C$ (not uniquely though), 
which we will refer to as a reduced form for $g$. 
Conversely an element of this 
form cannot equal the identity or lie in $C$; indeed if $r\geq 2$
then it cannot even lie in $A\cup B$.
\begin{thm} \label{amal} If $A$ and $B$ are balanced torsion free groups and
$G=A*_CB$ for infinite cyclic $C$ then $G$ is also balanced.
\end{thm}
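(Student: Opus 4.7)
The plan is to use the Bass--Serre tree $T$ associated to the splitting $G = A *_C B$. Suppose $x \in G$ has infinite order and $g x^m g^{-1} = x^n$ with $m, n \neq 0$; I want to show $|m| = |n|$. First, if $x$ acts hyperbolically on $T$ with translation length $\ell(x) > 0$, then $\ell(x^k) = |k|\ell(x)$ for all $k \neq 0$ and $\ell$ is a conjugacy invariant, so $|m|\ell(x) = |n|\ell(x)$ gives $|m| = |n|$ immediately. The remaining case is when $x$ fixes a vertex of $T$, so after conjugation in $G$ we may assume $x \in A$.

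With $x \in A$, I would split on whether some nontrivial power of $x$ is conjugate in $A$ to an element of $C$. If not, I claim $I_G(x) \subseteq A$. To see this, take $g \in I_G(x)$ with $g x^p g^{-1} = x^q$ and write $g = g_r \cdots g_1$ in reduced normal form. If $r \geq 2$ with $g_1 \in A \setminus C$, then $g_1 x^p g_1^{-1} \in A$; the word $g_r \cdots g_2(g_1 x^p g_1^{-1}) g_2^{-1} \cdots g_r^{-1}$ equals $x^q \in A$, so unless $g_1 x^p g_1^{-1} \in C$, this word is reduced of length $2r - 1 \geq 3$, a contradiction. But $g_1 x^p g_1^{-1} \in C$ would place a power of $x$ conjugate in $A$ into $C$, contrary to the current assumption. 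The case $g_1 \in B \setminus C$ similarly forces $x^p \in C$, again contradicting the assumption. So $r \leq 1$ with $g \in A$, giving $I_G(x) \subseteq A$, and balance of $A$ yields $|m| = |n|$.

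If instead some $x^k$ is conjugate in $A$ into $C$, replace $x$ by its $A$-conjugate so that $x^k = c^l$, where $c$ generates $C$. By the modulus remark following Lemma \ref{lem1}, $x$ and $c$ have the same modulus in $G$, so it suffices to show $c$ is unimodular. I would do this by induction on the reduced normal form length $r$ of $g \in I_G(c)$ satisfying $g c^p g^{-1} = c^q$. Length $0$ or $1$ reduces at once to balance of $A$ or $B$, using $c \in A \cap B$. For $r \geq 2$ and $g = g_r \cdots g_1$, say with $g_1 \in A \setminus C$, the normal form analysis above forces $g_1 c^p g_1^{-1} \in C$, say equal to $c^{p_1}$, with $|p| = |p_1|$ by balance of $A$. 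Then $g' = g_r \cdots g_2$ is in reduced form of length $r - 1$ and satisfies $g' c^{p_1} g'^{-1} = c^q$, so induction yields $|p_1| = |q|$ and hence $|p| = |q|$; the case $g_1 \in B \setminus C$ is symmetric.

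The main obstacle is arranging the induction on the cyclic generator $c$ rather than directly on $x$; the reduction applied to a general $x \in A \setminus C$ produces intermediate equations mixing powers of $c$ and powers of $x$ that do not iterate cleanly. Passing to $c$ via the same-modulus observation keeps every intermediate conjugation inside $\langle c \rangle$ and lets balance of $A$ alternate with balance of $B$ as we strip one letter from $g$ at a time, which is precisely where the hypothesis that $C$ is infinite cyclic (so that $c$ makes sense as a single element in both factors) enters the argument.
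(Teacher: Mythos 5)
Your proof is correct and takes essentially the same route as the paper's: hyperbolic elements are dispatched by translation length on the Bass--Serre tree, and for an elliptic $x\in A$ one splits on whether a power of $x$ is conjugate in $A$ into $C$, using reduced forms either to trap the conjugating element in $A$ or to transfer the problem (via the equal-modulus observation) to unimodularity of the generator $c$, which is then settled by stripping one letter of $g$ at a time while invoking balancedness of $A$ and $B$ alternately. Your explicit induction on the reduced length of $g$ is just a cleaner formalisation of the paper's ``by continuing in this way'' step.
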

\begin{proof} Suppose otherwise, so
that there is $x\in G$ with a power $x^m$ conjugate in $G$ to
$x^n$ for $|m|\neq |n|$. On considering the action of $G$ on the
associated Bass-Serre tree, we see that all hyperbolic elements
are balanced (as they have non zero translation lengths), so $x$
must be conjugate in $G$ into either $A$ or $B$ but conjugates of
balanced elements are balanced. Thus without loss of generality
$x=a\in A$ and $ga^mg^{-1}=a^n$ for some $g\in G$, whereupon
we must have $g\notin A$ because $a$ is balanced in $A$ by
assumption.

We first suppose that no power of $a$ is conjugate in $A$ into $C$.
On expressing $g=g_r\ldots g_1$ in reduced form for $r\geq 1$, we have
that $a^m\in A\setminus C$, so if $g_1\in B\setminus C$ then $ga^mg^{-1}$
is in reduced form when written as $g_r\ldots g_1a^mg_1^{-1}\ldots g_r^{-1}$
and so is not even in $A$, let alone equal to a power of $a$. If however
$g_1\in A\setminus C$, whereupon we would have $r\geq 2$ as $g\notin A$,
then also 
$g_1a^mg_1^{-1}\in A\setminus C$ by our assumption, so now $g$ is in reduced
form of length at least three
when written as $g_r\ldots g_2(g_1a^mg_1^{-1})g_2^{-1}\ldots g_r^{-1}$
and hence is not in $A$. 

Now suppose that $C=\langle c\rangle$ and some power $a^i$ say of $a$
is conjugate in $A$ into $C$, so by replacing $a$ and $a^i$ with the relevant
conjugates in $A$ we can assume $a^i$ is equal to some power $c^j$ of $c$.
Establishing that $c$ is unimodular in $G$ also implies that $c^j$ and
hence $a^i$ and $a$ are all unimodular in $G$ too. We clearly
have $\langle I_A(c),I_B(c)\rangle\leq I_G(c)$ as $I_G(c)\cap H=I_H(c)$
for any subgroup $H$ of $G$ containing $c$, so we show containment
the other way: given $g\in G\setminus (A\cup B)$ with $g\in I_G(c)$,
so that $gc^kg^{-1}=c^l$ for some non zero $k,l$, we again write 
$g=g_r\ldots g_1$ in reduced form of length $r\geq 2$. But regardless
of whether $g_1\in A\setminus C$ or $B\setminus C$, we have two cases:
case 1 is that $g_1c^kg_1^{-1}$ is in $A\setminus C$ or $B\setminus C$ and thus
$gc^kg^{-1}$ is in reduced form of length at least 3 so is not in $A\cup B$.
The other case is when $g_1c^kg_1^{-1}$ is in $C$ but then $g_1$ is in
$I_A(c)$ or $I_B(c)$ and either way we would find that $g_1c^kg_1^{-1}=
c^{\pm k}$ as $A$ and $B$ are balanced. By continuing in this way with
$g_2,\ldots ,g_r$, we find that either $gc^kg^{-1}$
terminates in a reduced form and so $g\notin I_G(c)$, or 
$gc^kg^{-1}=c^{\pm k}$ so that $|k|=|l|$ and all of $g_1,\ldots ,g_r$ were
in $I_A(c)\cup I_B(c)$.
\end{proof}
\begin{co} \label{trco}
If $G$ is the fundamental group of a finite graph of torsion free, balanced
groups with all edge groups infinite cyclic and the graph is a tree then
$G$ is also balanced.  
\end{co}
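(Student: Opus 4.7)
The plan is to prove Corollary \ref{trco} by induction on the number of edges in the tree $T$, using Theorem \ref{amal} at each step. The base case is a tree with no edges, so $G$ is a single vertex group which is balanced by hypothesis.

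For the inductive step, suppose $T$ has $n \geq 1$ edges and that the result is known for all trees with fewer edges. I would choose a leaf edge $e$ of $T$ joining a leaf vertex $v$ to a vertex $w$ of the remainder; let $T_0$ be the subtree obtained by deleting $e$ and $v$, and let $G_0$ be the fundamental group of the induced graph of groups on $T_0$ (with the same vertex and edge groups inherited from $G(\Gamma)$). By the inductive hypothesis, $G_0$ is balanced, and it is torsion free because amalgamated free products of torsion free groups over any common subgroup are torsion free (applied repeatedly as one builds up the tree). The edge group $C_e \cong \Z$ of $e$ includes into $G_v$ as given, and into $G_0$ through the inclusion of $G_w$, so $G \cong G_0 *_{C_e} G_v$. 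Since both factors are torsion free and balanced, and $C_e$ is infinite cyclic, Theorem \ref{amal} applies directly to show that $G$ is balanced.

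The only things to verify carefully are (i) that the fundamental group of a tree of groups decomposes in the iterated amalgamated form claimed above (so that peeling off a leaf really does present $G$ as a single amalgamation of $G_0$ with $G_v$ over the edge group of $e$), which is the standard Bass--Serre description of $\pi_1$ of a tree of groups; and (ii) that torsion-freeness propagates through the induction, which is classical. Neither step presents a real obstacle, and indeed no step does: the combinatorial content has already been absorbed by Theorem \ref{amal}, so the corollary is essentially a routine unwinding of the tree one leaf at a time.
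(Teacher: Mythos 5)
Your proof is correct and takes essentially the same approach as the paper, which simply says to build $G$ up by repeated amalgamations over the tree and apply Theorem \ref{amal} at each stage; your version just makes the induction on leaf edges and the torsion-freeness bookkeeping explicit.
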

\begin{proof} Build $G$ up by repeated amalgamations and use Theorem
\ref{amal} at each stage.
\end{proof}

Now we come to HNN extensions, whereupon it is clear that we can create
non balanced groups, for instance Baumslag-Solitar groups, from
balanced ones. We suppose that $G$ is an HNN extension of the base group
$H$ and associated isomorphic subgroups $A,B$ of $H$,
with $tAt^{-1}=B$. Again we need the concept of a reduced form in that 
any element $g\in G\setminus H$ can be expressed as
$h_rt^{\epsilon_r}\ldots h_1t^{\epsilon_1}h_0$ of length
$r\geq 1$,
$h_i\in H$, $\epsilon_i\in\{\pm 1\}$ and no pinch (an
appearance of $th_jt^{-1}$ for $h_j\in A$
or $t^{-1}h_jt$ for $h_j\in B$) occurs, and conversely an element in such 
form does not lie in $H$. This allows us to give sufficient conditions
under which the HNN extension is also balanced.
\begin{prop} \label{hnnp}
If $G$ is an HNN extension of the balanced, torsion free group $H$ with
stable letter $t$ and infinite cyclic associated subgroups 
$A=\langle a\rangle$ and $B=\langle b\rangle$ of $H$ so that $tat^{-1}=b$
then:\\
(i) If $h\in H$ but no power of $h$ is conjugate in $H$ into $A\cup B$
then $h$ is still unimodular in $G$.\\
(ii) If no conjugate in $H$ of $B$ intersects $A$ non trivially then $G$ is
also a balanced group.
\end{prop}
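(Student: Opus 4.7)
The plan is to exploit normal forms in HNN extensions, using the hypotheses to rule out certain pinches. For part (i), suppose $gh^mg^{-1}=h^n$ with $m,n$ non zero. If $g\in H$, balancedness of $H$ gives $|m|=|n|$ immediately. Otherwise write $g=h_rt^{\epsilon_r}\cdots t^{\epsilon_1}h_0$ in reduced form with $r\geq 1$. The innermost element $h_0h^mh_0^{-1}$ is a conjugate in $H$ of a power of $h$, hence by hypothesis does not lie in $A\cup B$. So no pinch is available at $t^{\epsilon_1}$, and $gh^mg^{-1}$ remains in reduced form with $t$-length $2r\geq 2$, which cannot equal $h^n\in H$; contradiction.

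For part (ii) I would take an infinite order $x\in G$ with $gx^mg^{-1}=x^n$. The Bass-Serre action shows that hyperbolic elements have positive translation length which is invariant under conjugation and scales by $|n|$ on $n$-th powers, so they are automatically balanced. Hence I may assume $x$ is elliptic and, after conjugating, that $x\in H$. If no power of $x$ is conjugate in $H$ into $A\cup B$, part (i) applies. Otherwise some power of $x$ has a conjugate lying in $A\cup B$; since $tat^{-1}=b$ gives $a$ and $b$ the same modulus in $G$, and by Lemma \ref{lem1}(ii),(iii) elements with non trivially intersecting powers share moduli, $x$ has the same modulus in $G$ as $a$. It therefore suffices to prove that $a$ is unimodular in $G$.

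To do so, take $g\in I_G(a)$ with $ga^mg^{-1}=a^n$; if $g\in H$ use balancedness of $H$, otherwise write $g$ in reduced form of $t$-length $r\geq 1$. The key observation is that hypothesis (ii) forces the signs $\epsilon_1,\ldots,\epsilon_r$ to alternate strictly: starting from the inner $a^m$, a pinch at $t^{\epsilon_1}$ needs $h_0a^mh_0^{-1}\in A$ if $\epsilon_1=+1$ or $\in B$ if $\epsilon_1=-1$, and the latter is forbidden since it would put a conjugate of $A$ non trivially into $B$. Balancedness of $H$ then forces the successful pinch to send $a^{\pm m}$ to $b^{\pm m}$; the analogous analysis one level out forces $\epsilon_2=-1$ and sends $b^{\pm m}$ to $a^{\pm m}$, and so on. After $r$ successful pinches the result is $a^{\pm m}$ if $r$ is even and $b^{\pm m}$ if $r$ is odd; since (ii) with the trivial conjugating element yields $A\cap B=\{e\}$, the odd case cannot equal $a^n$, so $r$ is even and $|m|=|n|$. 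The main obstacle is executing this alternation cleanly, keeping track at each step of whether the current inner element lies in $A$ or $B$, invoking hypothesis (ii) to forbid the opposite sign of pinch, and using balancedness of $H$ to prevent the exponent from drifting off $\pm m$.
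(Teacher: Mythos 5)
Your proof is correct and follows essentially the same route as the paper: Britton's lemma reduced forms for part (i), reduction to elliptic elements via Bass--Serre translation lengths, and for part (ii) tracking successive pinches, using the hypothesis to forbid crossings between conjugates of $A$ and $B$ and balancedness of $H$ to keep the exponent at $\pm m$ (the paper works with a general $h$ having a power in $A$ rather than reducing to $a$ itself, but this is cosmetic). The only (harmless) imprecision is at the outermost layer: the final expression is $h_r(\cdot)h_r^{-1}$, so the odd case is excluded by the full hypothesis that no conjugate in $H$ of $B$ meets $A$ non trivially (not merely $A\cap B=\{e\}$), and the even case needs one further appeal to balancedness of $H$.
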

\begin{proof}
On being given $h\in H$,
suppose that there is $g\in G\setminus H$ with $gh^ig^{-1}=h^j$ and
$g=h_rt^{\epsilon_r}\ldots h_1t^{\epsilon_1}h_0$ in reduced form. Then
$gh^ig^{-1}$ is also in reduced form and hence not in $H$ unless
$\epsilon_1=+1$ and $h_0h^ih_0^{-1}\in A$ or $\epsilon_1=-1$ and
$h_0h^ih_0^{-1}\in B$. But neither of these occur in (i) so
$I_H(h)=I_G(h)$ and $h$ is also unimodular in $G$. 

For (ii)
we again note that by using conjugacy and the Bass-Serre tree, we need
only check the unimodularity of elements in $H$, so by (i) we can now
assume that without loss of generality some power of $h$ lies in $A$
but that no power of $h$ is conjugate in $H$ into $B$. Thus if $gh^ig^{-1}$
is to be an element of $H$ in the
above then we can only have $\epsilon_1=+1$ and 
$h_0h^ih_0^{-1}\in A$, say $a^k$ so that 
\[gh^ig^{-1}=h_rt^{\epsilon_r}\ldots t^{\epsilon_2}h_1b^kh_1^{-1}
t^{-\epsilon_2}\ldots t^{-\epsilon_r}h_r^{-1}\] because $tat^{-1}=b$. 
But $h_1b^kh_1^{-1}$
cannot be in $A$ so again we are reduced if $\epsilon_2=+1$, or if
$\epsilon_2=-1$ and $h_1b^kh_1^{-1}$ is not in $B$. But if
$h_1b^kh_1^{-1}=b^l$ then $|k|=|l|$ because $b$ is unimodular in $H$.
By continuing in this way, we see that either $gh^ig^{-1}$
terminates in a reduced word not lying in $H$, or we merely pass through
$b^{\pm k}$ or $a^{\pm k}$ as we evaluate $gh^ig^{-1}$
from the middle outwards. But if we end up at the last step with 
$h^j=gh^ig^{-1}=h_rb^{\pm k}h_r^{-1}$ then $h^j$ would be conjugate in $H$
into $B$. Thus here we can only end up with 
\[h^j=gh^ig^{-1}=h_ra^{\pm k}h_r^{-1}=h_rh_0h^{\pm i}h_0^{-1}h_r^{-1}\]
so $|i|=|j|$ as $h$ is unimodular in $H$.
\end{proof}

We can now give the exact condition on when such an HNN extension would
not be balanced.
\begin{thm} \label{hnnt}
Let $G$ be an HNN extension of the balanced, torsion free group $H$ with
stable letter $t$ and infinite cyclic associated subgroups 
$A=\langle a\rangle$ and $B=\langle b\rangle$ of $H$ where $tat^{-1}=b$.
Suppose further that there is $h\in H$ conjugating a power of $a$ to a
power of $b$, so that $ha^ih^{-1}=b^j$. 
Then $G$ is balanced if and only if $|i|=|j|$.
\end{thm}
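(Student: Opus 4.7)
For the forward direction, I would combine $ha^ih^{-1}=b^j$ with $tat^{-1}=b$ to get $(t^{-1}h)a^i(t^{-1}h)^{-1}=t^{-1}b^jt=a^j$, which exhibits $a$ as an unbalanced element of $G$ whenever $|i|\neq|j|$.

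For the converse, the plan is to assume $|i|=|j|$ and show every element $x$ of infinite order in $G$ is unimodular. Using the action on the Bass-Serre tree, a hyperbolic element has positive translation length and so is automatically balanced, so I can replace $x$ by a conjugate lying in $H$. By Proposition \ref{hnnp}(i) I may further assume this $x\in H$ has some power conjugate in $H$ into $A\cup B$, and after conjugating in $H$ that either $x^k=a^l$ or $x^k=b^l$; in either case an equation $gx^mg^{-1}=x^n$ in $G$ yields a corresponding equation $ga^{ml}g^{-1}=a^{nl}$ or $gb^{ml}g^{-1}=b^{nl}$. Since $ha^ih^{-1}=b^j$ forces $a$ and $b$ to have the same modulus in $G$ by Lemma \ref{lem1}(ii) and (iii), it is enough to prove that $a$ is unimodular in $G$.

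For this, I take $g\in I_G(a)\setminus H$ with $ga^pg^{-1}=a^q$ and write $g=h_rt^{\epsilon_r}\ldots h_1t^{\epsilon_1}h_0$ in reduced form. Following the strategy of Proposition \ref{hnnp}(ii), I would evaluate $ga^pg^{-1}$ from the middle outwards. Since the result lies in $H$, Britton's lemma forces a pinch at every stage, so at each step the current element is either some $a^{p'}$ or some $b^{p'}$. Conjugations by the $h_i$ that keep us inside $A$ (respectively $B$) preserve $|p'|$ because $H$ is balanced. The main obstacle, which is the feature absent in Proposition \ref{hnnp}(ii), will be controlling pinches that cross from $A$ into $B$ or vice versa.

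For such a pinch, suppose $h_ia^{p'}h_i^{-1}=b^m$. Raising to the $i$-th power gives $h_ia^{ip'}h_i^{-1}=b^{im}$, and from $ha^ih^{-1}=b^j$ we deduce $ha^{ip'}h^{-1}=b^{jp'}$; hence $b^{im}$ and $b^{jp'}$ are conjugate in $H$. Since $H$ is balanced this yields $|im|=|jp'|$, and combined with $|i|=|j|$ we obtain $|m|=|p'|$. A symmetric argument handles pinches from $B$ back into $A$. Tracking $|p'|$ through the entire reduction therefore gives $|q|=|p|$, so $a$ will be unimodular in $G$ and $G$ will be balanced.
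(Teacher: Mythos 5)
Your proof is correct and follows essentially the same route as the paper: necessity via the relation $s^{-1}a^is=a^j$ for $s=h^{-1}t$, and sufficiency by reducing to the unimodularity of $a$ and then tracking exponents through the forced pinches in a reduced form, using that $H$ is balanced together with $ha^ih^{-1}=b^j$ to control the $A\leftrightarrow B$ crossings. The only cosmetic difference is that the paper first replaces $t$ by the stable letter $s=h^{-1}t$ so that both associated subgroups share a power of $a$, whereas you keep $t$ and handle the crossings directly; the underlying calculation is the same.
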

\begin{proof}
We first replace $t$ with the alternative stable letter $s=h^{-1}t$ which
conjugates $A$ to $h^{-1}Bh$, so that $sa^js^{-1}=a^i$ and hence $|i|=|j|$
is a necessary condition for $G$ to be balanced, so we assume this for now
on. By Proposition \ref{hnnp} the only elements we need to show are balanced
are those in $H$ having a power conjugate in $H$ into $A\cup h^{-1}Bh$, so
without loss of generality we take $h\in H$ with a power conjugate in $H$
into $A$, but then we need only show that this power of $a$ is unimodular
and this reduces to looking at $I_G(a)$ which we will now show is equal to
$\langle s,I_H(a)\rangle$.

As $s\in I_G(a)$ already, we again take
$g=h_rs^{\epsilon_r}\ldots s^{\epsilon_1}h_0\in G\setminus H$ 
and in reduced form, along with an arbitrary power $a^k$ of $a$. As before
$ga^kg^{-1}$ will be reduced unless at least $\epsilon_1=+1$ and 
$h_0a^kh_0^{-1}\in A$, in which case it is equal to $a^{\pm k}$ as $a$ is
unimodular in $H$, or $\epsilon_1=-1$ and $h_0a^kh_0^{-1}\in h^{-1}Bh$. But
the former case means that
\[s^{\epsilon_1}h_0a^kh_0^{-1}s^{-\epsilon_1}=h^{-1}b^{\pm k}h\]
whereas if we had $h_0a^kh_0^{-1}=h^{-1}b^lh$ in
the latter case then $h_0a^{ik}h_0^{-1}=h^{-1}b^{il}h=a^{\pm il}$, so also
$|k|=|l|$ and therefore $s^{\epsilon_1}h_0a^kh_0^{-1}s^{-\epsilon_1}
=t^{-1}b^{\pm k}t=a^{\pm k}$. Thus again we see that we move between
$a^{\pm k}$ and $h^{-1}b^{\pm k}h$ as we conjugate, but if the latter
is a power of $a$ then it can only be $a^{\pm k}$ as above.
Hence if $g\in I_G(a)$
then $ga^kg^{-1}$ can only equal $a^{\pm k}$.
\end{proof}
We can now apply the above results to graphs of groups, but in order to
use these repeatedly we need to know how conjugacy works in these cases.

\section[Conjugacy in graphs of groups]{Conjugacy in graphs of balanced groups 
with infinite cyclic edge groups}
We assume the usual definition and standard facts about a finite graph
of groups $G(\Gamma)$ with fundamental group $G$ where the underlying graph
$\Gamma$ has vertices $V(\Gamma)$ and (unoriented) edges $E(\Gamma)$.
We write $G_v$ for the vertex group at $v\in V(\Gamma)$, whereas on taking
an edge $e\in E(\Gamma)$ and giving it an orientation so that it travels
from the vertex $v_1$ to $v_2$ (where possibly $v_1=v_2$), we write
$G_{e^-}$ for the inclusion of the edge group in $G_{v_1}$ and $G_{e^+}$
for the inclusion into $G_{v_2}$.
We also assume here that all vertex groups are torsion free and all edge
groups are infinite cyclic. We define a vertex element $g_v\in G_v$ of
$G(\Gamma)$ to be an element that actually lies in some particular
vertex group, not 
just one that is conjugate into it. (Strictly speaking this is not well
defined but it is once a maximal subtree of $\Gamma$ is specified.) It seems
we need to know when two vertex elements are
conjugate in $G$; in fact it turns out we only ever need to know
when they have powers that are conjugate
in $G$. The most obvious way in which this could hold is if they lie
in the same vertex group and have powers that are conjugate in this
vertex group. Our next definition incorporates what we regard as the
second most obvious way.
\begin{defn} \label{path}
Given a graph of groups $G(\Gamma)$ with torsion free vertex groups and
infinite cyclic edge groups, along with vertex elements $g\in G_v$ and
$g'\in G_{v'}$, we define a {\bf conjugacy path} $p$ from $g$ to $g'$ to be an
oriented non empty
edge path $v=v_0,v_1,\ldots ,v_n=v'\in V(\Gamma)$ traversed by edges
$e_1,\ldots ,e_n\in E(\Gamma)$ for $n\geq 1$ with $e_i$
given the orientation from $v_{i-1}$ to $v_i$ such that the following
conditions hold:\\
(1) Some power of $g$ is conjugate in $G_{v_0}$ into the edge subgroup
$G_{e_1^-}$ of $G_{v_0}$.\\
(2) For each $i=1,\ldots ,n-1$, some conjugate in $G_{v_i}$ of the edge
subgroup $G_{{e_i}^+}$ intersects the edge subgroup $G_{e_{i+1}^-}$ non
trivially.\\
(3) Some power of $g'$ is conjugate in $G_{v_n}$ into the edge subgroup
$G_{e_n^-}$ of $G_{v_n}$.\\
We say that the conjugacy path $p$ is {\bf reduced} and/or {\bf closed} if the
underlying edge path is reduced and/or closed.
\end{defn}
Note: Every conjugacy path gives rise to a unique non empty edge path, but
conversely suppose we have a non empty edge path from $v$ to $v'$ 
and elements $g,g'$ in $G_v$ and $G_{v'}$ respectively.
Then we say this edge path {\bf induces} a conjugacy path from $g$ to
$g'$ if (1), (2) and (3) all hold.
 
If a conjugacy path exists from $g$ to $g'$
then it is clear that some power of $g$ and some
power of $g'$ are conjugate in $G$: certainly a power of $g$ is conjugate
into $G_{e_1}^-$ which is either equal to or conjugate in $G$ to $G_{e_1^+}$,
depending on whether we form an amalgamated free product or HNN extension
over the edge $e_1$. But some element of $G_{e_1^+}$ is conjugate in $G$
into
$G_{e_2^-}$, and by taking higher powers if necessary we can assume that
this element is conjugate in $G$ to a power of $g$. 
We then continue in this way
until we reach an element of $G_{e_n^+}$, and some power of this will be
conjugate to a power of $g'$.

Our next step is to show that in order to establish the existence of
conjugacy paths between two vertex elements, we need only use
reduced paths. This is certainly not the case if the edge groups are not
infinite cyclic: for instance consider the amalgamated free product
$G=A*_CX$ where $A$ is free on $a,b$ and $X$ is free on $x,y$,
with $C$ also a rank 2 free group and the amalgamation defined by
identifying $a^2$ with $x$ and $b^2$ with $yxy^{-1}$. Then $a^2$ and
$b^2$ are conjugate in $G$ but not in $A$, though any conjugacy path
establishing this will need to leave $A$ and then return, 
thus will not be reduced. 
In fact there is some literature on conjugacy in graphs of groups, even
specialising in the case where edge groups are infinite cyclic. However
this is usually geared towards the conjugacy problem or conjugacy
separability. We have not seen the following results elsewhere, perhaps
because they are only concerned with conjugacy of unspecified powers, rather
than the elements themselves. Furthermore it will be shown not only that
we need just consider reduced paths but that it is enough to consider paths
that never pass through the same edge twice. This is important for
applications because it means that there are only ever finitely many
such paths to check. 
\begin{prop} \label{red}
If there exists a conjugacy path from $g\in G_{v_0}$ to $g'\in G_{v_n}$ then
either $v_0=v_n$ and $g,g'$ have powers which are conjugate in this vertex 
group, or
we can take the underlying edge path to be reduced: indeed we can assume that 
this edge path only traverses any unoriented edge at most once.
\end{prop}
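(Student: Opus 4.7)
My plan is to argue by strong induction on the length $n$ of the conjugacy path. The base case $n=1$ is immediate since a single edge path is reduced and uses one edge once. For the inductive step take $n\ge 2$ and assume the underlying edge path visits some unoriented edge at two positions $i<j$ (this covers both a backtrack, where $j=i+1$ with $e_{i+1}=\overline{e_i}$, and genuine repetition). I aim to produce a strictly shorter induced conjugacy path from $g$ to $g'$, to which the inductive hypothesis applies, or else to land in the first alternative $v_0=v_n$ with $g$ and $g'$ having conjugate powers in $G_{v_0}$.

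The engine is a composition principle, essentially Lemma~\ref{lem1}(i) iterated, which depends crucially on the edge groups being infinite cyclic: if $\langle a\rangle,\langle b\rangle,\langle c\rangle$ are infinite cyclic subgroups of a torsion free group $H$ and $g_1,g_2\in H$ satisfy $g_1\langle a\rangle g_1^{-1}\cap\langle b\rangle\neq\{e\}$ and $g_2\langle b\rangle g_2^{-1}\cap\langle c\rangle\neq\{e\}$, then $(g_2g_1)\langle a\rangle(g_2g_1)^{-1}\cap\langle c\rangle\neq\{e\}$; writing $g_1a^kg_1^{-1}=b^l$ and $g_2b^mg_2^{-1}=c^n$ gives $g_2g_1\,a^{km}(g_2g_1)^{-1}=c^{nl}$, which is non-trivial since $a,c$ have infinite order and $k,l,m,n$ are non-zero. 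This is exactly what lets me excise a piece of the edge path and reinstate condition (2) (or condition (1) or (3) at the boundary) at the new junction.

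Two cases arise from how $e_i$ and $e_j$ share their unoriented edge. If the orientations agree, then $v_{i-1}=v_{j-1}$ and $v_i=v_j$, and I excise the loop $e_{i+1},\ldots,e_j$ to leave $e_1,\ldots,e_i,e_{j+1},\ldots,e_n$; the only new instance of condition (2) is at $v_i=v_j$, and it coincides with the original condition (2) at $v_j$ because $G_{e_i^+}=G_{e_j^+}$. If instead $e_j=\overline{e_i}$, then $v_{i-1}=v_j$ and $v_i=v_{j-1}$, and I excise the sub-path $e_i,\ldots,e_j$; the new condition (2) at $v_{i-1}=v_j$ follows from the composition principle applied to $G_{e_{i-1}^+}$, $G_{e_i^-}=G_{e_j^+}$ and $G_{e_{j+1}^-}$ using the old conditions (2) at $v_{i-1}$ and $v_j$. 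The boundary sub-cases $i=1$ and $j=n$ are handled identically, with the composition principle restoring the new condition (1) or (3) in place of (2). In the one situation where excision leaves no edges at all, namely the opposite-orientation boundary case $i=1$, $j=n$ (forcing $v_0=v_n$), conditions (1) and (3) together with the identification $G_{e_1^-}=G_{e_n^+}$ directly give that $g$ and $g'$ have conjugate powers in $G_{v_0}$, delivering the first alternative.

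The main obstacle is really just the bookkeeping across these sub-cases (same versus opposite orientation, interior versus boundary position, and the fully excised case); in each one the composition principle supplies whichever of (1), (2), (3) needs verifying on the shorter path, and then induction finishes. The infinite cyclicity of the edge groups is used essentially only in this step, and the conclusion genuinely fails without it, as the free-product example just before the proposition demonstrates.
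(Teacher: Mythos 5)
Your proof is correct and follows essentially the same route as the paper: split into the cases where the repeated unoriented edge is traversed with the same or opposite orientation, excise the corresponding segment, use the transitivity of ``some conjugate of one infinite cyclic subgroup meets another'' to restore conditions (1)--(3) at the new junction, and observe that the fully excised case lands in the first alternative of the statement. The only cosmetic differences are that you formalise the paper's ``repeat until done'' as an induction on path length and state the composition principle explicitly.
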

\begin{proof}
With the notation in Definition \ref{path}, suppose that
$e_r$ and $e_s$ (for $r<s$) in the underlying edge path $p$ 
are the same edge, running
from $v_{r-1}$ to $v_r$. Then some conjugate in $G_{v_r}$ of the edge 
subgroup $G_{e_r^+}=G_{e_s^+}$ intersects both edge subgroups
$G_{e_{r+1}^-}$ and $G_{e_{s+1}^-}$ non trivially. In particular some
conjugate in $G_{v_r}$ of $G_{e_r^+}$ intersects $G_{e_{s+1}^-}$. Thus we can
remove from $p$ the edges $e_{r+1},\ldots ,e_s$ which run from $v_r$ back
to itself and we still have a conjugacy path from $g$ to $g'$. We now
continue until we have removed all such repeats.

If though $e_s$ is the reverse of $e_r$, running backwards
from $v_r$ to $v_{r-1}$ then the argument is similar
but now we have $G_{e_r^-}$ equal to $G_{e_s^+}$ in $G_{v_{r-1}}$ so
that there are conjugates in $G_{v_{r-1}}$ of $G_{e_{r-1}^+}$ and of
$G_{e_{s+1}^-}$ which both intersect $G_{e_r^-}=G_{e_s^+}$.
This time we can cut out the edges $e_r,\ldots ,e_s$ and still have a
conjugacy path from $g$ to $g'$, unless we have cut out the whole path.
In this case the edge $e_r$ is actually $e_1$ from $v_0$ to $v_1$ and
the edge $e_s=e_n$ is the reverse of $e_1$,
with a power of $g$ (respectively $g'$)
conjugate in $G_{v_0}$ (respectively $G_{v_n}$ which is equal to $G_{v_0}$)
into $G_{e_1^-}$ (respectively $G_{e_n^+}$ which is equal to $G_{e_1^-}$).
Thus there are powers of $g$ and $g'$ which
are already conjugate in $G_{v_0}$.  
\end{proof}
 
We can now consider when there exist powers of two vertex elements which are
conjugate in the fundamental group of a graph of torsion free groups
with infinite cyclic edge groups.
We start when the graph is a tree, though we first note the conjugacy
theorem for amalgamated free products in \cite{mks}. Here we only require a
partial version which can easily be proven by using reduced forms as in
the previous section.
\begin{prop} \label{cnjam}
Let $G=A*_CB$ be an amalgamated free product and let $g\in A$.\\
If $g'\in A\cup B$ and $g,g'$ are conjugate in $G$ but $g$ is
not conjugate in $A$ into $C$ then $g'\in A$
with $g,g'$ conjugate in $A$.
\end{prop}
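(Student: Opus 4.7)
The plan is to use the standard reduced form argument for amalgamated free products, paralleling the case analysis carried out in the proof of Theorem \ref{amal}. Let $h \in G$ satisfy $h g h^{-1} = g'$, and write $h = h_r \cdots h_1$ in reduced form, with the factors alternating between $A \setminus C$ and $B \setminus C$. If $h \in C$ then $h \in A$ and we are done immediately, so I may assume $r \geq 1$. I would also note at the outset that the hypothesis forces $g \in A \setminus C$, since $g$ is not conjugate in $A$ into $C$.

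The argument splits according to whether $h_1$ lies in $A \setminus C$ or $B \setminus C$. In the first case, $h_1 g h_1^{-1}$ is an element of $A$, and it cannot lie in $C$, for otherwise $g$ would be conjugate in $A$ into $C$. Thus $h_1 g h_1^{-1} \in A \setminus C$. If $r = 1$, then $h \in A$ and we are finished: $g' \in A$ with $g$ and $g'$ conjugate in $A$. If $r \geq 2$, then $h_2 \in B \setminus C$, so no cancellation occurs at the next layer, and
\[
h_r \cdots h_2 (h_1 g h_1^{-1}) h_2^{-1} \cdots h_r^{-1}
\]
is a reduced expression of length $2r - 1 \geq 3$, hence does not represent an element of $A \cup B$, contradicting $g' \in A \cup B$.

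In the second case $h_1 \in B \setminus C$, the expression $h_1 g h_1^{-1}$ is itself already reduced of length three because $g \in A \setminus C$. Successive conjugations by $h_2, h_3, \ldots, h_r$, which alternate back into $A \setminus C$ and then $B \setminus C$, introduce no cancellation, producing a reduced expression of length $2r + 1 \geq 3$, which again cannot equal an element of $A \cup B$. Hence $h_1 \in A \setminus C$ and $r = 1$ is the only surviving possibility, and the claim follows.

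The only subtle point, and hence the main thing to verify carefully, is that no pinch can appear at any stage of the conjugation: this reduces precisely to the observation that $h_1 g h_1^{-1} \notin C$ when $h_1 \in A \setminus C$, which is exactly where the non-conjugacy hypothesis on $g$ is used. Beyond that it is just routine bookkeeping with reduced forms, so I expect no real obstacle.
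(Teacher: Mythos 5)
Your argument is correct and is exactly the reduced-form argument the paper has in mind: the paper does not write out a proof of Proposition \ref{cnjam}, stating only that it ``can easily be proven by using reduced forms as in the previous section,'' and your case analysis (with the non-conjugacy hypothesis used precisely to rule out the pinch $h_1gh_1^{-1}\in C$) is the standard way to carry that out.
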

\begin{thm} \label{cnjtr}
Let $G(\Gamma)$ be a finite graph of torsion free groups with
infinite cyclic edge groups and where $\Gamma$ is a tree.  
Suppose we have two vertex elements $g\in G_v$ and $g'\in G_{v'}$. Then
some power of $g$ is conjugate in $G$ to some power of $g'$ 
if and only if either\\
(i) The vertices $v,v'$ are equal and some power of $g$ is conjugate in 
$G_v$ to some power of $g'$.\\
(ii) The vertices $v,v'$ are distinct and the unique reduced path in $\Gamma$
from $v$ to $v'$ induces a conjugacy path from $g$ to $g'$.
\end{thm}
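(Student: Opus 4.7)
My plan is to prove the backward direction directly: in case (i) conjugacy of powers in $G_v\leq G$ gives conjugacy in $G$, and in case (ii) the discussion following Definition \ref{path} already unpacks a conjugacy path into an explicit chain of $G$-conjugacies between successive powers of the vertex elements encountered. For the forward direction I will proceed by induction on the number of edges of $\Gamma$; the base case of no edges is immediate since then $G=G_v=G_{v'}$. For the inductive step, since $\Gamma$ is a tree with at least one edge it has a leaf vertex $u$ attached by a leaf edge $e$ to an adjacent vertex $w$. I set $\Gamma'=\Gamma\setminus\{u,e\}$ with fundamental group $G'$ and present $G=G'*_{\langle c\rangle}G_u$, where $\langle c\rangle\leq G_w\leq G'$ is the inclusion of the edge group of $e$; Proposition \ref{cnjam} will be the workhorse throughout, pushing conjugacy across this splitting back into the two factors up to the qualifier that a power may have been conjugated into $\langle c\rangle$.

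I then split into three cases by where $v$ and $v'$ lie. If $v=v'=u$, Proposition \ref{cnjam} applied with $A=G_u$ and $B=G'$ (and using $G'\cap G_u=\langle c\rangle$) forces either that powers of $g,g'$ are conjugate directly in $G_u$, or that each of $g,g'$ has a power conjugate in $G_u$ into the infinite cyclic $\langle c\rangle$; in the latter sub-case, taking common multiples of the two exponents yields powers of $g,g'$ conjugate in $G_u$ to a common power of $c$, hence to each other, giving (i). If $v=u$ and $v'\in V(\Gamma')$ (the symmetric case is identical), the same proposition first forces a power of $g$ to be conjugate in $G_u$ into $\langle c\rangle$, which is exactly condition (1) of Definition \ref{path} at the edge $e$; applying the proposition a second time (this time inside the factor $G'$) further gives that a power of $g'$ is conjugate in $G'$ into $\langle c\rangle$, so powers of $g'$ and $c$ are conjugate in $G'$. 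The induction hypothesis applied to the pair $(c,g')$ in $\Gamma'$ then gives either $w=v'$ with powers conjugate in $G_w$ (which supplies condition (3) and makes the single edge $e$ work), or a reduced conjugacy path in $\Gamma'$ from $w$ to $v'$; prepending $e$ to the latter gives the unique reduced path in $\Gamma$ from $u$ to $v'$, with the transition condition (2) at $w$ coming from condition (1) of the inductively produced sub-path.

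In the remaining case $v,v'\in V(\Gamma')$, Proposition \ref{cnjam} yields either that powers of $g,g'$ are already conjugate in $G'$, or (after a second application of the proposition in the opposite direction) that both $g$ and $g'$ have powers conjugate in $G'$ into $\langle c\rangle$; in the second sub-case a transitivity argument through a common power of $c$ inside $G'$ again produces powers of $g,g'$ conjugate in $G'$. Either way the induction hypothesis applied to $\Gamma'$ finishes the argument, because the reduced $v$-to-$v'$ path agrees in the subtree $\Gamma'$ and in $\Gamma$. The main obstacle I anticipate is the bookkeeping of Proposition \ref{cnjam}: each application requires separate treatment of the sub-case that a power of the relevant element is conjugate in the ambient factor into $\langle c\rangle$, and it is precisely here that the hypothesis of infinite cyclic edge groups is used, since two elements with powers in a common infinite cyclic subgroup automatically admit a common power after conjugation within the ambient factor. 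A secondary concern is ensuring that the inductively produced reduced paths fit together cleanly with the leaf edge $e$ to yield the unique reduced $v$-to-$v'$ path in $\Gamma$, which is immediate since $\Gamma'$ is a subtree and $u$ is a leaf.
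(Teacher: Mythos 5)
Your argument is correct, and it rests on the same two pillars as the paper's: induction on the number of edges of the tree, with Proposition \ref{cnjam} used at each stage to push a $G$-conjugacy of powers back into the factors of an amalgam, together with the observation (which is exactly where infinite cyclic edge groups enter) that two elements with powers conjugate into a common infinite cyclic subgroup automatically have conjugate powers. Where you genuinely differ is in the choice of splitting for the forward direction of (ii). You always peel off a leaf edge $e$ at a leaf $u$, split $G=G'*_{\langle c\rangle}G_u$, run a three-way case analysis on the location of $v,v'$, and \emph{construct} the conjugacy path by concatenating $e$ with an inductively obtained sub-path, checking that condition (2) at the junction vertex $w$ is supplied by condition (1) of the sub-path (and condition (3) by case (i) of the sub-induction when $w=v'$). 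The paper instead argues by contradiction: it verifies condition (1) (respectively (3)) by cutting the tree at the first (respectively last) edge of the $v$-to-$v'$ path, and verifies condition (2) by cutting at the edge immediately following the first vertex where it fails, in each case using Proposition \ref{cnjam} plus the inductive hypothesis on the resulting subtree to contradict the assumed conjugacy of powers in $G$. Your version buys a uniform inductive step and an explicit construction of the path, at the cost of the concatenation bookkeeping; the paper's version avoids the case split by localising the putative failure, at the cost of being non-constructive and needing a separate argument for each of the three conditions in Definition \ref{path}. Both are complete proofs, and your case $v=v'$ coincides with the paper's treatment of that case (which also removes a leaf).
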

\begin{proof}
We have seen that
these conditions are sufficient for conjugacy so we prove necessity by
induction on the number of edges in $\Gamma$, with Proposition \ref{cnjam}
being our base case: if $G=A*_CB$ has one edge and $g,g'$ are both in $A$
with a power of $g$ conjugate in $G$ to a power of $g'$ then either these
powers are conjugate in $A$, or they are both conjugate in $A$ into $C$ and so
further powers of $g$ and $g'$ are conjugate in $A$ to each other anyway. 
Alternatively if $g\in A$ but $g'\in B$ and $g^i$ is conjugate in $G$ to $g'^j$
then $g^i$ is conjugate in $A$ into $C$ and $g'^j$ conjugate in $B$ into $C$,
giving us our conjugacy path from $g$ to $g'$.

In the general case we again suppose that 
some power of $g$ is conjugate in $G$ to some power of $g'$. First suppose that
$v=v'$. We remove a leaf vertex $w\neq v$ in
$\Gamma$ and its adjoining edge $e_w$ from $\Gamma$ to form the tree
$\Gamma_w$. Now we can form the fundamental group $A$ of the graph of
groups given by the tree $\Gamma_w$, which means that $G=A*_CG_w$, where
$C$ is the infinite cyclic edge group $G_{e_w}$. Now as $v\in\Gamma_w$
we have that $g,g'\in A$. But, as in the base case,
if both a power of $g$ and a power of $g'$
are conjugate in $A$ into $C$ then some power of $g$ is conjugate in $A$
to some power of $g'$. If not then $g$ say has no power conjugate in $A$
into $C$, in which case we have by Proposition \ref{cnjam} that any
element of $A$ which is
conjugate in $G=A*_CB$ into $\langle g\rangle$ must already be
conjugate in $A$. Either way we are now conjugate back in $A$ so the induction
holds to obtain conjugates of a power of $g$ and a power of $g'$ back in
$G_v$.

To prove (ii), first suppose that no power of $g$ is conjugate in $G_v$
into the edge inclusion $G_{{e_1}^-}$. We similarly remove the edge $e_1$ from
$\Gamma$ to form trees $\Gamma_0,\Gamma_1$ and the amalgamated free
product $A*_CB$, where $A$ is the fundamental group of the graph of groups
obtained from $\Gamma_0$, $B$ from $\Gamma_1$ and $C=G_{e_1^-}$.
Hence all powers of
$g$ are in $A$ and all powers of $g'$ are in $B$, but we can only have the
element $g^i$ of $A$ conjugate to an element of $B$
if $g^i$ is conjugate in $A$ into $C=\langle c\rangle$. As the graph
$\Gamma_0$ is a tree, we can use induction to determine if a power of
$g\in G_v$ is conjugate in $A$ to a power of $c\in G_v$, whereupon we
see this occurs if and only if these powers were already conjugate in $G_v$.

 We then argue in the same way if no power of $g'$ is conjugate in $G_{v'}$
into the subgroup $G_{e_n^+}$ of $G_{v'}$. Otherwise the only way that
(ii) can fail is if somewhere along this edge path joining $v$ to $v'$,
say at the vertex $v_i$, we have that no conjugate of $G_{e_i^+}$ in
$G_{v_i}$ meets $G_{e_{i+1}^-}$, and we suppose that this is the first
time it occurs. We remove the edge $e_{i+1}$ giving us the decomposition
$G=A*_CB$ where $C$ embeds as $G_{e_{i+1}^-}$ in $A$ and as $G_{e_{i+1}^+}$
in $B$. Now some power of $g$ is this time conjugate in $G_{v_0}$ (for
$v_0=v$) into $G_{e_1^-}$, and hence by following the path from $v_0$ up to
$v_i$, we have that a power of $g$ is conjugate in $A$ into the edge
inclusion $G_{e_i^+}$. Now suppose that some other power of $g$ is conjugate
in $G$ into $\langle g'\rangle\leq B$. By Proposition \ref{cnjam} this
can only happen if this new power of $g$ is conjugate in $A$ into
$C=G_{e_{i+1}^-}$. But this would force a further power of $g$ to be conjugate
in $A$ both into $G_{e_i^+}$ and into $G_{e_{i+1}^-}$, hence $G_{e_i^+}$
can be conjugated in $A$ to meet $G_{e_{i+1}^-}$, and hence by induction
also in $G_{v_i}$ which is a contradiction.  
\end{proof}

We now need to provide a similar result for general graphs of groups. Thus
we require the equivalent version of Proposition \ref{cnjam} for HNN
extensions, usually known as Collins' criterion. Again we just require
the simplified version below which can be verified with reduced forms.
\begin{prop} \label{cnjhnn}
Suppose that $G$ is an 
HNN extension of $H$ with stable letter $t$ and associated
subgroups $A,B$ of $H$ so that $tAt^{-1}=B$ and let $g\in H$ be conjugate
in $G$ to $g'\in H$. Then\\
If $g$ is not conjugate in $H$ into $A\cup B$ then nor is $g'$ and
$g$ is conjugate in $H$ to $g'$.
\end{prop}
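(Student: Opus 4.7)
My plan is to apply the standard Britton-style reduced-form analysis, just as in the proof of Proposition \ref{hnnp}. I would start by taking $x \in G$ with $xgx^{-1} = g'$ and try to force $x \in H$, since once this is established the conjugacy of $g$ and $g'$ inside $H$ is immediate, and the other assertion of the proposition then follows because conjugacy in $H$ preserves the property of being $H$-conjugate into $A \cup B$.

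Accordingly I would write $x$ in reduced form $x = h_r t^{\epsilon_r}\cdots h_1 t^{\epsilon_1} h_0$ with $r \geq 0$ and no pinches, and assume for contradiction that $r \geq 1$. Forming
\[
xgx^{-1} = h_r t^{\epsilon_r}\cdots t^{\epsilon_1}(h_0 g h_0^{-1})t^{-\epsilon_1}\cdots t^{-\epsilon_r} h_r^{-1},
\]
I would observe that this expression is palindromic in its $t$-exponents and that the $x$-part on the left and the $x^{-1}$-part on the right are already reduced subwords, so the only place where a new pinch could appear is at the innermost pair $t^{\epsilon_1}(\cdot)t^{-\epsilon_1}$. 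Such a pinch would force $h_0 g h_0^{-1} \in A$ (if $\epsilon_1 = +1$) or $h_0 g h_0^{-1} \in B$ (if $\epsilon_1 = -1$), either of which exhibits $g$ as $H$-conjugate into $A \cup B$, contradicting the hypothesis.

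With the innermost pinch blocked, the displayed word will be genuinely reduced of $t$-length $2r \geq 2$, and Britton's lemma will then prevent it from lying in $H$. Since it equals $g' \in H$, this is the desired contradiction, forcing $r = 0$ and hence $x \in H$.

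The one point I would want to verify carefully is that ruling out the innermost pinch really does leave the whole palindromic word reduced, i.e.\ that no new pinch can open up further out from the centre. This is the closest thing to an obstacle, but it is automatic: the letters beyond the centre are just the letters of $x$ and $x^{-1}$, which form reduced words by choice, so no fresh cancellation can propagate outward until the centre collapses first. Beyond that bookkeeping the argument is routine.
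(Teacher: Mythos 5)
Your proof is correct and is exactly the argument the paper has in mind: the paper does not write out a proof of Proposition \ref{cnjhnn} but simply remarks that it ``can be verified with reduced forms'', and your Britton's-lemma analysis of the palindromic word $xgx^{-1}$, with the only possible pinch at the centre blocked by the hypothesis on $g$, is precisely that verification.
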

\begin{thm} \label{cnjgr}
Let $G(\Gamma)$ be a finite graph of torsion free groups with
infinite cyclic edge groups.  
Suppose we have two vertex elements $g\in G_v$ and $g'\in G_{v'}$. Then
some power of $g$ is conjugate in $G$ to some power of $g'$ 
if and only if either:\\
(i) The vertices $v,v'$ are equal and some power of $g$ is conjugate in 
$G_v$ to some power of $g'$.\\
(ii) There exists some conjugacy path from $g$ to
$g'$ (where closed paths with $v=v'$ are allowed). 
\end{thm}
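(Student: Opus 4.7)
The plan is to deduce necessity directly from the action of $G$ on the associated Bass--Serre tree $\widetilde T$, since sufficiency was already established immediately after Definition \ref{path}. Recall that $G$ acts on $\widetilde T$ with quotient $\Gamma$, and vertex (respectively edge) stabilizers are $G$-conjugates of the corresponding vertex (respectively edge) groups. After fixing a fundamental domain I identify $G_v$ with $\mathrm{Stab}(\widetilde v)$ for a distinguished lift $\widetilde v$ of each $v\in V(\Gamma)$, so the vertex element $g\in G_v$ fixes $\widetilde v$, and similarly $g'$ fixes a distinguished lift $\widetilde{v}'$ of $v'$.

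Now suppose $g^i = w (g')^j w^{-1}$ in $G$ for some nonzero $i,j$ and some $w\in G$. Then $g^i$ fixes both $\widetilde v$ and $w\widetilde{v}'$, and since the fixed point set of any element acting on a tree is convex, $g^i$ pointwise fixes the geodesic $\gamma$ from $\widetilde v$ to $w\widetilde{v}'$ in $\widetilde T$. If $\gamma$ has length zero then $\widetilde v = w\widetilde{v}'$, which forces $v = v'$ (by projection to $\Gamma$) and $w\in\mathrm{Stab}(\widetilde v)=G_v$, so that $g^i$ and $(g')^j$ are conjugate in $G_v$, giving conclusion (i). Otherwise $\gamma$ traverses edges $\widetilde e_1,\ldots,\widetilde e_n$ between successive vertices $\widetilde v = \widetilde v_0,\widetilde v_1,\ldots,\widetilde v_n = w\widetilde{v}'$, whose projection is an edge path $v=v_0,\ldots,v_n=v'$ via oriented edges $e_1,\ldots,e_n$ in $\Gamma$, and I claim this is the underlying edge path of a conjugacy path from $g$ to $g'$.

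To verify condition (1) of Definition \ref{path}, observe that $\mathrm{Stab}(\widetilde e_1)$ is a subgroup of $\mathrm{Stab}(\widetilde v_0)=G_v$ which, by the standard parametrization of edges of $\widetilde T$ incident to $\widetilde v_0$, equals $\alpha G_{e_1^-}\alpha^{-1}$ for some $\alpha\in G_v$; since $g^i$ lies in $\mathrm{Stab}(\widetilde e_1)$, a power of $g$ is conjugate in $G_v$ into $G_{e_1^-}$. Condition (3) follows symmetrically at the other end, applied to $(g')^j = w^{-1}g^i w$ which pointwise fixes the translated geodesic $w^{-1}\gamma$ based at $\widetilde{v}'$. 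For condition (2), at each intermediate $\widetilde v_k$ the stabilizer has the form $\beta_k G_{v_k}\beta_k^{-1}$ for some $\beta_k\in G$, and the stabilizers of the two edges $\widetilde e_k,\widetilde e_{k+1}$ of $\gamma$ incident to $\widetilde v_k$ have the form $\beta_k\gamma_k G_{e_k^+}\gamma_k^{-1}\beta_k^{-1}$ and $\beta_k\delta_k G_{e_{k+1}^-}\delta_k^{-1}\beta_k^{-1}$ for some $\gamma_k,\delta_k\in G_{v_k}$; both contain the nontrivial element $g^i$, so the inner intersection $\gamma_k G_{e_k^+}\gamma_k^{-1}\cap\delta_k G_{e_{k+1}^-}\delta_k^{-1}$ is nontrivial inside $G_{v_k}$, which after conjugating by $\delta_k^{-1}$ is exactly the assertion that a conjugate of $G_{e_k^+}$ in $G_{v_k}$ meets $G_{e_{k+1}^-}$ nontrivially.

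The main technical subtlety is this bookkeeping at the intermediate vertices: since stabilizers of vertices of $\widetilde T$ are only specified up to outer conjugation in $G$, the outer conjugating factor $\beta_k$ must be absorbed in order to phrase condition (2) purely inside $G_{v_k}$. This is straightforward once one uses the explicit Bass--Serre description of edges of $\widetilde T$ incident to a given vertex, and no new ideas are needed beyond Bass--Serre theory and the convexity of tree fixed-point sets. An alternative route would be to induct on the number of non-tree edges relative to a chosen maximal subtree, using Theorem \ref{cnjtr} as the base case and Proposition \ref{cnjhnn} (Collins' criterion) to handle each HNN layer, but this requires an additional iterative argument tracking how many stable-letter conjugations are needed to move $g^i$ to $(g')^j$, which is less conceptually transparent than the Bass--Serre approach above.
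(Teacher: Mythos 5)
Your argument is correct, but it takes a genuinely different route from the paper. The paper proves necessity by induction on the edges of $\Gamma$ outside a maximal tree: the tree case (Theorem \ref{cnjtr}) is itself an induction using the conjugacy criterion for amalgamated products (Proposition \ref{cnjam}), and each further edge is handled as an HNN layer via Collins' criterion (Proposition \ref{cnjhnn}), splicing the conjugacy path through the new edge onto paths obtained inductively in the smaller graph of groups. You instead argue directly on the Bass--Serre tree: $g^i=w(g')^jw^{-1}$ forces $g^i$ to fix the geodesic from $\widetilde v$ to $w\widetilde v'$ pointwise, and the projection of that geodesic, together with the coset parametrization of edges incident to a vertex, yields conditions (1)--(3) of Definition \ref{path} (note you are implicitly reading condition (3) with $G_{e_n^+}$ in place of the $G_{e_n^-}$ printed in the definition, which is clearly the intended reading given how Proposition \ref{red} uses it). Your approach is shorter and more uniform --- it dispenses with the tree/HNN case split and with Propositions \ref{cnjam} and \ref{cnjhnn} entirely, and the absorption of the outer conjugators $\beta_k$ that you flag is indeed routine. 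What the paper's elementary, normal-form-based induction buys is that its edge-by-edge build-up of $G$ is exactly the scaffolding reused later (in Theorems \ref{bal}, \ref{precor} and \ref{hyp}), where one needs not just the existence of a conjugacy path but control over how the conjugating element decomposes across the successive amalgams and HNN extensions; your geodesic argument establishes the stated theorem cleanly but would need to be supplemented to extract that finer bookkeeping.
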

\begin{proof}
Again these conditions clearly imply conjugacy so we suppose that a power
of $g$ and a power of $g'$ are conjugate in $G$.
We have our result when $\Gamma$ is a tree, so we now take a maximal tree $T$
in $\Gamma$ and argue by induction on the number of remaining edges, with the
base case being Theorem \ref{cnjtr}.

Suppose on removing from $\Gamma$ an edge $e$
not in $T$ we are left with the
connected graph $\Delta$, giving rise to the graph of groups $H(\Delta)$
so that $G$ is formed from $H$ by an HNN extension with associated subgroups
$A=G_{e^-}\leq G_w\leq H$ and 
$G_{e^+}\leq G_{w'}\leq H$, where $e$ runs from the vertex
$w$ to $w'$ (where we could have $w=w'$).
Again we ask: is this power of $g$ conjugate in $H$ into $A\cup B$? If not
then our powers of $g$ and $g'$ must be conjugate in $H$ by Proposition
\ref{cnjhnn}, and so we can inductively use the criterion of conjugacy in
$H(\Delta)$ instead. Otherwise this power is conjugate in $H$ to
some element $a$ say of $A$ without loss of generality, and we must also
have in this case that our power of $g'$ is conjugate in $H$ into either
$A$ or into $B$ by Proposition \ref{cnjhnn} with $g$ and $g'$ swapped.
If it is $A=G_{e^-}$ then as before some other power of
$g$ will be conjugate in $H$ to some power of $g'$ so that we are back
with the inductive statement for $H(\Delta)$. 

If however this power of
$g'$ is conjugate in $H$ to $b\in B$ say then we can at least
use the induction to say that there is a conjugacy path in $H(\Delta)$,
and hence in $G(\Gamma)$,
from $g$ to $a$ which joins the vertices $v$ and $w$, as well as a
conjugacy path from $g'$ to $b$ joining $v'$ and $w'$ (or we have conjugacy
of powers within the relevant vertex groups).
But as the edge
$e$ induces a conjugacy path in $G(\Gamma)$ from $a$ to $b$, we can put these
together to get one from $g$ to $g'$.
\end{proof}

\section{Graphs of balanced groups}
We can now put together the results of the previous two sections.
\begin{defn} Suppose that $G(\Gamma)$ is
a finite graph of balanced, torsion free groups with
infinite cyclic edge groups.
We say a reduced, closed, conjugacy path $e_1,\ldots ,e_n$ 
from $g\in G_v$ to $g'$ in the
same vertex group $G_v$, where $g$ generates the edge group inclusion
$G_{e_1^-}\leq G_v$ and $g'$ generates $G_{e_n^+}\leq G_v$   
is {\bf complete} if some powers of $g$ and $g'$
are themselves conjugate in $G_v$.
If so then, as the existence of the conjugacy path implies that
some power of $g$ is conjugate in $G$ to some power of $g'$, we obtain
two powers $g^i,g^j$ of $g$ which are conjugate in $G$.
We say that our complete conjugacy path is {\bf level}
if $|i|=|j|$. This is well defined as all vertex groups are balanced and it
does not depend on which power $g^i$ we initially
take provided it can be conjugated all the way round the conjugacy path.
\end{defn}
\begin{prop} \label{redcmp}
Given a finite graph $G(\Gamma)$ of balanced, torsion free groups with
infinite cyclic edge groups and a complete conjugacy path from $g\in G_v$
to $g'\in G_v$, there exists a complete conjugacy path which passes through
every non oriented edge of $\Gamma$ at most once, though possibly
starting (and thus ending) at a different vertex. If our original conjugacy
path is not level then we can arrange that this new path is not level either.
\end{prop}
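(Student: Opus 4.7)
The plan is to induct on the length of the underlying edge path, repeatedly applying the cutting procedure from the proof of Proposition \ref{red}. If some unoriented edge is traversed twice, say as $e_r$ and $e_s$ with $r<s$, then Proposition \ref{red} removes the appropriate intermediate subpath (namely $e_{r+1},\ldots,e_s$ in the same-orientation case or $e_r,\ldots,e_s$ in the opposite case) and produces a shorter closed edge path $P'$ still inducing a conjugacy path from $g$ to $g'$, except in the degenerate subcase where the entire path collapses. Since the vertex $v$ and the elements $g,g'$ are unchanged, and completeness is a property of $g,g'$ alone (namely that some powers of them are conjugate in $G_v$), $P'$ remains a complete conjugacy path.

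To handle the non-level requirement, I observe that the excised sub-loop $L$ is itself a complete closed conjugacy path, based at the vertex $v_r$ (respectively $v_{r-1}$ in the opposite-orientation case), between generators of two edge subgroups at that vertex whose powers are conjugate there, precisely by the intersection condition at this vertex that was supplied by the original path. The walking-around conjugators in $G$ attached to $P$, $P'$ and $L$ satisfy a composition relation in $G$; combined with the facts that $\Delta_g^G$ is invariant on conjugacy classes of $I_G(g)$ (since $\Q^*$ is abelian) and that commensurable infinite-order elements share a common modulus by the remark following Lemma \ref{lem1}, this gives the multiplicative identity $|\mu(P)|=|\mu(P')|\cdot|\mu(L)|$, where $|\mu|$ denotes the level ratio $|i/j|$ associated to a complete conjugacy path. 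Hence if $|\mu(P)|\neq 1$ then at least one of $P'$ or $L$ is also not level, and I continue the induction with whichever retains non-levelness, possibly relocating the basepoint to a vertex of the excised loop. The degenerate subcase (in which $e_1$ and $e_n$ represent the same edge in opposite directions and the entire path is excised) is handled by cyclically re-parametrising $P$ to start at an intermediate vertex $v_i$ with $0<i<n$; the intersection conditions inherited from $P$ ensure that the rotated path is still a complete conjugacy path, and non-levelness is preserved because rotation merely conjugates the global conjugating element.

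Each non-degenerate step strictly reduces the total number of oriented edges, and re-parametrisation does not increase it, so the process must terminate at a complete conjugacy path using each unoriented edge at most once, and still non-level if $P$ was. The main obstacle is verifying the multiplicative decomposition $|\mu(P)|=|\mu(P')|\cdot|\mu(L)|$: the subtlety is that $L$ naturally lives at a different basepoint than $P$, so its modular contribution is phrased in terms of an element only commensurable with $g$ rather than $g$ itself. One uses Lemma \ref{lem1}(ii),(iii) and the comment following it to identify the moduli of commensurable elements as subsets of the common value group $\Q^*$, which is what licenses comparing and multiplying the $\mu$-values of $P'$ and $L$ within a single copy of $\Q^*$.
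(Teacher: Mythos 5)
Your proposal is correct and in its main line is essentially the paper's own argument: the paper likewise applies the cutting procedure of Proposition \ref{red}, observes that the cut produces two closed, reduced, complete conjugacy paths (your $P'$ and $L$, with completeness of $L$ coming exactly from condition (2) of the original path at the splice vertex), and disposes of levelness with the remark that if both pieces are level then so was the original path --- which is precisely the contrapositive of your identity $|\mu(P)|=|\mu(P')|\cdot|\mu(L)|$; your appeal to Lemma \ref{lem1} and the common modulus of commensurable elements simply makes explicit what the paper leaves implicit. The one point where you genuinely differ is the degenerate subcase in which $e_1$ and $e_n$ are reverses of one another and the whole path would be excised: the paper handles this head-on, noting that the inner path $e_2,\ldots,e_{n-1}$ is non-empty (complete conjugacy paths are reduced) and must inherit the non-levelness, since the two traverses of that edge and the closing conjugacy inside the balanced group $G_v$ contribute ratio $\pm 1$; you instead rotate the basepoint to an interior vertex $v_i$. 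The rotation is legitimate --- completeness at $v_i$ is exactly condition (2) of the original path there, and the level ratio is a cyclically invariant product of local ratios --- but with one small inaccuracy: the rotated path contains the backtrack formed by $e_n$ followed by $e_1$, so it is not reduced and hence not literally a complete conjugacy path in the paper's sense; your procedure must immediately perform one further cut (now interior, hence non-degenerate) to remove this backtrack, after which the induction proceeds and terminates as you describe.
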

\begin{proof}
Apply the method in the proof of Proposition \ref{red}, so that if
$e_r$ and $e_s$ (for $r<s$) are the same oriented
edge then we create two conjugacy
paths $e_1,\ldots ,e_r,e_{s+1},\ldots ,e_n$ and $e_{r+1},\ldots ,e_s$ which
are both closed, reduced and complete. If both paths are level then on putting
these together, so was the original path.

The same argument works if $e_s$ is the reverse of $e_r$ for $r$ minimal
with this property and this will create the two new paths
$e_1,\ldots ,e_{r-1},e_{s+1},\ldots ,e_n$ and $e_{r+1},\ldots ,e_{s-1}$,
except that the first edge path might be empty if $r=1$ and $s=n$ (though
the second will never be as complete conjugacy paths are reduced).
But if so then the fact that the original path is
not level implies that the second path is not either.
\end{proof}
\begin{thm} \label{bal}
Given a finite graph $G(\Gamma)$ of balanced, torsion free groups with
infinite cyclic edge groups, we have that the fundamental group $G$
is not balanced if and only if there exists a complete non level
conjugacy path in $G(\Gamma)$. By Proposition \ref{redcmp} 
this path can be taken to pass through
any unoriented edge at most once and so, as the underlying edge path will
determine a complete conjugacy path including the initial and final
elements (or their inverses), there are only finitely many paths to check.
\end{thm}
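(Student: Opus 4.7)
The plan is to prove both directions. The ``if'' direction is essentially built into the definitions: a complete non-level conjugacy path produces, by its very construction, two powers $g^i, g^j$ of the initial vertex element $g$ that are conjugate in $G$ with $|i|\neq|j|$, so $g$ is an unbalanced element of $G$.

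For the ``only if'' direction, I would start with an unbalancing witness $g_0 x^m g_0^{-1} = x^n$ in $G$ where $|m|\neq|n|$ and $x$ has infinite order. First reduce to the case where $x$ lies in some vertex group: acting on the Bass--Serre tree, a hyperbolic element $x$ has positive translation length $\ell$, and since $x^k$ then has translation length $|k|\ell$ (a conjugacy invariant), $x^m$ conjugate to $x^n$ would force $|m|=|n|$, a contradiction. So $x$ is elliptic and, after conjugation, lies in some vertex group $G_v$. The elements $x^m$ and $x^n$ are then both in $G_v$ and $G$-conjugate, but cannot be $G_v$-conjugate (as $G_v$ is balanced), so Theorem \ref{cnjgr}(ii) provides a conjugacy path in $G(\Gamma)$ from $x^m$ to $x^n$; by Proposition \ref{red} the underlying edge path $e_1,\ldots,e_n$ can be taken reduced.

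The key move is to reinterpret this as a complete conjugacy path between generators of the boundary edge subgroups. Let $a,b\in G_v$ generate $G_{e_1^-}$ and $G_{e_n^+}$ respectively. The same edge sequence induces a (reduced, closed) conjugacy path from $a$ to $b$, since condition (2) depends only on edges while (1) and (3) are trivial for generators of the boundary edge subgroups. Completeness requires that some powers of $a$ and $b$ are $G_v$-conjugate: unpacking conditions (1) and (3) for the original path yields $u,w\in G_v$ and non-zero integers $p,q,r,s$ with $u x^{mp}u^{-1}=a^q$ and $w x^{nr}w^{-1}=b^s$; raising gives that $a^{qnr}$ and $b^{smp}$ are both $G_v$-conjugate to $x^{mpnr}$, hence $G_v$-conjugate to each other, so the path is complete.

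For non-levelness, combine the above with the unbalancing relation raised to the $pr$-th power: $g_0 x^{mpr} g_0^{-1}=x^{npr}$ yields $(wg_0u^{-1})a^{qr}(wg_0u^{-1})^{-1}=b^{sp}$, so $a^{qr}$ is $G$-conjugate to $b^{sp}$. The recipe in the definition of a complete conjugacy path then combines the $G_v$-conjugacy $a^{qnr}\sim b^{smp}$ with the $G$-conjugacy $a^{qr}\sim b^{sp}$ (raised to the $smp$-th and $nr$-th powers respectively) to produce the pair $a^{qnr\cdot sp}$ and $a^{qr\cdot smp}$, both $G$-conjugate to $b^{qsmpr n}$ and thus to each other; their absolute exponents $|qnrsp|$ and $|qsrmp|$ differ precisely because $|m|\neq|n|$, so the path is non-level. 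The finiteness statement is then an immediate application of Proposition \ref{redcmp}. The main obstacle is not conceptual but bookkeeping: carefully tracking the exponents through the $G_v$-conjugation (which uses only that $G_v$ is balanced) and through the $G$-conjugation (which uses the unbalancing witness $g_0$) so that the ratio $|m|/|n|$ emerges as the obstruction to being level.
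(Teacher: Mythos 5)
Your ``if'' direction is fine and matches the paper. The ``only if'' direction, however, has two genuine gaps. First, the appeal to Theorem \ref{cnjgr} does not deliver a conjugacy path. That theorem says ``some power of $g$ is conjugate in $G$ to some power of $g'$'' holds if and only if (i) \emph{or} (ii) holds, where (i) is that \emph{some} power of $g$ is conjugate in $G_v$ to \emph{some} power of $g'$. With $g=x^m$ and $g'=x^n$ both powers of the same element $x$, condition (i) is satisfied vacuously (e.g.\ $(x^m)^n=(x^n)^m$), so the disjunction tells you nothing about (ii): you cannot conclude that a conjugacy path exists. What you actually need is a statement that traces the \emph{specific} conjugating element $g_0$ through a normal form and shows that its action is accounted for by a conjugacy path; Theorem \ref{cnjgr} as stated does not do this for two powers of one element.

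Second, even granting the path, your non-levelness argument does not establish that \emph{this path} is non-level. You correctly derive that $a^{i}$ and $a^{j}$ are $G$-conjugate with $|i|\neq|j|$, but the $G$-conjugacy $a^{qr}\sim b^{sp}$ you use comes from the element $wg_0u^{-1}$, which is not known to act ``along the path''; the definition of level refers specifically to the exponents obtained by conjugating around the path. When $a$ is not unimodular in $G$ the exponent ratio of a conjugation between powers of $a$ and powers of $b$ genuinely depends on the conjugating element (this is exactly the modular homomorphism $\Delta_a^G$ being nontrivial), so the path you built could be level while $g_0$ witnesses unbalancedness by a different route --- in which case you have only re-derived that $G$ is unbalanced, not exhibited a non-level path. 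Closing both gaps is essentially the content of the paper's proof, which argues contrapositively: assuming every complete conjugacy path is level, it builds $G$ up from a maximal tree edge by edge and uses the normal-form preservation results (Theorem \ref{amal} for amalgams, Proposition \ref{hnnp} and Theorem \ref{hnnt} for HNN extensions, fed by Theorems \ref{cnjtr} and \ref{cnjgr}) to show balancedness survives each step. That inductive normal-form analysis is precisely what guarantees that \emph{every} conjugation of powers is realised by some path, which is the fact your direct argument silently assumes.
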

\begin{proof}
Clearly a complete non level conjugacy path gives rise to an unbalanced
element, 
thus let us assume all complete conjugacy paths are level
and again take a maximal tree $T$ of $\Gamma$. On forming the graph
of groups $H(T)$ we have that this fundamental group $H$ is balanced
by Corollary \ref{trco}.   
Now suppose that $\Gamma\setminus T$ consists of the edges
$e_1,\ldots ,e_n$. On inserting the edge $e_1$ to form the graph of
groups $G_1(\Gamma_1)$, we have that $G_1$ is an HNN extension
$\langle H,t_1\rangle$. Let us suppose that $e_1$ runs from the vertex
$v_1$ to $w_1$, so that the associated subgroups of this HNN extension
are $\langle a\rangle=G_{e_1^-}$ in $G_{v_1}$ and 
$\langle b\rangle=G_{e_1^+}$ in $G_{w_1}$ with $t_1at_1^{-1}=b$. By
Proposition \ref{hnnp} we have that $G_1$ is still balanced if no conjugate
in $H$ of $\langle b\rangle$ intersects $\langle a\rangle$ non trivially, so
now suppose that there is $h\in H$ so that $ha^ih^{-1}=b^j$. By Theorem
\ref{cnjtr} applied to $H(T)$, this can only happen if our relation is
obtained from a reduced
conjugacy path in $H(T)$ from $a$ to $b$ that joins $v_1$
to $w_1$, or if $v_1=w_1$ and there is $\gamma\in G_v$ with 
$\gamma a^k\gamma^{-1}=b^l$. But then on adding the edge $e_1$ with stable
letter $t_1$, we have a conjugacy path from $a$ back to itself
where $h^{-1}t_1$ (or $\gamma^{-1}t_1$) conjugates $a^j$ to $a^i$ 
(or $a^l$ to $a^k$) and which is reduced and therefore complete.
Thus if $|j|\neq |i|$ (or $|l|\neq |k|$)
then this conjugacy path is not level and $G_1$, which will
be a subgroup of $G$, is not balanced. If however $|i|=|j|$ (or $|k|=|l|$)
then the path is level and $G_1$ is balanced by Theorem \ref{hnnt}.

We can then form further HNN extensions $G_2,G_3,\ldots $ by adding an
edge each time until we reach the fundamental group $G$ and the above
argument applies at each stage, but in place of
Theorem \ref{cnjtr} we use Theorem \ref{cnjgr} as well as Proposition \ref{red}
to ensure any conjugacy path used is reduced.
\end{proof}
In particular, for any graph of groups as in this theorem which has a complete
non level conjugacy path, the resulting fundamental group cannot be
subgroup separable and cannot be linear over $\Z$.
Moreover this fundamental group cannot lie in any of the classes of
balanced groups given in Section 2, even if all the vertex groups lie in
such a class.
In \cite{wsox} Theorem 5.1 it was shown using geometric means
that a finite graph of groups with all vertex
groups free and all edge groups infinite cyclic is subgroup separable
if and only if it is balanced. It is also mentioned that, as a consequence
of this proof, the unbalanced case can be seen from the graph of groups
and the corresponding description fits with Theorem \ref{bal}.

\section{Presence of Baumslag Solitar groups}
Although we saw in Section 3 
that an unbalanced torsion free group need not contain a
Baumslag Solitar group, we would now like
conditions on the vertex groups of a graph of torsion free groups which
ensure that if the fundamental group is unbalanced then it contains a
non Euclidean Baumslag Solitar subgroup. Although we do not show this
for all torsion free groups, we would at least like to ensure that it
holds when the vertex groups are all word hyperbolic or all free abelian,
or even any combination of these. The following condition allows a
considerable generalisation of this case.
\begin{defn} We say a torsion free group $G$ has the {\bf cohomological
condition} if whenever we have $a,b$ non trivial elements of $G$ such that
$\langle a\rangle\cap\langle b\rangle$ is non trivial, the subgroup
$\langle a,b\rangle$ of $G$ has cohomological dimension at most 2.
\end{defn}
\begin{prop} The following groups have the cohomological condition.\\
(0) Subgroups of groups with the cohomological condition\\
(i) Groups of cohomological dimension at most 2\\
(ii) Torsion free word hyperbolic groups\\
(iii) More generally, torsion free groups which are CT\\
(iv) Torsion free abelian groups, or more generally torsion free
nilpotent groups\\
(v) More generally still, groups that are residually (torsion free nilpotent)\\
(vi) Torsion free groups which are 
relatively hyperbolic with respect to subgroups that satisfy the 
cohomological condition.
\end{prop}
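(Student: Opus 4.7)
The plan is to reduce every case to one of two observations about torsion free groups: the cohomological condition and the bound $\mathrm{cd}\leq 2$ are both inherited by subgroups, which instantly disposes of (0) and (i); and a $2$-generator torsion free abelian group is $\Z$ or $\Z^2$, hence of cohomological dimension at most $2$. Thus for (ii) through (v) it will suffice to prove the stronger statement that $\langle a,b\rangle$ is abelian whenever $\langle a\rangle\cap\langle b\rangle\neq\{e\}$.

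For (ii) and (iii), since torsion free word hyperbolic groups are CT I would just treat the CT case. Writing $a^m=b^n\neq e$, the pair $a,a^m$ commutes, as does $a^m=b^n,b$, so by commutative transitivity $a$ and $b$ commute, exactly as in the proof of Proposition \ref{ctid}. For (iv), root uniqueness in a torsion free nilpotent group $G$ gives $C_G(a^m)=C_G(a)$: if $ga^mg^{-1}=a^m$ then $(gag^{-1})^m=a^m$ forces $gag^{-1}=a$. Since $b$ commutes with $a^m=b^n$, it lies in $C_G(a^m)=C_G(a)$, so $a$ and $b$ commute.

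Case (v) is the main obstacle. The plan is to push the nilpotent argument through a residual reduction: if $[a,b]\neq e$, pick a homomorphism $\varphi\colon G\to N_1$ onto a torsion free nilpotent group with $\varphi([a,b])\neq e$, and a (possibly different) $\psi\colon G\to N_2$ with $\psi(a^m)\neq e$. The diagonal map $G\to N_1\times N_2$ lands in a torsion free nilpotent group in which both $[a,b]$ and $a^m$ survive, so (iv) applied to the images of $a$ and $b$ yields that they commute, contradicting the survival of $[a,b]$. The mildly delicate point is this simultaneous survival, but it is immediate once one observes that finite direct products of torsion free nilpotent groups remain in the class.

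Finally (vi) follows from Lemma \ref{hypr}. The condition $\langle a\rangle\cap\langle b\rangle\neq\{e\}$ says $b\in I_G(a)$. If $a$ is hyperbolic then $I_G(a)$ is cyclic, making $\langle a,b\rangle$ cyclic of cohomological dimension at most $1$; otherwise $I_G(a)\leq\gamma P\gamma^{-1}$ for some conjugate of a peripheral subgroup $P$, so $\langle a,b\rangle\leq\gamma P\gamma^{-1}$, and the cohomological condition which $P$ satisfies by hypothesis, and hence so does $\gamma P\gamma^{-1}$, gives the required bound when applied to the pair $(a,b)$.
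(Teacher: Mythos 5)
Your argument is correct in all cases, and for (0), (i), (ii), (iii) and (vi) it is essentially the paper's proof: the paper also handles (ii) via the CT case with exactly your commutative-transitivity chain (as in Proposition \ref{ctid}), and proves (vi) from Lemma \ref{hypr}, taking $g=a^r=b^s$ where you take $g=a$ (the same intersector by Lemma \ref{lem1}(ii)). Where you genuinely diverge is in (iv) and (v). For (iv) the paper quotes the dichotomy that a finitely generated torsion free nilpotent group is trivial, $\Z$, or surjects onto $\Z^2$ (deduced from Gruenberg's residual finite-$p$ theorem) and notes that the relation $a^r=b^s$ rules out a surjection onto $\Z^2$, so $\langle a,b\rangle$ is cyclic; you instead use unique extraction of roots in torsion free nilpotent groups (Mal'cev) to get $C_G(a^m)=C_G(a)$ and hence that $a$ and $b$ commute. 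For (v) the paper pushes the same $\Z^2$-surjection dichotomy through a single torsion free nilpotent quotient in which $[a,b]$ survives, whereas you use the diagonal map into $N_1\times N_2$ to keep $[a,b]$ and $a^m$ alive simultaneously and then reduce formally to (iv); as you observe, this requires the class to be closed under finite direct products, which it is. Each route rests on one classical fact about torsion free nilpotent groups (unique roots for you, the $\Z^2$-surjection dichotomy for the paper), and both correctly deliver the needed bound $\mathrm{cd}\,\langle a,b\rangle\leq 2$, so the difference is one of packaging rather than substance.
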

\begin{proof}
(0) is obvious, as is (i) by Shapiro's Lemma that a subgroup cannot increase
cohomological dimension. For (iii) if we have $a,b\in G$ with powers $a^r=b^s$
then $a$ and $b$ commute with this element, thus with each other and so
$\langle a,b\rangle$ is torsion free abelian, thus is $\Z$.
For (iv) we recall that a finitely generated torsion free nilpotent group
is either trivial, $\Z$ or it surjects to $\Z^2$ (for instance it is residually
finite-$p$ for every prime $p$ by a result of Gruenberg, thus it must surject
$(C_p)^2$ or be cyclic) and being torsion free nilpotent is preserved on
passing to subgroups. But if $a,b\in G$ with powers $a^r$ equal to $b^s$
then $\langle a,b\rangle$ cannot surject to $\Z^2$. For (v), being 
residually (torsion free nilpotent) is also preserved by subgroups, so
if $G$ is residually (torsion free nilpotent), thus torsion free, and we
have elements $a,b$
in $G$ with $a^r=b^s$ but $\langle a,b\rangle$ is not abelian
(otherwise it is $\Z$) then $\langle a,b\rangle$ surjects to a non abelian
torsion free nilpotent group. This must surject to $\Z^2$ thus so does
$\langle a,b\rangle$.

Finally if $G$ is relatively hyperbolic with respect to subgroups having
this condition and we have elements
$a,b$ of $G$ with $a^r=b^s$ then it follows
immediately from Lemma \ref{hypr} on taking
$g=a^r=b^s$ that either $\langle a,b\rangle$ is infinite cyclic or
it can be conjugated into a peripheral subgroup. 
\end{proof}
Groups of cohomological dimension 2 need not be balanced, thus groups
which are relatively hyperbolic with respect to these need not be either, 
but we saw earlier
that groups in all the other categories will be, apart from torsion free
CT groups where the exceptions were identified in Proposition \ref{ctid}.
 
We will want to show that if all vertex groups have this property and
are balanced then an unbalanced graph of groups with infinite cyclic
edge groups contains a Baumslag Solitar subgroup that is non Euclidean. 
We start by mentioning a couple of well known lemmas: the first following
from Mayer-Vietoris considerations and the second from the usual use
of reduced forms.
\begin{lem} \label{co2}
If $G(\Gamma)$ is a finite graph of groups with all vertex groups having
cohomological dimension at most 2 and all edge groups are infinite
cyclic then $G$ has cohomological dimension at most 2.
\end{lem}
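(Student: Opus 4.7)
The plan is to build the fundamental group $G$ up one edge of $\Gamma$ at a time, and at each stage apply the Mayer--Vietoris long exact sequence in group cohomology associated either to an amalgamated free product or to an HNN extension, with identified subgroup $\cong\Z$ of cohomological dimension $1$. Concretely, I would fix a maximal subtree $T\subseteq\Gamma$ and first assemble the fundamental group $H$ of the sub graph of groups $G(T)$ by successive amalgamated free products (starting from any vertex group and adjoining one new vertex via its incident edge at a time), and then adjoin the edges of $\Gamma\setminus T$ one by one as HNN extensions, terminating at $G$.

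At the amalgamated step $G'=A*_{C}B$, with $\mathrm{cd}(A),\mathrm{cd}(B)\leq 2$ already known and $C\cong\Z$, and for an arbitrary $\Z G'$-module $M$, I would write down
\[\cdots\to H^{n-1}(C,M)\to H^{n}(G',M)\to H^{n}(A,M)\oplus H^{n}(B,M)\to H^{n}(C,M)\to\cdots\]
and observe that for $n\geq 3$ every flanking term vanishes: $H^{n-1}(C,M)=0$ because $n-1\geq 2>\mathrm{cd}(C)$, and $H^{n}(A,M)=H^{n}(B,M)=0$ because $n>2$. This forces $H^{n}(G',M)=0$ for all $n\geq 3$, so $\mathrm{cd}(G')\leq 2$. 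At each HNN step $G'=H*_{C}$ one uses the analogous sequence
\[\cdots\to H^{n-1}(C,M)\to H^{n}(G',M)\to H^{n}(H,M)\to H^{n}(C,M)\to\cdots,\]
and exactly the same vanishing argument gives $\mathrm{cd}(G')\leq 2$. Induction on the number of edges of $\Gamma$ then yields $\mathrm{cd}(G)\leq 2$.

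I do not expect a serious obstacle: once the two Mayer--Vietoris sequences are in hand, the dimension bound on the edge groups does all the work, and the only bookkeeping is checking that each intermediate group really is an amalgamated free product or HNN extension over $\Z$, which is automatic from Bass--Serre theory applied to the partial subgraphs. As a cleaner but less elementary alternative, one could instead apply the single long exact sequence
\[\cdots\to\bigoplus_{v}H^{n-1}(G_v,M)\to\bigoplus_{e}H^{n-1}(G_e,M)\to H^{n}(G,M)\to\bigoplus_{v}H^{n}(G_v,M)\to\cdots\]
arising from the short exact sequence of $\Z G$-modules obtained by equivariantly decomposing the Bass--Serre tree into vertices and edges, and conclude directly that $H^{n}(G,M)=0$ for $n\geq 3$; but the edge-by-edge induction is more in the spirit of the Mayer--Vietoris hint.
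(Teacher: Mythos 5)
Your argument is correct and is exactly the proof the paper has in mind: the paper gives no details, simply stating that the lemma "follows from Mayer--Vietoris considerations," and your edge-by-edge induction using the Mayer--Vietoris sequences for amalgamated products and HNN extensions over the cohomological-dimension-one edge groups fills in precisely that outline. No gaps to report.
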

\begin{lem} \label{subha}
(i) If $G=A*_CB$ is an amalgamated free product and we have subgroups
$A'\leq A$ and $B'\leq B$ which both contain $C$ then the subgroup
$\langle A',B'\rangle$ of $G$ can be expressed naturally as the amalgamated
free product $A'*_CB'$.\\
(ii) Suppose that $G=\langle H,t\rangle$ is an HNN extension with base $H$, 
stable letter $t$ and associated subgroups $A,B$ such that $tAt^{-1}=B$.\\ 
If we have a subgroup $R$ of $H$ which contains both $A$ and $B$ then
the subgroup $\langle R,t\rangle$ of $G$ is naturally
the HNN extension with base $R$, stable letter $t$ and associated subgroups
$A,B$ such that $tAt^{-1}=B$.\\
If we have subgroups $J,L$ of $H$ with $J$ containing $A$ and $L$ containing
$B$ then the subgroup $\langle tJt^{-1},L\rangle$ of $G$ can be
naturally expressed as the amalgamated free product 
$tJt^{-1}*_{tAt^{-1}=B}L$.
\end{lem}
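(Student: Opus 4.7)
The plan is to reduce every statement to the standard normal form theorems for amalgamated free products and HNN extensions. In each case the inclusions of the named subgroups of $G$ satisfy the defining relations of the abstract construction on the right, so the appropriate universal property furnishes a homomorphism $\phi$ from that abstract group onto the subgroup of $G$ generated by the named subsets. Only injectivity of $\phi$ requires work, and the strategy is to show that any reduced expression in the domain remains reduced (in the ambient sense) after application of $\phi$, and hence cannot collapse to the identity in $G$.

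For (i), given a non-trivial reduced word $g_1 g_2 \cdots g_r$ in $A' *_C B'$ with successive factors alternating in $(A' \setminus C) \cup (B' \setminus C)$, the inclusions $A' \setminus C \subseteq A \setminus C$ and $B' \setminus C \subseteq B \setminus C$ mean it is automatically a reduced expression in $A *_C B = G$, which is therefore non-trivial. For the first half of (ii), an HNN-reduced word $r_0 t^{\epsilon_1} r_1 \cdots t^{\epsilon_n} r_n$ relative to the base $R$ and associated subgroups $A, B$ has no pinch, and the no-pinch condition is phrased entirely in terms of $A$ and $B$, which are the same associated subgroups as for $G$; so the same word is reduced in $G$.

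The only genuinely subtle case is the second half of (ii). Take a non-trivial reduced word $w_1 w_2 \cdots w_r$ in $tJt^{-1} *_{B} L$ with the $w_i$ alternating between $(tJt^{-1}) \setminus B$ and $L \setminus B$. Each factor of the first type may be written $t j_i t^{-1}$ with $j_i \in J$; the condition $tj_i t^{-1} \notin B = tAt^{-1}$ forces $j_i \in J \setminus A$. Substitution produces a word in $G$ whose $t$-pattern consists of alternating blocks $t \cdot j_i \cdot t^{-1}$ interleaved with $L$-factors $\ell_k$. Every subword of the form $t \cdot h \cdot t^{-1}$ that appears has $h = j_i \in J \setminus A$, so is not a pinch, and every subword $t^{-1} \cdot h \cdot t$ has $h = \ell_k \in L \setminus B$, so again is not a pinch. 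Thus the resulting expression is an HNN-reduced form of positive length in $G$ and represents a non-identity element. (If $w$ is the trivial product, or a single factor, injectivity on that part is immediate.)

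The main obstacle is precisely this bookkeeping in the second half of (ii): one must observe that the amalgamating subgroup $B$ on the abstract side corresponds under $\phi$ to $tAt^{-1}$ in the HNN data, and that the hypotheses $A \subseteq J$ and $B \subseteq L$ are exactly what is needed to guarantee $j_i \notin A$ and $\ell_k \notin B$, so that no pinch is introduced when the amalg-reduced expression is rewritten as an HNN-reduced expression. Everything else is routine application of the normal form theorems.
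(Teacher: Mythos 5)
Your proof is correct and is essentially the argument the paper has in mind: the paper gives no proof of this lemma, saying only that it follows ``from the usual use of reduced forms,'' and your argument is precisely that standard normal-form argument, including the one point that needs care (checking that an amalgam-reduced word in $tJt^{-1}*_{tAt^{-1}=B}L$ rewrites to a pinch-free HNN word because $j_i\notin A$ and $\ell_k\notin B$).
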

We now transfer the cohomological property from the vertex groups
to the fundamental group of the graph of groups.
\begin{thm} \label{precor}
Let $G(\Gamma)$ be a finite graph of groups with all vertex groups satisfying
the cohomological condition and all edge groups infinite cyclic. Suppose
we have a conjugacy path between vertex elements $a,b$ of $G$, thus
providing us with an element $g\in G$ and $i,j\neq 0$ such that 
$ga^ig^{-1}=b^j$. Then 
$\langle gag^{-1},b\rangle$ has cohomological dimension at most 2.
\end{thm}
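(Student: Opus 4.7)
The plan is to construct a subgroup $K$ of $G$ containing $\langle gag^{-1},b\rangle$ which arises naturally as the fundamental group of a finite sub-graph-of-groups of $G(\Gamma)$ whose vertex groups all have cohomological dimension at most $2$ and whose edge groups are still infinite cyclic. Then Lemma \ref{co2} gives that $K$ has cohomological dimension at most $2$, and since cohomological dimension does not increase on passage to a subgroup (a projective resolution over $\Z K$ restricts to one over $\Z H$ for any $H\le K$), $\langle gag^{-1},b\rangle$ inherits cohomological dimension at most $2$ as required.

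To build $K$ I would first unpack the conjugacy path $e_1,\ldots ,e_n$ through vertices $v_0,\ldots ,v_n$ into its vertex-level data using Definition \ref{path}: conditions (1)--(3) produce elements $\gamma_k\in G_{v_k}$ such that $\gamma_0$ conjugates a power of $a$ into $G_{e_1^-}\le G_{v_0}$, each intermediate $\gamma_k$ conjugates a power of $G_{e_k^+}$ into $G_{e_{k+1}^-}$ inside $G_{v_k}$, and $\gamma_n$ conjugates a power of $b$ into $G_{e_n^+}$. By Proposition \ref{red} I may take the underlying edge path to traverse each unoriented edge of $\Gamma$ at most once. Threading these $\gamma_k$ together with the stable letters of any non-tree edges realises $g$ in $G$ as a word of the form $g=\gamma_n^{-1}s_n\gamma_{n-1}^{-1}\cdots s_1\gamma_0$. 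At each vertex $v_k$ the two cyclic edge subgroups witnessing condition (2) (or (1), (3) at the endpoints) have a non-trivial common power, so the cohomological condition on $G_{v_k}$ yields a two-generator subgroup $A_k\le G_{v_k}$ of cohomological dimension at most $2$ containing (suitable conjugates of) both the incoming and outgoing edge generators; at the endpoints I include $a$ in $A_0$ and $b$ in $A_n$, which already lie in the subgroup singled out by the condition.

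Taking the $A_k$ as vertex groups together with the original infinite cyclic edge groups yields the desired sub-graph-of-groups over the reduced underlying path. Iterated application of Lemma \ref{subha}, one edge at a time, realises its fundamental group $K$ as a genuine subgroup of $G$ (the amalgam parts cover tree edges, the HNN parts cover the non-tree edges whose stable letters appear in $g$). By construction $K$ contains every $\gamma_k$ and every stable letter $s_k$, hence $g$ itself, together with $a\in A_0$ and $b\in A_n$, so $\langle gag^{-1},b\rangle\le K$; Lemma \ref{co2} applied to the defining small graph of groups then finishes the argument.

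The main obstacle I expect is aligning the two edge inclusions at each edge of the small graph so that Lemma \ref{subha} genuinely applies: the cohomological condition naturally supplies vertex subgroups containing $\gamma_k$-\emph{conjugates} of the edge generators, not the edge generators on the nose. One therefore has to pre-conjugate the chosen $A_k$ along the path by a running product of the $\gamma_k^{\pm 1}$ so that the two inclusions at every edge coincide inside $G$; the $K$ that comes out is then a $G$-conjugate of the subgroup that falls out literally from the definitions, but cohomological dimension is conjugation-invariant. A secondary subtlety is that the underlying edge path may revisit a vertex, forcing the use of the HNN part of Lemma \ref{subha} at that step in place of the amalgam part, but the two parts of the lemma run in parallel.
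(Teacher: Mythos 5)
Your proposal is correct and follows essentially the same route as the paper: the paper likewise builds the required subgroup edge by edge as an iterated amalgam/HNN extension of the two-generator cohomological-dimension-2 vertex subgroups supplied by the cohomological condition, using Lemma \ref{subha} to identify the splitting, Lemma \ref{co2} to bound the dimension, Proposition \ref{red} to traverse each non-tree edge once, and running conjugations by the vertex-level conjugators to align the edge inclusions exactly as in your ``main obstacle'' paragraph. The one slip is your claim that $K$ contains every $\gamma_k$ and hence $g$ itself --- it need not, since its vertex groups are generated only by conjugates of edge generators --- but this is harmless because, as in the paper, one only needs the final subgroup to contain a single $G$-conjugate of the pair $gag^{-1}$, $b$, and cohomological dimension is conjugation-invariant.
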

\begin{proof}
In the proof that follows, cohomological dimension 2 will actually stand
for cohomological dimension at most 2.
We first assume that $\Gamma$ is a tree $T$, so that we are in the same
set up as Theorem \ref{cnjtr} Case (ii). Then we have our edge path
$e_1,\ldots ,e_n$ running from $A=G_{v_0}$ containing $a$ to
$B=G_{v_n}$ containing $b$, with the edge $e_k$ running from the vertex
$v_{k-1}$ with vertex group $G_{v_{k-1}}$ to $v_k$ with vertex group
$G_{v_k}$. We set $\langle f_k\rangle$ equal to the edge group
$G_{e_k}$ and use this notation for its image in both of the
neighbouring vertex groups.

Writing out in order the conjugation equalities that hold
in each vertex group, we obtain
\[g_0a^{i_1}g_0^{-1}=f_1^{j_1},g_1f_1^{i_2}g_1^{-1}=f_2^{j_2},
\ldots,g_{n-1}f_{n-1}^{i_n}g_{n-1}^{-1}=f_n^{j_n},g_nf_n^{i_{n+1}}g_n^{-1}=
b^{j_{n+1}}\]
where $g_k$ is the conjugating element in the vertex group $G_{v_k}$
so that $g=g_ng_{n-1}\ldots g_1g_0$,
$f_k\in G_{v_{k-1}}\cap G_{v_k}$ and $i_1,\ldots ,i_{n+1}$ and
$j_1,\ldots ,j_{n+1}$ are integers such that $i_1i_2\ldots i_{n+1}=i$
and $j_1j_2\ldots j_{n+1}=j$.
Let us set $A_0$ to be the subgroup 
$\langle g_0ag_0^{-1},f_1\rangle$ of
$G_{v_0}$ and $B_0$ to be the subgroup $\langle f_1,g_1^{-1}f_2g_1\rangle$
of $G_{v_1}$. By the cohomological condition on the vertex subgroups and
the conjugation equalities above, both $A_0$ and $B_0$ have cohomological
dimension 2, thus $\langle g_0ag_0^{-1},f_1\rangle *_{\langle f_1\rangle}
\langle f_1,g_1^{-1}f_2g_1\rangle$ does too. But by Lemma \ref{subha} (i)
this is equal to the subgroup 
$H_1=\langle g_0ag_0^{-1},f_1,g_1^{-1}f_2g_1\rangle$
of $S_1=G_{v_0}*_{\langle f_1\rangle} G_{v_1}$, so the subgroup
$\langle g_1g_0a(g_1g_0)^{-1},f_2\rangle$ of $g_1H_1g_1^{-1}$ also has
cohomological dimension 2. We can now amalgamate this with the cohomological
dimension 2 subgroup $\langle f_2,g_2^{-1}f_3g_2\rangle$ of $G_{v_2}$ over the
subgroup $\langle f_2\rangle$ which results in a cohomological dimension
2 subgroup of 
$S_2=g_1S_1g_1^{-1}*_{\langle f_2\rangle}G_{v_2}$ and so on, until we
conclude that $\langle g_{n-1}\ldots g_1g_0a(g_{n-1}\ldots g_1g_0)^{-1},
g_n^{-1}bg_n\rangle$ has cohomological dimension 2, thus so does the
conjugate subgroup $\langle gag^{-1},b\rangle$. 

Now we move to the case where the graph $\Gamma$ is not a tree, so we
take a maximal tree $T$ in $\Gamma$ with fundamental group $H$.
Again we have a conjugacy path
from $a$ to $b$ and we are in the
same situation as for the tree except that this time we might
walk over edges $e_k$ that are not in $T$, in which case our
conjugating equality $g_{k-1}f_{k-1}^{i_k}g_{k-1}^{-1}=
f_k^{j_k}$ is still the same, but the following
conjugating equality which was previously
$g_kf_k^{i_{k+1}}g_k^{-1}=
f_{k+1}^{j_{k+1}}$ is now
$g_ktf_k^{i_{k+1}}t^{-1}g_k^{-1}=f_{k+1}^{j_{k+1}}$
for $t$ the relevant stable letter (or the inverse thereof) associated to
the edge $e_k$. However we know that we only walk over
such an edge once by Proposition \ref{red}.

Let us start by supposing that $e_k$ is the first
edge walked over in $\Gamma\setminus T$.
The situation now is that we would already know the subgroup
\[\langle g_{k-1}\ldots g_1g_0a(g_{k-1}\ldots g_1g_0)^{-1},f_k\rangle\]
of $H$ has cohomological dimension 2 by the same argument as for the tree.
Now we can use our edge $e_k$ to form the HNN extension 
$G_1=\langle H,t\rangle$, although this time the edge subgroups will be
written as $\langle f_k\rangle\leq G_{v_{k-1}}$ and 
$\langle tf_kt^{-1}\rangle\leq G_{v_k}$, so that  
 the conjugate subgroup
$\langle tg_{k-1}\ldots g_1g_0a(g_{k-1}\ldots g_1g_0)^{-1}t^{-1},
tf_kt^{-1}\rangle$ has cohomological dimension 2 as well.

We also consider the subgroup
\[H_k=\langle tg_{k-1}\ldots g_1g_0a(g_{k-1}\ldots g_1g_0)^{-1}t^{-1},
tf_kt^{-1}, g_k^{-1}f_{k+1}g_k\rangle,\] 
of $G_1$ which is equal to
\[\langle tg_{k-1}\ldots g_1g_0a(g_{k-1}\ldots g_1g_0)^{-1}t^{-1},
tf_kt^{-1}\rangle*_{\langle tf_kt^{-1}\rangle}\langle tf_kt^{-1}, 
g_k^{-1}f_{k+1}g_k\rangle\] by Lemma \ref{subha} (ii) where
\[A=\langle f_k\rangle,\,B=\langle tf_kt^{-1}\rangle,\,
J=\langle g_{k-1}\ldots g_1g_0a(g_{k-1}\ldots g_1g_0)^{-1},
f_k\rangle,\] and $L=\langle tf_kt^{-1}, 
g_k^{-1}f_{k+1}g_k\rangle$, with $L$ also having 
cohomological dimension 2 because $g_kLg_k^{-1}$ is a subgroup
of $G_{v_k}$ with powers of its two generators equal by 
the second conjugacy inequality above.
Thus on applying Lemma \ref{co2}
again, we have that $H_k$ has cohomological dimension 2 and therefore
so does 
\[\langle g_ktg_{k-1}\ldots g_1g_0a(g_ktg_{k-1}\ldots g_1g_0)^{-1},
f_{k+1}\rangle\]
as it is conjugate in $G_1$ into a subgroup of $H_k$.

If this is the only edge in the conjugacy path that lies
outside $T$ then the remainder of the proof is as above, because we now walk
over the remaining edges $e_{k+1},\ldots ,e_n$ which all lie in $T$. If
however there are further edges outside $T$ then we can build up the
fundamental group $G$ from $H$ by a sequence of HNN extensions
which we perform in the order we walk over them (and then arbitrarily
for any edges left over). Our proof now works as before except that
a stable letter will appear within the product
$g_n\ldots g_1g_0$ whenever we walk over an edge in $\Gamma\setminus T$.
\end{proof}

We can now obtain the existence of Baumslag-Solitar subgroups by reducing
to known facts about groups of cohomological dimension 2.
\begin{co} \label{bscoh}
If $G(\Gamma)$ is a finite graph of groups with infinite cyclic
edge groups and
where every vertex group is
torsion free, balanced, and satisfies the cohomological condition then   
the fundamental group $G$ is not balanced exactly when $G(\Gamma)$
contains a complete non level conjugacy path, which is exactly when
$G$ contains a non Euclidean Baumslag Solitar subgroup.
\end{co}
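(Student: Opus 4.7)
The reverse implication is immediate: any $BS(m,n)$ subgroup with $|m|\neq|n|$ contains an unbalanced element, which by Theorem \ref{bal} corresponds to a complete non-level conjugacy path. For the forward direction, we use Theorem \ref{bal} to extract from the path a vertex element $g\in G_v$ and an element $h\in G$ with $hg^ih^{-1}=g^j$ and $|i|\neq|j|$; replacing $g$ by $g^{\gcd(i,j)}$ we may assume $\gcd(i,j)=1$. Setting $b=hgh^{-1}$ we have the relation $b^i=g^j$ in $G$, and by Theorem \ref{precor} the subgroup $M=\langle g,b\rangle\leq G$ has cohomological dimension at most $2$.

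If $|i|=1$ (the case $|j|=1$ is symmetric) then $b\in\langle g\rangle$, so $hgh^{-1}=g^{\pm j}$ with $|j|\geq 2$, and $\langle g,h\rangle$ is a quotient of the soluble Baumslag-Solitar group $G_{1,\pm j}=BS(1,\pm j)$. Since $g$ has infinite order, Lemma \ref{homo} forces $\langle g,h\rangle\cong BS(1,j)$, giving the desired non-Euclidean Baumslag-Solitar subgroup.

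In the remaining case $|i|,|j|\geq 2$, $M$ is a quotient of the torus knot group $T(i,j)=\langle x,y\mid x^i=y^j\rangle$ in which both $g$ and $b$ have infinite order. The plan is to combine the cohomological dimension bound on $M$ with the normal subgroup structure of $T(i,j)$ (where the centre $\langle x^i\rangle$ is tightly bound to the non-trivial normal subgroups via the quotient $(\Z/i)*(\Z/j)$) to conclude that $M\cong T(i,j)$, and then to argue that the natural surjection $BS(i,j)\to\langle g,h\rangle$ sending $a\mapsto g$ and $t\mapsto h$ is injective, producing a $BS(i,j)$ subgroup of $G$. The metabelian quotient $G_{i,j}=BS(i,j)/N'$ is already ruled out, since in $G_{i,j}$ the image of the subgroup $\langle a,tat^{-1}\rangle$ collapses to $\Z$, contradicting $M\cong T(i,j)$.

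The main obstacle is the last step: excluding the remaining proper quotients of $BS(i,j)$ as candidates for $\langle g,h\rangle$, in particular the non-trivial normal subgroups $K\subsetneq N'$ of $BS(i,j)$ satisfying $K\cap\langle a\rangle=\{e\}$ and $K\cap\langle a,tat^{-1}\rangle=\{e\}$. To handle these I would iterate Theorem \ref{precor} by traversing the original conjugacy path $k$ times, giving cohomological dimension bounds on the subgroups $\langle g,h^kgh^{-k}\rangle\leq G$ for every $k\in\N$, and then use these bounds on their ascending union (which is the normal closure of $g$ in $\langle g,h\rangle$) together with the classification of normal subgroups of $BS(i,j)$ to conclude that the kernel of the quotient $BS(i,j)\to\langle g,h\rangle$ must vanish.
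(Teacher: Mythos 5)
Your reduction via Theorem \ref{bal}, and the case $|i|=1$ or $|j|=1$ (which is essentially Proposition \ref{stff}(i) via Lemma \ref{homo} and needs no cohomology), are fine. But the heart of the corollary is the case $|i|,|j|\geq 2$, and there your argument is explicitly only a plan, resting on two unproven claims, each of which is a genuine gap. First, you assert that the cohomological dimension bound forces $M=\langle g,b\rangle\cong T(i,j)$; you give no argument, and it is not at all clear that a proper quotient of $T(i,j)$ which is torsion free, has $g,b$ of infinite order and has cohomological dimension at most $2$ cannot occur: the kernel need only avoid the centre, so it embeds in $\Z/i*\Z/j$, and ruling out every such normal subgroup is real work. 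Second, and more seriously, you aim to prove that $BS(i,j)\to\langle g,h\rangle$ is injective. This is stronger than what the corollary asserts and is precisely what the paper warns against: its proof says explicitly that the analogous group $S=\langle s,a\rangle$ need \emph{not} be isomorphic to $BS(j,i)$, since $h$ is assembled from a whole conjugacy path through several vertex groups and $\langle g,h\rangle$ can be a more complicated generalised Baumslag--Solitar group. Your proposed repair --- iterating Theorem \ref{precor} over $k$-fold traversals of the path and bounding the cohomological dimension of the ascending union of the $\langle g,h^kgh^{-k}\rangle$ --- does not go through: Theorem \ref{precor} is proved only for paths crossing each edge of $\Gamma\setminus T$ at most once (its proof invokes Proposition \ref{red}), cohomological dimension can increase on passing to a countable ascending union, and bounds on the two-generator pieces $\langle g,h^kgh^{-k}\rangle$ do not control the normal closure of $g$, which is generated by all these conjugates simultaneously.

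The paper's route avoids all of this and you should compare it. It arranges (by adding the non-tree edges one at a time and stopping at the first unbalanced stage) that $G=\langle H,t\rangle$ with $ha^ih^{-1}=b^j$ in $H$, replaces $t$ by $s=h^{-1}t$, and uses Theorem \ref{precor} together with Lemmas \ref{subha} and \ref{co2} to show that $S=\langle a,h^{-1}bh,s\rangle$ has cohomological dimension at most $2$ and satisfies $I_S(a)=S$. Kropholler's theorem \cite{krop} then identifies $S$ as a generalised Baumslag--Solitar group, and Levitt's Proposition 7.5 \cite{lev} extracts a $BS(m,n)$ subgroup with $m/n=j/i$ in lowest terms. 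These two external inputs are exactly what replace the structure theory of quotients of $T(i,j)$ and $BS(i,j)$ that your plan would require; without them, or a complete substitute, the forward implication is not proved.
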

\begin{proof} If $G$ is balanced then so are its subgroups and
by Theorem \ref{bal}, $G$ is not balanced exactly when we
have such a path. If so then consider the construction in this proof where
we add edges one by one to a maximal tree and let us stop on the first
occasion where the resulting fundamental group is not balanced. This group
will end up being a subgroup of $G$ so we now replace the final graph of groups
$G(\Gamma)$ with this one. Thus on taking $H(\Delta)$ to be the graph of
groups immediately before this edge was added,
we now have $G=\langle H,t\rangle$ and vertex elements $a,b\in H$ such that
$G$ is the HNN extension with base $H$ and $tat^{-1}=b$. But as this
edge lies in a complete non level conjugacy path, following the rest
of the path implies that we obtain $h\in H$ with $ha^ih^{-1}=b^j$ for
$i,j$ with $|i|\neq |j|$. Thus application of Theorem \ref{precor} with
$G(\Gamma)$ now equal to $H(\Delta)$ and $G$ now equal to $H$ tells us
that $\langle hah^{-1},b\rangle$ has cohomological dimension at most 2.
Now on replacing the stable letter $t$ with the alternative stable letter
$s=h^{-1}t$, we have that $G$ is also the HNN extension with base $H$
and associated subgroups $\langle a\rangle$ and $\langle h^{-1}bh\rangle$
with $s$ conjugating $a$ to $c=h^{-1}bh$. Thus by Lemma \ref{subha}
the subgroup $S=\langle a,c,s\rangle$ is also an HNN extension with
base $\langle a,c\rangle$ and the same
associated subgroups. As the base has cohomological dimension
2, so does this HNN extension $S$
in which $sa^js^{-1}=a^i$ holds for
$|i|\neq |j|$. This does not imply that $S=\langle s,a\rangle$ is isomorphic
to $BS(j,i)$ but as $I_S(a)=S$ which is
of cohomological dimension 2, the main result of \cite{krop} tells us that
$S$ is a generalised Baumslag-Solitar group and we see that 
$r=j/i\neq\pm 1$ is the image of the elliptic element $a$ under the modular
homomorphism of $S$. This implies by \cite{lev} Proposition 7.5 that
$S$ contains a subgroup isomorphic to $BS(m,n)$, where 
$m/n$ is the expression of $r$ in lowest terms, and so $BS(m,n)$ is non
Euclidean.

If instead the fundamental group $G$ contains a non Euclidean Baumslag-Solitar
subgroup then this subgroup and hence $G$ itself is not balanced, so we are
covered by Theorem \ref{bal}.
\end{proof}
Thus for instance we have that a finite graph of groups with free vertex
groups and infinite cyclic edge groups is balanced if and only if it
does not contain a non Euclidean Baumslag - Solitar group, which was not
explicitly stated in \cite{wsox}.

\section{Hyperbolic graphs of groups} 
In a torsion free hyperbolic group $H$, an element $h$ is called
maximal if whenever $h=a^i$ for $a\in H$ we have $i=\pm 1$. However for
these groups this condition is equivalent to $h$ generating its own 
centraliser (or even its own intersector).
Here centralisers are
always infinite cyclic and moreover (on removal of the identity) they
partition $H$ into infinite cyclic subgroups which are all maximal.
In this section we will nearly always be dealing with torsion free word
hyperbolic groups, but if not then saying an element is maximal will 
actually mean here that it generates its own centraliser.

We have the Bestvina - Feighn theorem
for hyperbolicity of
amalgamated free products with infinite cyclic edge groups:
\begin{lem} \label{bfam} (\cite{bf} Section 7, second Corollary)
Suppose that $G=A*_CB$ is an amalgamated free product where
$C=\langle c\rangle$ is infinite cyclic and $A,B$ are torsion
free word hyperbolic groups. Then $G$ is word hyperbolic if and
only if $c$ is maximal in one of $A$ or $B$, which occurs if and only
if $G$ does not contain $\Z^2$.
\end{lem}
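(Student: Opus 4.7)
My plan is to prove the equivalence cyclically: $(\text{word hyperbolic}) \Rightarrow (\text{no }\Z^2\text{ subgroup}) \Rightarrow (c\text{ maximal in }A\text{ or }B) \Rightarrow (\text{word hyperbolic})$.

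The first implication is immediate and standard: every torsion free word hyperbolic group has only infinite cyclic abelian subgroups, so it cannot contain $\Z^2$.

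For the second implication I would argue by contrapositive. Suppose $c$ is non maximal in both factors, so $c=a^m$ in $A$ with $|m|\geq 2$ and $c=b^n$ in $B$ with $|n|\geq 2$. Consider the subgroup $\langle a,b\rangle\leq G$. Because $\langle a\rangle\supseteq\langle c\rangle\subseteq\langle b\rangle$ and both contain the amalgamated subgroup, Lemma \ref{subha}(i) identifies $\langle a,b\rangle$ naturally with the amalgamated free product $\langle a\rangle *_{\langle c\rangle}\langle b\rangle$, i.e. the torus knot group $T(m,n)=\langle a,b\mid a^m=b^n\rangle$. This group has infinite cyclic centre generated by $c=a^m=b^n$ and is torsion free, so picking any element $g\in T(m,n)\setminus\langle c\rangle$ (for instance $g=a$), the commuting pair $\{g,c\}$ generates a rank two torsion free abelian subgroup, which is $\Z^2$. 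Hence $G$ contains $\Z^2$.

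The third implication is where the real work sits, and I would appeal to the Bestvina--Feighn combination theorem from \cite{bf}. The key point is that maximality of $c$ in a torsion free word hyperbolic group $A$ makes $\langle c\rangle$ malnormal in $A$: if $g a^i g^{-1}=a^j$ in $A$ with $i,j\neq 0$, then balancedness gives $|i|=|j|$, so $g$ preserves the axis of $c$, but in a torsion free hyperbolic group the stabiliser of an axis is the maximal cyclic subgroup containing the hyperbolic element, which here is $\langle c\rangle$ itself. With $\langle c\rangle$ malnormal (hence $s$-acylindrical) in at least one factor, the annuli flare/acylindricity hypothesis of the combination theorem is satisfied for the Bass--Serre tree of $G=A*_C B$, and one concludes that $G$ is word hyperbolic. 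I would spend the bulk of the proof carefully verifying the malnormality claim and citing the exact form of the combination theorem that applies to amalgams over virtually cyclic edge groups; this is the main obstacle, since the combination theorem itself is a deep ingredient used essentially as a black box.
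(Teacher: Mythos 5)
First, note that the paper offers no proof of this lemma at all: it is quoted verbatim from Bestvina--Feighn (\cite{bf}, Section 7, second Corollary), so there is no in-paper argument to compare yours against. Your overall scheme (a cyclic chain of three implications, with the hard implication ``$c$ maximal in one factor $\Rightarrow$ hyperbolic'' delegated to the combination theorem) is a sensible way to reconstruct the result, and your malnormality observation for a maximal cyclic subgroup of a torsion free hyperbolic group is correct in substance, even if the ``axis stabiliser'' justification is loose -- the cleaner route is the one the paper uses in Lemma \ref{hypr}: if $gc^ig^{-1}=c^j$ then $|i|=|j|$; if $j=i$ then $g$ centralises $c^i$ and hence lies in $\langle c\rangle$ by maximality, while $j=-i$ forces $g$ to be torsion, a contradiction.

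There is, however, a concrete error in your second implication. Having identified $\langle a,b\rangle$ with the torus knot group $T(m,n)=\langle a,b\mid a^m=b^n\rangle$ via Lemma \ref{subha}(i) (which is fine), you claim that for \emph{any} $g\in T(m,n)\setminus\langle c\rangle$ the pair $\{g,c\}$ generates $\Z^2$, and you offer $g=a$ as a witness. This fails: since $c=a^m$, the subgroup $\langle a,c\rangle$ is just $\langle a\rangle\cong\Z$. More generally $\{g,c\}$ generates $\Z^2$ only when $g$ commutes with $c$ \emph{and} no power of $g$ lies in $\langle c\rangle$, i.e.\ $g$ must have infinite order in the quotient $T(m,n)/\langle c\rangle\cong \Z_m*\Z_n$. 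The fix is easy -- take $g=ab$, whose image $\bar a\bar b$ is a cyclically reduced length-two word in a free product of nontrivial finite groups and so has infinite order; then $\langle ab,\,c\rangle$ is a two-generated torsion free abelian group surjecting onto $\Z$ with $c\neq e$ in the kernel, hence is $\Z^2$ -- but as written the step does not go through.
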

The following result can be deduced directly from this along with
use of reduced forms, but by considering centralisers rather than
the maximal elements themselves.
\begin{lem} \label{mxlemam}
Suppose that $G=A*_CB$ is an amalgamated free product where
$C=\langle c\rangle$ is infinite cyclic and $A,B$ are torsion
free word hyperbolic groups. Let us take an element $a\in A$. Then\\
(i) If $a$ is not maximal in $A$ it is clearly not maximal in $G$.\\
(ii) If $a$ is maximal in $A$ but is not conjugate in $A$ into $C$ then
$a$ is still maximal in $G$.\\
Thus suppose from now on that $a$ is maximal in $A$ and conjugate in
$A$ into $C$, thus to exactly one of $c$ and $c^{-1}$ so say $c$ without
loss of generality. In particular $c$ is maximal in $A$ and thus $G$ is
word hyperbolic by Lemma \ref{bfam}.\\
(iii) Suppose that $c$ is also maximal in $B$
then $c$ is still maximal in $G$.\\
(iv) Suppose that $c$ is not maximal in $B$ then any element 
conjugate in $A$ to $c^{\pm 1}$ 
is clearly maximal in $A$ but not maximal in $G$.
\end{lem}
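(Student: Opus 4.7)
Parts (i) and (iv) are immediate. For (i), any witness $a=x^{k}$ of non-maximality in $A$ (with $x\in A$, $|k|\geq 2$) remains a witness in $G$. For (iv), the hypothesis gives $c=b^{n}$ in $B$ with $|n|\geq 2$, so $a=\alpha c^{\pm 1}\alpha^{-1}=(\alpha b^{\pm 1}\alpha^{-1})^{n}$ exhibits $a$ as a proper power in $G$, while $C_{A}(a)=\alpha C_{A}(c)\alpha^{-1}=\alpha\langle c\rangle\alpha^{-1}=\langle a\rangle$ shows $a$ is still maximal in $A$.

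For (ii), I would assume $a=g^{k}$ in $G$ with $|k|\geq 2$ for contradiction. Since $a$ is elliptic in the Bass-Serre tree and translation length scales with powers, $g$ is elliptic too, so there exist $h\in G$ and $g'\in A\cup B$ with $g=hg'h^{-1}$ and hence $h^{-1}ah=(g')^{k}$. If $g'\in A$, then Proposition \ref{cnjam} applied to the $G$-conjugate pair $a,h^{-1}ah\in A$ (using the assumption that $a$ is not conjugate in $A$ into $C$) produces $\alpha\in A$ with $h^{-1}ah=\alpha a\alpha^{-1}$, so $(\alpha^{-1}g'\alpha)^{k}=a$ inside $A$ contradicts maximality of $a$ in $A$. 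If instead $g'\in B$, the same proposition forces $(g')^{k}=h^{-1}ah$ to lie in $A$, hence in $A\cap B=C$, so that $a$ is $G$-conjugate to an element of $C\subseteq A$; a further application of Proposition \ref{cnjam} then puts $a$ conjugate in $A$ into $C$, again contradicting the hypothesis.

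For (iii), I would first replace $a$ by its $A$-conjugate $c$, which preserves maximality in $G$ since $C_{G}(a)$ and $C_{G}(c)$ are conjugate in $G$, and then aim to show $I_{G}(c)=\langle c\rangle$; this gives $C_{G}(c)=\langle c\rangle$, so $c$ is maximal in $G$. For $g\in I_{G}(c)\cap(A\cup B)$, Lemma \ref{hypr} identifies $I_{A}(c)$ and $I_{B}(c)$ as maximal cyclic subgroups containing $\langle c\rangle$, which equal $\langle c\rangle$ itself by the hypothesis that $c$ is maximal in both factors, so $g\in\langle c\rangle$. For $g\notin A\cup B$ with reduced form $g=g_{r}\cdots g_{1}$ of length $r\geq 2$, I would argue inductively that $gcg^{-1}$ has reduced form of length at least three: the innermost step $g_{1}cg_{1}^{-1}$ cannot lie in $C$, else $g_{1}\in I_{A}(c)\cup I_{B}(c)=\langle c\rangle\subseteq C$ would contradict $g_{1}\in A\setminus C$ or $B\setminus C$, and each further conjugation by $g_{i+1}$ in the opposite factor adds two syllables to the alternation without cancellation. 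Thus $gcg^{-1}\notin A\cup B$, in particular it is not a power of $c$, contradicting $g\in I_{G}(c)$.

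The main obstacle is the reduced-form bookkeeping in (iii): one must verify that no cancellation occurs at any stage of the induction, and the invariant that makes this work is precisely that maximality of $c$ in both $A$ and $B$ forces $I_{A}(c)=I_{B}(c)=\langle c\rangle$, so the growing inner word never collapses back into $C$.
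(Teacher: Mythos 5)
The paper gives no written proof of this lemma beyond the remark that it follows from reduced forms ``by considering centralisers rather than the maximal elements themselves,'' and your parts (i), (iii) and (iv) do exactly that, correctly. In (iii) the one imprecision is that $g\in I_G(c)$ only gives you $gc^kg^{-1}=c^l$ for some nonzero $k,l$, not that $gcg^{-1}$ is a power of $c$; so the innermost syllable you must push through is $g_1c^kg_1^{-1}$ rather than $g_1cg_1^{-1}$. Nothing changes, since $g_1c^kg_1^{-1}\in C$ still forces $g_1\in I_A(c)\cup I_B(c)=\langle c\rangle$, and once that is excluded the conjugate is reduced of length $2r-1\ge 3$ in a single step (no further induction is needed, as $g_2$ lies in the opposite factor from $g_1c^kg_1^{-1}$).

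The genuine gap is in (ii), and it is precisely the point the paper's remark is warning about. In case (ii) the group $G$ need not be word hyperbolic --- that is only guaranteed from the preamble to (iii) onwards, once $c$ is maximal in $A$; if $c$ is non-maximal in both factors then $G$ contains $\mathbb{Z}^2$. The standing convention of Section 10 is that for a possibly non-hyperbolic group ``maximal'' means ``generates its own centraliser,'' and your Bass--Serre plus Proposition \ref{cnjam} argument establishes only the strictly weaker statement that $a$ is not a proper power in $G$: it does not rule out $C_G(a)\supsetneq\langle a\rangle$. The repair is the same reduced-form computation you already run in (iii): if $x=x_r\cdots x_1$ is reduced of length $r\ge 1$ and $x_1\in B\setminus C$ then $xax^{-1}$ is reduced of length $2r+1\ge 3$, while if $x_1\in A\setminus C$ then $x_1ax_1^{-1}\in A\setminus C$ (as $a$ is not conjugate in $A$ into $C$) and the word is reduced of length $2r-1\ge 3$ unless $r=1$; hence $C_G(a)=C_A(a)=\langle a\rangle$. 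This is both stronger and shorter than your route, dispensing with translation lengths and Proposition \ref{cnjam} altogether. (In the paper's actual applications the lemma is only invoked when $G$ is already known to be hyperbolic, where the two notions coincide, so the gap is harmless there --- but it is a gap against the lemma as stated.)
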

We now introduce a similar notion to the conjugacy paths already considered,
in order to keep track of vertex elements which are maximal in their
vertex group but not in the fundamental group.
Given an unoriented
edge $e$ in the graph of groups $G(\Gamma)$ with infinite cyclic
edge groups and torsion free hyperbolic vertex groups, we consider
the inclusions of the edge group $G_e$ in its neighbouring vertex
groups $G_v$ and $G_w$ (where maybe $v=w$). If $G_e$ is included in
$G_v$ as a non maximal subgroup then we put an arrow on $e$ at the
end of $e$ next to $v$ and we make this arrow point towards $v$. If
$G_e$ is included maximally in $G_v$ then no arrow is added, and we then
do the same with $G_w$ and then over all edges in the graph.
\begin{defn} 
Given vertex elements $g_v\in G_v$ and $g_w\in G_w$ (where maybe $v=w$), 
a {\bf semi non maximal path} from $g_v$ to $g_w$ is a conjugacy path
in $G(\Gamma)$ from $g_v$ to $g_w$
such that no edges in this path are
labelled with arrows as above, apart from a single arrow on the initial edge
which points towards the vertex $v$.

A {\bf full non maximal path} from $g_v$ to $g_w$ is a reduced
conjugacy path 
in $G(\Gamma)$ from $g_v$ to $g_w$
such that all edges in this path are
unlabelled, apart from an arrow on the initial edge
which points towards the vertex $v$ and an arrow on the final edge
which points towards $w$, and such that the final edge is not the reverse
of the initial edge.
\end{defn}
\begin{prop} \label{redfll}
Suppose we have a finite graph $G(\Gamma)$ of torsion free word hyperbolic
groups with infinite cyclic edge groups.
If there exists a semi non maximal path
from $g\in G_{v_0}$ to $g'\in G_{v_n}$ then
we can replace it with one that also runs from $g\in G_{v_0}$ to 
$g'\in G_{v_n}$ but which
traverses any unoriented edge at most once. The same statement is true
for any full maximal path. 
\end{prop}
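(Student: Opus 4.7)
The plan is to adapt the excision argument of Proposition~\ref{red} to the labelled setting, treating the semi non maximal and full non maximal cases in parallel and taking care that the arrow structure is preserved under the reduction.

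The key preliminary observation is that in either kind of path, the marked edges are each traversed at most once. Indeed, the arrow at $v_0$ on $e_1$ (and the arrow at $v_n$ on $e_n$, in the full case) reflects non maximality of the edge inclusion into that vertex group, which is an intrinsic property of the unoriented edge. Hence if the unoriented edge underlying $e_1$ were traversed again as some $e_j$ with $j \geq 2$, in either orientation, then $e_j$ would still carry an arrow at $v_0$, violating the requirement that $e_j$ be unlabelled for $j \geq 2$. The same reasoning rules out a second appearance of the unoriented edge underlying $e_n$ in the full case. Moreover, in the full case the underlying unoriented edges of $e_1$ and $e_n$, if distinct, each appear only in their designated position; and if they coincide then $e_1 = e_n$ as oriented edges (the reverse situation being forbidden by definition), whereupon the Proposition~\ref{red} excision with $r = 1$, $s = n$ immediately reduces the path to the single edge $e_1$, which trivially satisfies the required conditions.

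With the marked edges thus accounted for, any remaining repetition of an unoriented edge lies entirely in the unmarked interior. I would then apply verbatim the two excisions from Proposition~\ref{red}: if $e_r$ and $e_s$ coincide as oriented edges with $r < s$, delete $e_{r+1}, \ldots, e_s$; if $e_s$ is the reverse of $e_r$, delete $e_r, \ldots, e_s$. Each shortened sequence remains a conjugacy path from $g$ to $g'$ by Proposition~\ref{red}. The preliminary observation forces $r \geq 2$ in the back-and-forth excision, and in the full case additionally $s \leq n-1$, so neither marked edge is ever removed; their arrows persist and no new arrows are introduced. The stipulation in the full case that the final edge is not the reverse of the initial edge is inherited automatically, since the endpoint edges are untouched. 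Because the length strictly decreases at each excision, finitely many iterations produce a path of the required type in which each unoriented edge is traversed at most once.

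The main obstacle is precisely the preliminary observation about the marked edges: once it is secured, the construction of Proposition~\ref{red} applies unchanged, and the degenerate subcase there in which the entire path might be excised is automatically avoided, because the marked endpoint edge(s) block $r = 1$ (and $s = n$ in the full case) in the back-and-forth excision.
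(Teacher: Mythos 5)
Your proposal is correct and follows essentially the same route as the paper: rerun the excision argument of Proposition \ref{red} and observe that, since arrows are intrinsic to the unoriented edges and all non-endpoint edges of the path must be unlabelled, the marked edge(s) can never be among those removed. Your explicit treatment of the case where the initial and final edges of a full non maximal path coincide as oriented edges is if anything slightly more careful than the paper's brief dismissal of that configuration.
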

\begin{proof}
We again run through the proof of Proposition \ref{red}, noting that the
initial edge of any semi maximal path is the only labelled edge, thus
this will remain and only unlabelled edges will be removed. Similarly
the only labelled edges in a full maximal path are the initial and final
ones, but these cannot be the same oriented edge as the arrows point
in different directions (unless the path is just this single edge) and the
definition rules out them being the reverse of each other. Thus again only
unlabelled edges can be removed. 
\end{proof}
We can use semi maximal paths to determine maximal elements in a graph of 
groups: suppose there is one from $g_v\in G_v$ to $g_w\in G_w$ where
$g_v$ and $g_w$ are both maximal in their respective vertex groups. 
If this path is considered just as a conjugacy path,
it would only indicate that some powers of $g_v$ and of $g_w$ are
conjugate in $G$. But as every edge apart from the initial one is
unmarked, all generators of these edge groups include into 
all subsequent vertex groups 
as maximal elements, and therefore we see that actually we have
$g_v^i$ and $g_w$ are conjugate in $G$ for $|i|>1$ so that
$g_w$ is not maximal in $G$, whether or not $g_v$ is.

We now consider when such a graph of groups
has a fundamental group which is word hyperbolic, starting with a tree.
\begin{thm} \label{hyptr}
Suppose that $G(\Gamma)$ is a graph of groups where $\Gamma$
is a tree, with infinite
cyclic edge groups and all vertex groups torsion free word hyperbolic.
Then $G$ is word hyperbolic unless 
there exists a full non maximal path in $G(\Gamma)$, in which case $G$ 
contains $\Z^2$, and if so then there exists such a path passing through
any non oriented edge at most once by Proposition \ref{redfll}
thus there are only finitely many
paths to check.
\end{thm}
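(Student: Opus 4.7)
The plan is to prove the theorem by induction on the number of edges of the tree $\Gamma$, removing a leaf edge at each step. The base case of a single vertex (no edges) is trivial. For the inductive step, pick a leaf edge $e$ joining a leaf vertex $w$ to a vertex $v_0$ of the remaining subtree $T_0$, and write $G=A*_CB$ with $A=\pi_1(G(T_0))$, $B=G_w$ and $C=G_e=\langle c\rangle$. The inductive hypothesis applied to $G(T_0)$ gives two possibilities: either $G(T_0)$ already contains a full non maximal path (in which case $A$, hence $G$, is not word hyperbolic and contains $\Z^2$, and the same path works in $G(\Gamma)$), or $A$ is torsion free word hyperbolic and no such path exists in $T_0$.

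In the second situation both $A$ and $B$ are torsion free word hyperbolic, so Lemma \ref{bfam} reduces hyperbolicity of $G$ to whether $c$ is maximal in $A$ or in $B$. The status of $c$ in $B=G_w$ is recorded by the arrow on $e$ at the $w$-end. If there is no such arrow, then $c$ is maximal in $B$, so $G$ is word hyperbolic; moreover any full non maximal path in $G(\Gamma)$ would, since $e$ is a leaf, either avoid $e$ (ruled out by the inductive hypothesis) or use $e$ as its first or last edge, which is excluded because the arrow at the $w$-end of $e$ is absent. If the arrow at $w$ is present, we must decide whether $c$ is maximal in $A$. The trivial subcase is when $e$ also carries an arrow at the $v_0$-end; then $c$ is non-maximal already in $G_{v_0}$, the single edge $e$ is itself a full non maximal path, and $G$ is not word hyperbolic by Lemma \ref{bfam}.

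The remaining subcase is that $c$ is maximal in $G_{v_0}$ but not necessarily in $A$. To handle this I would prove, simultaneously with the main theorem, a parallel subclaim: for any vertex element $g$ of $G(T_0)$ which is maximal in its vertex group $G_{v'}$, the element $g$ fails to be maximal in $A$ if and only if there is a semi non maximal path in $G(T_0)$ ending at $g\in G_{v'}$ (starting from some vertex element maximal in its own vertex group). The backward direction is an iterative application of Lemma \ref{mxlemam}: the arrow on the initial edge gives non-maximality of the edge-group inclusion there (so the starting element acquires a proper power mapping into the edge group), and the unmarked intermediate edges preserve this non-maximality through maximal edge-group inclusions as one amalgamates outward along $T_0$. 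The forward direction is analogous to Theorem \ref{cnjtr}: a putative $k$-th root of $g$ in $A$ is elliptic in the Bass--Serre tree (because $g$ is), so by the conjugacy theorem it is conjugate to a vertex element, and tracing the resulting conjugacy path while invoking Lemma \ref{mxlemam}(ii) and (iv) forces every intermediate edge to carry a maximal inclusion and produces the required semi non maximal path ending at $g$.

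With the subclaim in hand, the remaining subcase closes cleanly: if $c$ is maximal in $A$ then no semi non maximal path in $G(T_0)$ ends at $c\in G_{v_0}$, and combined with the absence of a full non maximal path in $T_0$ this shows there is no full non maximal path in $G(\Gamma)$; while if $c$ is not maximal in $A$, the semi non maximal path given by the subclaim, extended by the edge $e$ at the end with its arrow toward $w$, produces a full non maximal path in $G(\Gamma)$, and by Lemma \ref{bfam} $G$ is not word hyperbolic and contains $\Z^2$. The finiteness conclusion is then immediate from Proposition \ref{redfll}. The main obstacle will be cleanly setting up and proving the subclaim, since it is the non-maximality analogue of Theorem \ref{cnjtr} and must be carried along in the induction in parallel with the hyperbolicity statement, with some care needed to identify the endpoint of the semi non maximal path with the generator $c$ rather than an arbitrary conjugate.
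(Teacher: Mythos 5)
Your proposal is correct and follows essentially the same route as the paper: induction over the edges of the tree via a leaf decomposition $G=A*_CB$, with Lemma \ref{bfam} deciding hyperbolicity according to the arrows on the leaf edge, and an auxiliary statement (your ``subclaim'', the paper's inductive part (iii)) characterising vertex elements that are maximal in their vertex group but not in the ambient group by the existence of a semi non maximal path ending there, carried along in parallel through the induction and proved using Lemma \ref{mxlemam} together with conjugacy paths. The only cosmetic difference is that you sketch the forward direction of the subclaim via roots of elliptic elements in the Bass--Serre tree, where the paper runs the same content through Lemma \ref{mxlemam} and Theorem \ref{cnjtr} inductively.
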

\begin{proof}
As before, the proof is by induction on the number of edges. However
we also need to keep track of maximal elements so our inductive
statement is as follows:\\
\hfill\\
(i) If there exists a full non maximal path in $G(\Gamma)$ then $G$ 
contains $\Z^2$ and so is not word hyperbolic.\\
(ii) If $G(\Gamma)$ contains no full non maximal paths then $G$ is 
word hyperbolic.\\
(iii) If $G$ is word hyperbolic
then a vertex element $g_v$ which is
maximal in its vertex group is non maximal in $G$ if and only if there
exists a semi non maximal path that ends at $g_v$.\\
\hfill\\
Lemmas \ref{bfam} and \ref{mxlemam} give us the base case. 
Now given the graph of groups
$A(T)$, let us add an edge $e$ to
the tree $T$ to form the tree $\Gamma$ with $e$ having the vertex 
$v\in T$ and $w\notin T$, so that the fundamental group
$G$ of $G(\Gamma)$ is equal to $A*_CB$ where 
$B$ is the vertex group $G_w$.

By the inductive hypothesis applied to $A(T)$,
if there were a full non maximal path
in $A(T)$ and thus in $G(\Gamma)$
then $A$ and $G$ would contain $\Z^2$, so (i), (ii), (iii) hold
for $G(\Gamma)$ in this case.
Hence now we assume that no such path exists in $A(T)$
and consequently by induction
that $A$ is torsion free word hyperbolic.

First suppose that there is no arrow on $e$ pointing towards the vertex
$w$, so that $C=\langle c\rangle$ is included maximally in $B$.
Then no full maximal path can
lie in $G(\Gamma)$ that does not already lie in $A(T)$ (as these are
reduced paths by definition), so (i) holds
in this case. Moreover as we already have maximality on one side, we
obtain (ii) because $G$ is word hyperbolic by \cite{bf}.
As for (iii), on application of 
Lemma \ref{mxlemam} (with $A$ and $B$ as they are and then swapped),
we see that the only maximal elements of $A$ and $B$ that might no
longer be maximal in $G$ are those maximal
elements $b$ of $B$ which are conjugate in $B$ into $C$, and then only if
$c$ is not a maximal element of $A$. But this latter case can only
happen if either
$c$ is not a maximal element in the vertex group $G_v$, in
which case $e$ has an arrow pointing towards $v$ and we have our
semi non maximal path from $c\in G_v$ to $c\in G_w=B$, or by the
inductive hypothesis we have a semi non maximal path in $A(T)$ that starts
somewhere else and ends at $c\in G_v$, in which case we add the edge $e$
to the end of this path to get one that ends at $b\in B=G_w$.
Conversely the only way that new semi non maximal paths can be created in
$G(\Gamma)$ is by using $e$ itself, either as the only edge if it has an
arrow pointing towards $v$, or by adding it on if no arrow
is present. If this is not the final edge then we immediately need to
backtrack, thus Proposition \ref{redfll} tells us that we can regard it
as lying in $A(T)$ anyway.

For our second case,
we suppose that there is an arrow on $e$ pointing towards $w$, so
that in $B=G_w$ we have $c=d^i$ say, for $d$ maximal in $B$ and $|i|>1$.
If there is another arrow on $e$ pointing towards $v$ then we immediately
have a full non maximal path in $G(\Gamma)$, with $G$ 
non hyperbolic and containing $\Z^2$ by \cite{bf} because neither edge
inclusion is maximal. Otherwise $c$ is a maximal
element of $G_v$ which might not or might be a maximal element of $A$.
By part (iii) of the inductive hypothesis applied to $A$, which is
assumed to be word hyperbolic, this is determined by the existence
or not of a semi non maximal path in $A(T)$ which ends at $c\in G_v$
(assumed reduced by Proposition \ref{redfll}).
But if one exists then by adding the edge $e$ to the end, we have a
full non maximal path in $G(\Gamma)$ with $G=A*_CB$ being non maximal
on both sides, thus again we have $\Z^2$ in $G$ and (i) is confirmed
in this case too. If however there are no semi non maximal paths
in $A(T)$ that end at $c\in G_v$ then we can assume that $c$ is maximal
in $A$ and thus $G=A*_CB$ is word hyperbolic, giving (ii) here.

Finally we need to establish (iii) in the second case, whereupon
we are now dealing with $c$ being maximal in $A$ as well as in $G_v$. 
But $c$ is clearly not maximal in $G$ and so again by
Lemma \ref{mxlemam} the ``new non maximal vertex elements'' are exactly
those vertex elements that are maximal in $A$ and
are conjugate in $A$ to $c^{\pm 1}\in G_v$. Now
a conjugacy path in $A(T)$ from such an element $g_u\in G_u$ for $u\in T$ 
to $c\in G_v$ as in Section 7
is clearly a necessary condition for this to occur. Let us take
such a path (travelling over any edge at most once)
and first look at where arrows might appear on these edges. If we start
at $g_u\in G_u$ and walk towards $c\in G_w$, 
suppose that along the way we encounter an arrow
pointing in the opposite direction of travel. As we end up at $w$ with
an arrow pointing along with us, on finding the final arrow encountered
that points in the reverse direction
we obtain a full non maximal
path in $G(\Gamma)$, at least on application of Proposition \ref{redfll}.
But we have already established part (i) of
our inductive statement for $G(\Gamma)$ so this is a contradiction.

If however we encounter an arrow along the way that points in our direction 
then the reverse of this path so far is already 
a semi non maximal path lying
in $A(T)$ and ending at $g_u$, so by the inductive hypothesis $g_u$ was
not maximal in $A$.
Thus our conjugacy path has no arrows appearing and thus by adding on 
the edge $e$ at the end and reversing, we
are left with a semi non maximal path running
from $c\in B$ to $g_u\in G_u$. As for the converse, the new semi non maximal
paths will all be created by starting at $b\in G_w$ with the edge $e$ and
then following a conjugacy path in $A(T)$ which has no edges labelled,
which certainly means that the end element is not maximal in $G$.
\end{proof} 

We now move to HNN extensions, where \cite{bf2} gives
the exact conditions for word hyperbolicity over virtually cyclic
groups, though again we only state it here in the torsion free case.
\begin{prop} \label{bfhn} (\cite{bf2} Corollary 2.3)
Let $H$ be a torsion free hyperbolic group and let us form the HNN
extension $G=\langle H,t\rangle$ over the
infinite cyclic subgroups $A=\langle a\rangle$ and
$B=\langle b\rangle$ where $tat^{-1}=b$. Then $G$ is word hyperbolic
unless\\
Either: some conjugate of $B$ in $H$ intersects $A$ non trivially\\
Or: The intersector $I_H(a)\neq A$ and the intersector $I_H(b)\neq B$.\\
In both of these cases $G$ contains a Baumslag - Solitar 
subgroup and so is not word hyperbolic.
\end{prop}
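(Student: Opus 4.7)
This proposition is stated as a corollary of the Bestvina--Feighn combination theorem from \cite{bf2}, so my plan is to sketch how the two halves would be organised rather than to reprove the combination result from scratch. The easier half is the construction of Baumslag--Solitar subgroups from either failing condition; the harder half is hyperbolicity when both conditions fail.

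Suppose first that $hb^mh^{-1}=a^n$ for some $h\in H$ and non zero $m,n$. Since $b=tat^{-1}$, substitution gives $(ht)a^m(ht)^{-1}=a^n$ in $G$, so $\langle a,ht\rangle$ is a quotient of $BS(m,n)$. To verify this quotient is an isomorphism I would examine the Bass--Serre tree of the HNN decomposition of $G$: the element $a$ is elliptic with $\langle a\rangle$ contained in the base vertex stabiliser, while $ht$ has the same positive translation length as $t$ (because $h$ is elliptic). Then $\langle a,ht\rangle$ acts on the axis of $ht$ with $\langle a\rangle$ as a vertex stabiliser, and Bass--Serre theory identifies this subgroup as an HNN extension of $\langle a\rangle$ with stable letter $ht$ and associated subgroups $\langle a^m\rangle,\langle a^n\rangle$, namely $BS(m,n)$.

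Suppose instead that both intersectors properly exceed the edge groups. Since $H$ is torsion free hyperbolic, Lemma \ref{hypr} applied to $H$ identifies $I_H(a)$ and $I_H(b)$ as the maximal cyclic subgroups $\langle a_0\rangle$ and $\langle b_0\rangle$ containing $a$ and $b$, with $a=a_0^k$, $b=b_0^l$ and $k,l\geq 2$. Setting $u=t^{-1}b_0 t\in G$, a direct computation using $tat^{-1}=b$ together with $[b_0,b]=e$ yields both $uau^{-1}=a$ and $u^l=t^{-1}b_0^l t=t^{-1}bt=a=a_0^k$. Thus $a_0$ and $u$ both lie in $C_G(a)$ and share a common power. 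If they commute then $\langle a_0,u\rangle$ is torsion free abelian on two generators; it cannot be infinite cyclic, because $u\notin H$ (as $b_0\notin B=\langle b_0^l\rangle$) while $a_0\in H$, and Britton's lemma rules out any common root lying in a conjugate of $H$, so $\langle a_0,u\rangle\cong\mathbb Z^2=BS(1,1)$. If $[a_0,u]\neq e$ the situation is subtler: one has to work inside $\langle a_0,u,t\rangle$, exploiting that $a=a_0^k=u^l$ is central in the 2-generator piece and that the HNN structure of $G$ supplies an extra conjugation between powers of $a_0$ and $b_0$ via $t$, to extract a Baumslag--Solitar subgroup.

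For the converse direction, when both conditions fail I would invoke the Bestvina--Feighn combination theorem for HNN extensions of torsion free hyperbolic groups with cyclic associated subgroups. The failing hypotheses translate into the uniform annuli flaring condition of \cite{bf2}: the failure of the first condition rules out annuli threading through the stable letter, while the maximality of at least one of $A,B$ in $H$ rules out annuli doubled on both sides of the edge, and together these force flare and hence hyperbolicity. The main obstacle is accordingly twofold; matching the elementary intersector statements with the geometric flaring condition in the Bestvina--Feighn framework, and producing the Baumslag--Solitar subgroup in the non commuting subcase of the second condition. Both issues are dealt with by the annulus and flare analysis of \cite{bf2}.
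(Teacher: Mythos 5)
The paper offers no proof of this proposition at all: it is imported as Corollary 2.3 of Bestvina--Feighn \cite{bf2}, merely rephrased using intersectors (for $H$ torsion free hyperbolic, $I_H(a)\neq A$ is exactly the statement that $a$ is a proper power). So there is no argument in the paper to compare yours against, and your decision to defer the flaring/hyperbolicity direction to \cite{bf2} is consistent with the paper's treatment. Your sketch of the ``Either'' case is also essentially right in outline, with one caveat: the induced splitting of $\langle a,ht\rangle$ has vertex group $\langle a,ht\rangle\cap H$, which contains $hta t^{-1}h^{-1}=hbh^{-1}$ and so is generally larger than $\langle a\rangle$; the cleaner route is simply to note that $(ht)a^m(ht)^{-1}=a^n$ with $a$ elliptic and $ht$ hyperbolic on the Bass--Serre tree already yields either an unbalanced element (if $|m|\neq|n|$) or a $\Z^2=\langle a^m,(ht)^2\rangle$ (if $|m|=|n|$), so $G$ is not hyperbolic either way.

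The genuine gap is in your ``Or'' case, where the Baumslag--Solitar subgroup is never actually produced: your dichotomy on whether $a_0$ and $u=t^{-1}b_0t$ commute is miscalibrated, and the branch you defer to ``the annulus and flare analysis'' is the only one that occurs. Indeed, with $a=a_0^k=u^l$ and $|k|,|l|\geq 2$, Britton's lemma shows that an alternating word $a_0^{i_1}(t^{-1}b_0^{j_1}t)a_0^{i_2}(t^{-1}b_0^{j_2}t)\cdots$ with $k\nmid i_s$ and $l\nmid j_s$ is reduced in the HNN extension (a pinch would require $a_0^{i_s}\in\langle a_0^k\rangle$ or $b_0^{j_s}\in\langle b_0^l\rangle$), so $\langle a_0,u\rangle$ is precisely the amalgam $\langle a_0\rangle *_{a_0^k=u^l}\langle u\rangle\cong\langle x,y\mid x^k=y^l\rangle$. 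This group is never abelian for $|k|,|l|\geq 2$, so your commuting branch is vacuous; but it has infinite centre $\langle a\rangle$ and infinite quotient $\Z/k*\Z/l$, so the preimage of any infinite cyclic subgroup of that quotient together with the centre gives $\Z^2=BS(1,1)\leq G$. With that normal-form computation inserted, no appeal to the flaring machinery is needed for the Baumslag--Solitar half of the statement; only the positive hyperbolicity direction genuinely requires \cite{bf2}.
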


We will again need a version of Lemma \ref{mxlemam} for HNN extensions
(proved in the same manner using reduced forms):
\begin{lem} \label{mxlemhn}
Let  $G=\langle H,t\rangle$ be the HNN extension of
the torsion free hyperbolic group $H$ over the
infinite cyclic subgroups $A=\langle a\rangle$ and
$B=\langle b\rangle$ where $tat^{-1}=b$. Then on taking an element
$h\in H$ we have:\\
(i) If $h$ is not maximal in $H$ then it is clearly not maximal in $G$.\\
(ii) If $h$ is maximal in $H$ and is not conjugate in $H$ into
$A\cup B$ then $h$ is still maximal in $G$.\\
Now suppose that no conjugate of $B$ in $H$ intersects $A$ non trivially.
Suppose also that $h$ is maximal in $H$ and without loss of
generality is conjugate in $H$
into $A$, thus to exactly one of $a$ and $a^{-1}$
so say $a$. In particular $a$ is maximal in $H$ and thus $G$ is word
hyperbolic by Proposition \ref{bfhn}.\\ 
(iii) Suppose that $b$ is also maximal in $H$ then 
$a$ and $b$ are maximal in $G$.\\
(iv) Suppose that $b$ is not maximal in $H$ then clearly any element conjugate
in $H$ to $a^{\pm 1}$ is maximal in $H$ but not in $G$ and any element
conjugate in $H$ to $b^{\pm 1}$ is not maximal in $H$ nor in $G$. 
\end{lem}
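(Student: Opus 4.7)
The plan is to mirror the proof of Lemma \ref{mxlemam}, working with reduced normal forms in HNN extensions and using Collins' criterion (Proposition \ref{cnjhnn}) in place of Proposition \ref{cnjam}. Part (i) is immediate, since any expression $h=k^i$ in $H$ with $|i|>1$ persists in $G$. For (ii), (iii) and (iv) the starting observation is that if $h\in H$ and $h=g^n$ in $G$ with $|n|>1$, then $g$ has infinite order and must act elliptically on the Bass-Serre tree of $G$; otherwise $g^n$ would be hyperbolic, contradicting $g^n=h\in H$. Hence $g$ is conjugate in $G$ to some $h'\in H$, and $h$ is conjugate in $G$ to $h'^n\in H$.

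For (ii) I would apply Proposition \ref{cnjhnn} directly: since no power of $h$ is conjugate in $H$ into $A\cup B$, $h$ and $h'^n$ must already be conjugate inside $H$, giving $h=(\delta h'\delta^{-1})^n$ for some $\delta\in H$ and contradicting maximality of $h$ there. Part (iv) is similarly direct: writing $b=d^m$ in $H$ with $|m|>1$, the identity $a=tbt^{-1}=(tdt^{-1})^m$ in $G$ exhibits $a$ as a proper $m$-th power, so any conjugate in $H$ of $a^{\pm 1}$ is a proper power in $G$ too; elements conjugate in $H$ to $b^{\pm 1}$ were already non-maximal in $H$ and so non-maximal in $G$.

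The main work is part (iii). I would first observe that the hypotheses force $G$ to be both torsion free word hyperbolic, by Proposition \ref{bfhn} (the assumption that no conjugate of $B$ in $H$ meets $A$ kills the first exceptional clause, and $a$ maximal in $H$ gives $I_H(a)=A$ which kills the second), and balanced, by Proposition \ref{hnnp}(ii). Assuming $a=g^n$ in $G$ with $|n|>1$, the elliptic argument again produces $h'\in H$ with $a$ conjugate in $G$ to $h'^n$. Applying Proposition \ref{cnjhnn} with the roles swapped, and using $a\in A$, I deduce $h'^n$ must be conjugate in $H$ into $A\cup B$. In the case $\mu h'^n\mu^{-1}=a^k$ for some $\mu\in H$ and $k\neq 0$, setting $h''=\mu h'\mu^{-1}$ gives $h''^n=a^k$ in a torsion free hyperbolic group where $a$ is maximal, so the common maximal cyclic subgroup containing $\langle h''\rangle$ and $\langle a\rangle$ must be $\langle a\rangle$ itself, yielding $h''=a^{\pm p}$ and $\pm pn=k$. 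The conjugating element $\nu=\gamma\mu^{-1}$ will then satisfy $\nu a^k\nu^{-1}=a$, so balanced-ness of $G$ forces $|k|=1$ and therefore $|n|=1$, a contradiction. The case $\mu h'^n\mu^{-1}=b^k$ is handled symmetrically using $b=tat^{-1}$ to translate the final conjugation back to one relating $a$ and $a^k$, whereupon balanced-ness again gives $|n|=1$. The statement for $b$ follows from the symmetric setup obtained by swapping $t$ with $t^{-1}$.

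The hard part will be (iii), precisely because Collins' criterion does not directly provide a conjugation inside $H$ as it does in (ii): here $a$ is itself conjugate in $H$ into $A$. The workaround combines two ingredients, maximality of $a$ and $b$ in $H$ to pin $h'^n$ down up to conjugation in $H$ as a power of $a$ or of $b$, together with the balanced-ness of $G$ to force the resulting modular exponent to be $\pm 1$; only both together rule out a proper $n$-th root.
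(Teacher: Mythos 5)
The paper supplies no written proof of this lemma: it is asserted to follow ``in the same manner'' as Lemma \ref{mxlemam}, namely by a direct computation with reduced forms in the HNN extension, tracking centralisers rather than roots. Your argument is correct but takes a genuinely different route. You replace the reduced-form computation by the observation that any root $g$ of an elliptic element is itself elliptic on the Bass--Serre tree, hence conjugate into $H$, and you then use Collins' criterion (Proposition \ref{cnjhnn}) to pull the resulting conjugacy in $G$ back to a conjugacy in $H$ into $A\cup B$; part (iii) is finished by combining the maximality of $a$ and of $b$ in the torsion free hyperbolic group $H$ (to pin the root down as a power of $a$ or of $b$) with the balancedness of $G$ supplied by Proposition \ref{hnnp}(ii) (to force the exponent to be $\pm1$). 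This correctly isolates where each hypothesis of (iii) enters and leans on machinery the paper has already built, at the price of being less self-contained than the intended normal-form calculation; the two inputs you cite are proved independently of Section 10, so there is no circularity.

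Two small repairs. First, $tat^{-1}=b$ gives $a=t^{-1}bt=(t^{-1}dt)^m$, not $a=tbt^{-1}=(tdt^{-1})^m$; this is only a direction slip and does not affect (iv). Second, recall the convention stated at the start of Section 10: when the ambient group is not word hyperbolic, ``maximal'' is to mean ``generates its own centraliser''. In part (ii) the group $G$ need not be word hyperbolic, and your argument establishes only that $h$ has no proper roots in $G$, which is a priori weaker. The gap closes with one line of reduced forms: since $h$ is not conjugate in $H$ into $A\cup B$, conjugating $h$ by any reduced word of positive syllable length yields a reduced word not lying in $H$, so $C_G(h)\le H$ and hence $C_G(h)=C_H(h)=\langle h\rangle$. (In parts (iii) and (iv) the group $G$ is word hyperbolic, so the two notions of maximality coincide and your argument is complete as written; also note that in (ii) you only need $h$ itself, not all its powers, to be non-conjugate into $A\cup B$ when invoking Proposition \ref{cnjhnn}, which is exactly what the hypothesis provides.)
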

We can now give our final result.  
\begin{thm} \label{hyp}
Suppose that $G(\Gamma)$ is a finite
graph of groups where all edge 
groups are infinite cyclic and all vertex groups are
torsion free word hyperbolic.
Then $G$ is word hyperbolic unless\\
(i) there exists a complete conjugacy path in  $G(\Gamma)$ or\\
(ii) there exists a full non maximal path in $G(\Gamma)$\\
in which case $G$ contains a Baumslag - Solitar group and so is not
word hyperbolic. If either of these hold then by Proposition \ref{redcmp}
for (i) and Proposition \ref{redfll} for (ii) we can assume that
such a path passes through any unoriented edge at most once and so there
are only finitely many of these paths to check.
\end{thm}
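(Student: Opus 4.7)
The plan is induction on the number of edges outside a fixed maximal spanning tree $T$ of $\Gamma$. The base case $\Gamma=T$ is Theorem \ref{hyptr}: a tree has no reduced closed edge paths of positive length, so complete conjugacy paths do not arise, and the tree statement is subsumed. At each inductive step I add one edge $e$ to the current subgraph $\Delta$ to form $\Delta'$, making the fundamental group an HNN extension $\langle G(\Delta),t\rangle$ with $tat^{-1}=b$ where $\langle a\rangle=G_{e^-}\leq G_v$ and $\langle b\rangle=G_{e^+}\leq G_w$. The inductive hypothesis I will carry is the full three-part statement used in the proof of Theorem \ref{hyptr}: existence of a complete conjugacy path or full non maximal path implies a Baumslag-Solitar subgroup in the fundamental group; the absence of both implies word hyperbolicity; and in the hyperbolic case, a vertex element maximal in its vertex group is non-maximal in the fundamental group if and only if some semi non maximal path ends at it.

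The key new ingredient is Proposition \ref{bfhn} for HNN extensions. Consider first a complete conjugacy path in $G(\Delta')$ that uses $e$. By Proposition \ref{redcmp} we may assume it uses $e$ exactly once, and by cycling that $e$ is its first edge; then unwrapping the conjugacy-path conditions along the remainder of the path in $\Delta$, together with completeness at $v$, produces an element of $G(\Delta)$ conjugating a power of $b$ to a power of $a$, so the first Bestvina-Feighn condition fails and Proposition \ref{bfhn} supplies a Baumslag-Solitar subgroup. Similarly, a full non maximal path in $G(\Delta')$ that uses $e$ has, by the arrow-constraints in its definition and Proposition \ref{redfll}, the edges other than the initial and final being arrow-free; according to whether $e$ is initial, final, interior or the whole path, the remaining edges split into at most two sub-paths in $\Delta$ terminating at $a$ or $b$ which are semi non maximal paths there. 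Via the inductive hypothesis, together with any arrow at the $v$ or $w$ end of $e$ itself, these force both $I_{G(\Delta)}(a)\neq\langle a\rangle$ and $I_{G(\Delta)}(b)\neq\langle b\rangle$, so the second Bestvina-Feighn condition fails and again Proposition \ref{bfhn} supplies a Baumslag-Solitar subgroup. A forbidden path in $G(\Delta')$ not using $e$ lies in $G(\Delta)$ and the inductive hypothesis handles it. Reversibility of these constructions then says: if $G(\Delta')$ has no forbidden path, by induction $G(\Delta)$ is word hyperbolic, both Bestvina-Feighn conditions hold, and $G(\Delta')$ is word hyperbolic by Proposition \ref{bfhn}.

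Assuming $G(\Delta')$ is word hyperbolic, I establish the maximality clause via Lemma \ref{mxlemhn}. The only ways a vertex element $g_u$ maximal in its vertex group can become non-maximal in $G(\Delta')$ are through case (i) (already non-maximal in $G(\Delta)$) or case (iv) (conjugate in $G(\Delta)$ to $a^{\pm 1}$ or $b^{\pm 1}$ with the other of $a,b$ non-maximal in $G(\Delta)$). The first case is immediate by induction. For the second, Theorem \ref{cnjgr} provides a reduced conjugacy path in $G(\Delta)$ from $g_u$ to, say, $a$, while the inductive hypothesis applied to the non-maximality of $b$ provides either an arrow on $e$ at $w$ or a semi non maximal path in $G(\Delta)$ ending at $b$. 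Assembling these through the edge $e$ produces the required semi non maximal path in $G(\Delta')$ ending at $g_u$; the converse direction analyses how such a path in $G(\Delta')$ uses or avoids $e$ and reduces to the inductive hypothesis.

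The main obstacle is the arrow-bookkeeping in this last assembly. One must confirm that the conjugacy path from $g_u$ to $a$ furnished by Theorem \ref{cnjgr} carries no labels along its length. This follows the scheme from Theorem \ref{hyptr}: an arrow encountered pointing against the direction of travel, concatenated with the rest of the path, $e$, and the semi non maximal contribution from the $b$-side, would produce a full non maximal path in $G(\Delta')$, contradicting its absence; an arrow pointing with the direction of travel reverses to a semi non maximal path in $G(\Delta)$ ending at $g_u$, contradicting maximality of $g_u$ in $G(\Delta)$ by the inductive hypothesis. Finiteness of the paths that need checking is then given directly by Propositions \ref{redcmp} and \ref{redfll}.
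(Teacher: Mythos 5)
Your proposal is correct and follows essentially the same route as the paper: induction over the edges outside a maximal tree with the same strengthened inductive statement (including the semi non maximal path characterisation of non-maximality), using Proposition \ref{bfhn} to translate the two Bestvina--Feighn failure conditions into complete conjugacy paths and full non maximal paths respectively, and the same arrow-bookkeeping to assemble the semi non maximal paths in the final clause.
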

\begin{proof}
This reduces to Theorem \ref{hyptr}
if $\Gamma$ is a tree as $G(\Gamma)$ cannot then contain a
reduced closed conjugacy path. Otherwise we take a maximal tree $T$
and we assume by induction on the
number of edges in $\Gamma\setminus T$ that:\\ 
\hfill\\
(i) If there exists a complete conjugacy path in $G(\Gamma)$
then $G$ contains a Baumslag-Solitar group and so is not word hyperbolic.\\
(ii) If there exists a full non maximal path in $G(\Gamma)$ then $G$ 
contains a Baumslag - Solitar group and so is not word hyperbolic.\\
(iii) If $G(\Gamma)$ contains no full non maximal paths and no complete
conjugacy paths then $G$ is word hyperbolic.\\
(iv) If $G$ is word hyperbolic
then a vertex element $g_v$ which is
maximal in its vertex group is non maximal in $G$ if and only if there
exists a semi non maximal path that ends at $g_v$.\\
\hfill\\
Thus we assume we have the graph of groups $H(\Delta)$ satisfying these
conditions and we add an edge $e$ to $\Delta$ to obtain $\Gamma$. The base
case where $\Gamma$ is a tree is covered by Theorem \ref{hyptr} (or
$\Gamma$ is a single point whereupon everything automatically holds).
In general we can assume that $H(\Delta)$ is word hyperbolic
because otherwise the inductive hypothesis applied to $H(\Delta)$ tells us
that $H$ and thus $G$ would contain a Baumslag - Solitar group. In particular
there are no complete conjugacy paths
or full non maximal paths in $H(\Delta)$. On adding the edge $e$ to
$H(\Delta)$ to obtain $G(\Gamma)$, where $G$ will be the HNN extension
$\langle H,t\rangle$ for $A=\langle a\rangle$ a cyclic subgroup of
$G_v$ and $B=\langle b\rangle$ of $G_w$, we suppose that 
$C=\langle c\rangle$ is the maximal cyclic subgroup of $G_v$ containing
$a$ and $D=\langle d\rangle$ that for $b$ in $G_w$.

First suppose that addition of $e$ creates a complete conjugacy
path in $G(\Gamma)$. This means that there must have been a reduced
conjugacy path from $a$ to $b$ lying in $H(\Delta)$, or possibly that
$a$ and $b$ lie in the same vertex group with powers conjugate in that
group, so our HNN extension
fails the ``Either'' condition for hyperbolicity in 
Proposition \ref{bfhn}. Hence
$G$ contains a Baumslag - Solitar subgroup and so
is not word hyperbolic (nor any group obtained from $G$ by further HNN
extensions). 
Alternatively if on addition of $e$ we find a full non maximal path
in $G(\Gamma)$, when there were none in $H(\Delta)$, then $e$ might be
labelled by arrows: if there are two arrows on $e$ then right away $G$
satisfies the ``Or'' condition in Proposition \ref{bfhn} and so contains
a Baumslag - Solitar group. If $e$ has one arrow then suppose that $e$  joins 
$a=g_v\in G_v$ to $b=g_w\in G_w$ (where possibly $v=w$) and $b$ is not
maximal in $G_w$ so our arrow on $e$ points towards $w$. But now by
starting at $v$ and then following the rest of our full non maximal path,
we have that the reverse of this is a semi non maximal path which must
lie purely within $H(\Delta)$ and so by (iv) applied to $H(\Delta)$
we have that $a\in G_v$ is not maximal in $H$ either and so $G$ contains
a Baumslag - Solitar group by Proposition \ref{bfhn}. Similarly if $e$ is
unlabelled then we can do the same in both directions from $v$ and from $w$.

For (iii), suppose that $G$ fails to be word hyperbolic because the
``Either'' condition holds in Proposition \ref{bfhn}. Then by Theorem
\ref{cnjgr} applied to $H(\Delta)$
we have that either $v=w$ and a conjugate within $G_v$
of $B$ meets $A$, in which case the loop $e$ itself is a complete
conjugacy path, or there is a conjugacy path from $a\in G_v$ to 
$b\in G_w$ which can be made reduced by Proposition \ref{red} and thus
is made complete by adding on $e$.

Thus now the ``Or'' condition in Proposition \ref{bfhn} is the only way
that $G$ can fail to be word hyperbolic. So
suppose that both $a$ and $b$ fail to be maximal
in $H$. It could be that $a$ and $b$ are not maximal in each of their
respective vertex groups, in which case $e$ is marked with two arrows and
itself immediately provides a full non maximal path in $G(\Gamma)$. 
Alternatively it
could be that $a$ (say) is maximal in $G_v$ but not in $H$ whereas
$b$ is not even maximal in $G_w$, whereupon we use the inductive 
hypothesis (iv) on $H(\Delta)$ to get us a semi non maximal path ending
in $a\in G_v$, reduced without loss of generality,
which results in a full non maximal path when putting $e$
on the end. If however both $a$ and $b$ are maximal in their respective
vertex groups then the edge $e$ is unlabelled and we have two reduced
semi non
maximal paths, each in $H(\Delta)$ with one ending in $a\in G_v$ and the
other in $b\in G_w$, thus by putting these together with $e$ in the
middle we again have a full non maximal path in $G(\Gamma)$.

Finally we must establish the inductive hypothesis (iv) for $G(\Gamma)$ by
determining
which are the ``new non maximal vertex elements''.
First say that both $a$ and $b$ are maximal in $H$ then we have 
by Lemma \ref{mxlemhn} that there are none. But no new semi non
maximal paths can be created using the edge $e$, which is
unlabelled, because following one from its start until we reach $e$
would produce a semi non maximal path in $H(\Delta)$ which ended in
either $a$ or $b$, thus by the inductive hypothesis one of these elements
would be non maximal in $H$.

So one of $a$ or $b$ is non maximal in $H$, but both being
non maximal puts us in the non word hyperbolic case, thus
we now suppose without loss of generality that $b$ is non
maximal in $H$ but $a$ is maximal. Then again by Lemma \ref{mxlemhn}
the new non maximal vertex elements
will be those that are conjugate in $H$ to $a^{\pm 1}\in G_v$, thus 
we can take our semi non maximal path in $H(\Delta)$ that ends in $b\in G_w$ 
and add the edge $e$ (or just take the edge $e$ if $b$ is not maximal in
$G_w$) to get one that ends in $a\in G_v$. If we further have maximal
elements in $H$ that are conjugate in $H$ but not in $G_v$ to $a$
(or to $a^{-1}$ in which case we replace every element in the path by its
inverse) then we have a conjugacy path from such an element to $a\in G_v$.
We can assume that no arrows appear on any edges in this path because
otherwise we either lose maximality of this element
in $H(\Delta)$ or we create
a full non maximal path in $G(\Gamma)$, just as in the proof of Theorem
\ref{hyptr}. Thus by first following the semi maximal path to $b$,
then the reverse of $e$ and finally the reverse of this conjugacy path
to $a$, we obtain our
semi non maximal path to any of these ``new non maximal
vertex elements''. Moreover this is the only way in which new 
semi non maximal paths can be created
when moving from $H(\Delta)$ to $G(\Gamma)$ because, once we assume such
a path only passes through each non oriented edge at most once,
it would have to travel through $e$ from $b$ to $a$ (else it would lie
in $H(\Delta)$ or the element $a$ would not be maximal in $H$), 
thus our induction is complete.
\end{proof}

\Address

\end{document}